\def\Moh{\mathscr{M}}
\newcommand{\eqnum}{\refstepcounter{equation}\textup{\tagform@{\theequation}}}
\theoremstyle{plain}
\newtheorem{theorem}{Theorem}[section]
\newtheorem{lemma}[theorem]{Lemma}
\newtheorem{proposition}[theorem]{Proposition}
\newtheorem{corollary}[theorem]{Corollary}
\theoremstyle{definition}
\newtheorem{definition}[theorem]{Definition}
\theoremstyle{remark}
\newtheorem{remark}[theorem]{Remark}
\newtheorem{example}[theorem]{Example}
\newtheorem{claim}{Claim}
\newcommand{\Pmap}{\mathbf{P}}
\title[Realization spaces of matroids over hyperfields]
      {Realization spaces of matroids over hyperfields}
\author[Emanuele Delucchi]{Emanuele Delucchi}
\address{(Emanuele Delucchi) Department of Mathematics, University of Fribourg, Chemin du Mus\'ee 23, CH-1700, Fribourg, CH.}
\email{emanuele.delucchi@unifr.ch}
\author[Linard Hoessly]{Linard Hoessly}
\address{(Linard Hoessly) Department of Mathematics, University of Fribourg, Chemin du Mus\'ee 23, CH-1700, Fribourg, CH.}
\email{linard.hoessly@unifr.ch}
\author[Elia Saini]{Elia Saini}
\address{(Elia Saini) Department of Mathematics, University of Fribourg, Chemin du Mus\'ee 23, CH-1700, Fribourg, CH.}
\email{elia.saini@unifr.ch}
\begin{document}

\maketitle

\begin{abstract}
    We study realization spaces of matroids over hyperfields (in the sense of Baker and Bowler \cite{BB16}). More precisely, given a matroid $M$ and a hyperfield $\mathbb H$ we determine the space of all
    $\mathbb H$-matroids over $M$.
This can be seen as the matroid stratum of the hyperfield Grassmannian  in the sense of Anderson-Davis \cite{AD17}.

    We give different descriptions of these realization spaces (e.g., in terms of Tutte groups or projective classes), allowing for explicit computations. When the hyperfield at hand is topological, the realization spaces have a natural topology. In this case, our models carry the correct homeomorphism type.
        
    As applications of our methods we obtain    a theorem on the existence of phased matroids that are not realizable over $\mathbb C$ and are not chirotopal, as well as a result on the diffeomorphism type of complex hyperplane arrangements whose underlying matroid is uniform.
 \end{abstract}

\section*{Introduction}
\emph{Matroids over hyperfields} were introduced by Baker and Bowler in \cite{BB16}. They unify several flavors of matroid theory, including oriented matroids \cite{Zie}, valuated matroids \cite{DW92a} and phased matroids \cite{AD12}. Accordingly, they have applications to different areas of mathematics such as tropical geometry, Berkovich theory and classical algebraic geometry \cite[\S 1]{BB16}. 

A matroid over a hyperfield $\mathbb H$ (for short: a $\mathbb H$-matroid) can be defined as a class of Grassmann-Pl\"ucker functions on a (finite) ground set $E$ with values in $\mathbb H$. Hyperfields are field-like objects where addition is allowed to be multivalued. An ordinary matroid corresponds to a matroid over the Krasner hyperfield, a matroid over the sign hyperfield corresponds to an oriented matroid and a matroid over the tropical hyperfield is a valuated matroid. (See Section \ref{section1b} for precise definitions and examples.)
Matroids over hyperfields admit the following ``functorial'' property: given a morphism of hyperfields $f:\mathbb{H}_1\to \mathbb{H}_2$ and an $\mathbb{H}_1$-Matroid $M$, there is an induced $\mathbb H_2$-Matroid $f_\star(M)$.
 The Krasner hyperfield is the terminal object in the category of hyperfields and, accordingly, we can define 
the {\em underlying matroid} of any $\mathbb{H}$-Matroid $M$ as the push-forward $k_\star(M)$ with respect to the unique map $k:\mathbb{H}\to \mathbb K$ (see also discussion after Theorem \ref{TheoremA}).
 In this paper we study the following question:
 \begin{center} 
\emph{What is the space of all matroids over a given hyperfield \\ with a fixed underlying matroid?}
 \end{center}
 We call such spaces {\em realization spaces}. In the case of the hyperfield of signs, these are the discrete, finite sets of oriented matroids studied by Gel'fand, Rybnikov and Stone \cite{GRS95}, who provided 
 different characterizations of them, up to a canonical operation on oriented
 matroids called \emph{reorientation}
 (compare \cite[\S 3.1 and Remark 3.2.3]{Zie},  \cite[p. 121]{GRS95}). 

In general, realization spaces of matroids over hyperfields are not finite; moreover, many hyperfields of interest carry a topological structure which induces a topology on the realization spaces. Our aim is then to model the homeomorphism type of such spaces.
 
 Generalizing the notion of reorientation 
 to the context of hyperfields, we consider the notion of \emph{rescaling class} of a matroid over a hyperfield and we give several descriptions of (the homeomorphism type of) the space of  rescaling classes of matroids over a fixed hyperfield and with a prescribed underlying matroid. Although our work is inspired by \cite{GRS95}, we will see that working in the generality of hyperfields and accounting for topology introduces new challenges. The reward is, then, a better structural understanding as well as a wider array of applications, of which we will outline a sample.
 
 \subsection*{Hyperfields}
 The idea of multivalued algebraic objects goes back at least to 1934, when Marty introduced the notion of hypergroups \cite{Mar34}.  In particular, in 1956 Krasner introduced hyperrings and hyperfields in order to develop some technical tools 
 in the study of approximations of valued fields, see \cite[pp.\ 139-140]{Kra56}. For the formal definition of a hyperfield see \S \ref{section1b} below.
 Ever since their first appearance, algebro-geometric properties of hyperrings have been investigated \cite{DS06,PR87}.
 In \cite{CC11}, Connes and Consani showed that Connes' ad\`{e}le class space of a global field has a hyperring structure, they investigated the connection between ``vectorspaces'' over the Krasner hyperfield and finite projective geometries, 
 and they began the study of multivalued algebraic geometry on hyperrings. 
 For a good overview and the connection to tropical geometry we refer to \cite{Vir10b}. 
 In \cite{Jun15a} and \cite{Jun17},
 Jaiung Jun further developed the theory of algebraic geometry over hyperrings by introducing integral hyperring schemes and used hyperrings in order to generalize the classical notion of valuations.

 \subsection*{Matroids over hyperfields}
Baker and Bowler presented several equivalent (or, in matroid theory parlance, ``cryptomorphic'') descriptions of matroids over hyperfields -- such as via
circuits, dual pairs and Grassmann-Pl\"ucker functions -- as well as a duality theory which depends on the choice of an involution of the hyperfield at hand. A special
feature of this theory is the distinction of two notions of $\mathbb H$-matroids,
namely strong and weak $\mathbb H$-matroids (see \S\ref{section1c} below). Anderson
contributed vector axioms in the strong case \cite{And16}.  For more details on definitions and
examples on matroids over hyperfields we refer to Section
\ref{section1b}.
Note that the follow-up paper of Baker and Bowler \cite{BB17} extends this theory to even more general algebraic structures (for further work in this vein see Pendavingh \cite{RP18}).

\subsection*{Grassmannians} When the hyperfield is a classical field, the space we aim at describing is known as the {\em matroid stratum} of the corresponding Grassmannian, or the  {\em realization space} of the given matroid over the field at hand, going back to \cite{GGMS}. In general, our spaces are related to the {\em hyperfield realization spaces} appearing in Anderson and Davis' work on hyperfield Grassmannians (see \cite{AD17} and Remark \ref{rem:Gdn}.(2)), where  the notion of a {\em topological hyperfield} has been introduced.

In the special case of the sign hyperfield we recover the results of  \cite{GRS95}.
 Moreover, specializing to the tropical hyperfield our work amounts to describing the space of projective equivalence classes of valuated matroids \cite{DW92a} with prescribed underlying matroid. This is a quotient of the matroid's {\em Dressian}, see \cite[\S 4.4]{MaSt}, hence the corresponding specialization of our results fits into the line of research studying the structure of Dressians, see \cite{HeJeJoSt,HeJoSt}. We do not pursue it here, but   we mention as a sample the question of whether one of our descriptions could improve on the upper bound on the dimension of the Dressian of uniform matroids given in \cite[Theorem 31]{JoSc}.

 \subsection*{Results}
Since our goal is to obtain descriptions for the space of all matroids over a given hyperfield $\mathbb H$ with a given underlying matroid $M$, we first verify that the different equivalent definitions of matroids over hyperfields 
 give rise to natural bijections (resp.\ homeomorphisms) between the corresponding spaces, allowing us to properly 
 define 
 \def\lemm{4.7em}
 \begin{enumerate}[label=$\mathcal R_{\mathbb H}(M)$, leftmargin=\lemm]
\item \label{GG0} ``the'' (topological) space of rescaling classes of $\mathbb H$-matroids with underlying matroid $M$. 
 \end{enumerate}

 We then extend the definitions of \cite{GRS95} introducing, in Section \ref{section3},
 
 \begin{enumerate}[label=$\mathcal P_{\mathbb H}(M)$, leftmargin=\lemm]
  \item \label{GG1} the space of \emph{hyperfield projective classes} of a matroid $M$, defined in terms of circuits and 
                   cocircuits of $M$.
 \end{enumerate}
 
 Other than in the oriented matroid case (see \cite[Theorem 1]{GRS95}), for matroids over general hyperfields 
 the space \ref{GG1} needs \emph{not} be in bijection with the space of rescaling classes. 
 In Section \ref{section3b} we characterize algebraically those hyperfields for which this one-to-one correspondence (which, in the topological case, is a homeomorphism) holds. We name the corresponding class of hyperfields \emph{WAM hyperfields} and show that the class of  non-WAM hyperfields is non-empty.

 Then we offer an alternative characterization of \ref{GG0} by proving, in Theorem \ref{big}  ,
 that this space is in bijection with  (and, in the topological case, homeomorphic to)
 
 \begin{enumerate}[label=$\mathcal H_{\mathbb H}(M)$, leftmargin=\lemm]
  \item \label{GG2} a subspace of the set of group homomorphisms from $\mathbb{T}_{M}^{(0)}$ 
                   to the multiplicative group $\mathbb{H^{\ast}}$. Here $\mathbb{T}_{M}^{(0)}$ denotes 
                    the inner Tutte group of $M$, that is a finitely generated abelian group introduced by Dress and Wenzel in 
                   \cite{DW89} and subsequent papers \cite{DW91,Wen89a, Wen89b} 
  as an algebraic counterpart of Tutte's homotopy theory \cite{Tut65}.
 \end{enumerate}
 
 Exploiting different presentations of the inner Tutte group we then prove (Theorem \ref{onetoone}) that 
 the spaces \ref{GG0} and \ref{GG2} are in bijection with (resp.\ homeomorphic to)  
 \begin{enumerate}[label=$\mathcal G_{\mathbb H}(M)$, leftmargin=\lemm]
  \item \label{GG3} the space of $\mathbb{H}^{\ast}$-\emph{cross-ratios}, and 
          \end{enumerate}
           \begin{enumerate}[label=$\mathcal G^R_{\mathbb H}(M)$, leftmargin=\lemm]
  \item \label{GG4} the space of \emph{reduced} $\mathbb{H}^{\ast}$ \emph{cross-ratios}, 
                   which affords easier geometric considerations, obtained by studying a presentation of the inner Tutte group that eliminates redundancies.
 \end{enumerate}

Geometric and algebraic properties of such spaces can be used to tackle specific problems. For example, working with \ref{GG4}, we derive an explicit characterization of rescaling classes as solution of systems of equations. This allows in Proposition \ref{finiteH} to give upper bounds on the number of weak matroids over finite hyperfields with underlying matroid $M$ in terms of circuits of $M$.

As a final structural result, with Theorem \ref{rescalingspaceembeddings} we prove that if $\mathbb H_1$ is a sub-hyperfield of $\mathbb H_2$ then the space of $\mathbb H_1$-rescaling classes over a fixed matroid $M$ embed into that of $\mathbb H_2$-rescaling classes.

 \subsection*{Applications} Our methods and results allow us to use topological and geometric techniques in order to obtain the following applications. 
 \begin{enumerate}[label=(\arabic{*})]
  \setcounter{enumi}{5}
 \item[--] There exist phased matroids that are neither realizable over $\mathbb C$ nor arising from the ``complexification'' of an oriented matroid (Theorem \ref{SuperMAIN}).
 \item[--] The diffeomorphism type of the complement manifold of any two arrangements of hyperplanes in complex space with uniform underlying matroid is determined by the underlying matroid itself  (Corollary \ref{cor:UHA}).
 \end{enumerate}

\subsection*{Plan}
In Section \ref{section1} we recall the basics of matroids over
hyperfields. Then we define rescaling classes of matroids over hyperfields in
Section \ref{section2}. In Section \ref{section3} we study projective
classes of matroids over hyperfields, we introduce the class of WAM hyperfields and we explain how these
particular hyperfields relate to the correspondence between projective
classes and rescaling classes.

In Section
\ref{section5} we give characterizations of spaces of rescaling  classes based on Tutte groups. (The definitions of those Tutte groups as well as other technical ingredients of the proofs are given in an Appendix.)   
Finally, in Section \ref{sec:App} we derive the stated applications to phased matroids and hyperplane arrangements. 

\subsection*{Remark on the ArXiv history}
The roots of this paper lie in the study of phasing spaces of matroid by the first and third author, 
see \cite{DS15}. While still appearing as a separate ArXiv entry, \cite{DS15} is now encompassed and superseded by the present work, which adopts the wider point of view of matroids over hyperfields. 
            
\subsection*{Acknowledgements}

We thank Laura Anderson, Christopher Eppolito, Jaiung Jun and Thomas Zavslasky 
for the warm hospitality and the very productive discussions during a visit at
SUNY Binghamton.
We also thank Alex Fink, Ivan Martino and Rudi Pendavingh for 
the opportunity to discuss an earlier version of our work. Moreover, 
we thank Richard Randell for feedback about Section \ref{Section9}, Peter Michor for advice regarding Lemma \ref{LEMLEMLEM}, Alberto Cavallo for pointing out \cite{Hir76}. We are also grateful to  Michael Joswig and Benjamin Schr\"oter for sharing their expertise on Dressians in friendly discussions during the 2018 special semester on tropical geometry at the Institute Mittag-Leffler. We are indebted to Nathan Bowler for pointing out an error in an earlier version of this paper.

All authors have been supported by the Swiss National Science Foundation Professorship grant PP00P2\_150552/1.

\section{Basics on matroids over hyperfields}\label{section1}

We recall 
some basic definitions and results about matroids and matroids over hyperfields. 
For a thorough treatment of matroid theory we point to Oxley's book \cite{Oxl92}, 
for basics on hyperfields we refer to Viro \cite{Vir10b} while the foundations of matroids over hyperfields are laid out in the 
preprint by Baker and Bowler \cite{BB16}.

\subsection{Matroids}\label{section1a}
A matroid $M$ is a pair $(E,\mathfrak{B})$, 
where $E$ is a finite set and $\mathfrak{B}\subseteq2^{E}$ 
is a collection of subsets of $E$ satisfying the following two conditions:
\begin{enumerate}[label=(B\arabic{*})]
 \item \label{B1} $\emptyset\neq\mathfrak{B}$;
 \item \label{B2} For all $B_1,B_2\in\mathfrak{B}$ and  $b_1\in B_1\setminus B_2$, there exists $b_2\in B_2\setminus B_1$ such that $(B_1\setminus\{b_1\})\cup\{b_2\}\in\mathfrak{B}$
\end{enumerate}
The set $E$ is called the ground set of $M$. The members of $\mathfrak{B}$ are the \emph{bases} of $M$.
The collection of subsets of elements of $\mathfrak{B}$ are the \emph{independent sets} of $M$. 
A subset of $E$ that is not independent is 
called \emph{dependent}. Minimal inclusion dependent sets are called \emph{circuits} and 
the family of circuits of $M$ will be denoted by $\mathfrak{C}$. 
We will write $E(M)$, $\mathfrak{B}(M)$ and $\mathfrak{C}(M)$ when we will need to specify which matroid we are considering.

The \emph{rank} of a subset $S\subseteq E$ is defined by
\begin{equation*}
 \operatorname{rk}(S)=\max\left\lbrace
                         |S\cap B|
                         \mid
                         B\in\mathfrak{B}
                        \right\rbrace
\end{equation*}
and we define the \emph{rank of the matroid} $M$ as $\operatorname{rk}(M):=\operatorname{rk}(E)$. 
A subset $S$ of $E$ is \emph{spanning} if $\operatorname{rk}(S)=\operatorname{rk}(M)$.

\begin{remark}[Cryptomorphisms]
Our definition in terms of bases can be replaced by 
a list of requirements for any of the set systems described by an italicized word above (and many more). 
This availability of different reformulations is a
distinctive feature of matroid theory. The rules allowing to switch between 
these reformulations are called ``cryptomorphisms''.
\end{remark}

\begin{remark}[Duality]
\def\rk{\operatorname{rk}}
The family of complements of bases of a matroid $M$ is the collection of bases of a matroid $M^{\ast}$ called 
\emph{dual} to $M$. The rank function $\rk^{\ast}$ of $M^{\ast}$ is related to that of $M$ 
via $\rk^{\ast}(A) = \rk(E\setminus A) + \vert A \vert - \rk(E)$.

Circuits of $M^{\ast}$ are called \emph{cocircuits} of $M$. 
We write $\mathfrak{C}^{\ast}(M)$ (or simply $\mathfrak{C}^{\ast}$ if no confusion can arise) for the family of cocircuits of $M$. 
\end{remark}

\begin{remark}[Representability]
A matroid is called representable if its ground set $E$ maps into a vector space $V$ so that a subset of $E$ is independent if and only if the corresponding vectors are linearly independent.
\end{remark}

\begin{example}[The Fano matroid] \label{ex:fano} 
The Fano matroid is defined on the ground set $E=\{1,2,\ldots,7\}$ 
by the circuit set
\begin{equation*}
 \mathfrak{C}=\left\lbrace
                \{1,2,3\},\{2,5,7\},\{1,4,7\},\{1,5,6\},\{3,4,5\},\{3,6,7\},\{2,4,6\}
              \right\rbrace.
\end{equation*}
It is representable over $\mathbb{F}$ if and only if the characteristic of $\mathbb F$ is two 
\cite[Proposition 6.4.8]{Oxl92}.
\end{example}

\begin{remark}[Minors]
Given a subset $T$ of the ground set $E$ of a matroid $M$, the collection of all inclusion maximal sets of the form $B\cap T$ where $B$ is a basis of $M$ satisfies the axioms (B1) and (B2). Thus, it is the collection of bases of a matroid, called \emph{restriction} of $M$ to $T$ and denoted by $M[T]$. 
The \emph{contraction} of $T$ in $M$ is the matroid $(M^{\ast}[E\setminus T])^{\ast}$. 
A \emph{minor} 
of $M$ is any matroid that can be obtained from $M$ through a sequence of restrictions and contractions.
\end{remark}

\begin{remark}[Connectivity]
Given matroids $M_{1}$ and $M_{2}$ with ground sets $E_{1}$ and $E_{2}$,  the \emph{direct sum} 
of $M_{1}$ and $M_{2}$ is the matroid $M_{1}\oplus M_{2}$ 
with ground set $E_{1}\cup E_{2}$ and bases 
\begin{equation*}
\mathfrak{B}(M_{1}\oplus M_{2})=
 \left\lbrace
 B_{1}\cup B_{2}\mid B_{1}\in\mathfrak{B}(M_{1})\text{ and }B_{2}\in\mathfrak{B}(M_{2})
 \right\rbrace.
\end{equation*}
\end{remark}

We say that $M$ is \emph{disconnected} if there exists a proper non-empty subset $T$ 
of the ground set $E$ such that $M=M[T]\oplus M[E\setminus T]$. We call $M$ \emph{connected} 
otherwise. A \emph{connected component} 
of $M$ is a maximal inclusion subset $T$ of $E$ such that $M[T]$ is connected. The decomposition of a matroid $M$ as a direct sum of connected 
matroids is unique (up to permutations), see e.g., \cite[Corollary 4.2.13]{Oxl92}. Therefore, the number $c_{M}$ of connected components of a matroid $M$ is well-defined.

\subsection{Hyperfields}\label{section1b}
Given a set $S$, a \emph{hyperoperation} $\boxplus$ on 
$S$ is a map from $S\times S$ to the collection of non-empty 
subsets of $S$. If $A$ and $B$ are non-empty subsets of $S$, we set 
\begin{equation*}
 A\boxplus B=\bigcup_{a\in A,b\in B}(a\boxplus b)
\end{equation*}
and we say that $\boxplus$ is \emph{commutative} if $a\boxplus b=b\boxplus a$ for all $a$, $b\in S$. 
We call $\boxplus$ \emph{associative} if $a\boxplus(b\boxplus c)=(a\boxplus b)\boxplus c$ for all $a$, $b$, $c\in S$. 

A commutative \emph{hypergroup} is a tuple $(G,\boxplus,0)$, where $G$ is a set and $\boxplus$ is a commutative and associative hyperoperation 
on $G$ such that
\begin{enumerate}[label=(H\arabic{*})]
  \item \label{H1} $0\boxplus x=\{x\}$ for all $x\in G$;
  \item \label{H2} For each $x\in G$ there is a unique element of $G$ 
                   (denoted by $-x$ and called the \emph{hyperinverse} of $x$) such that $0\in x\boxplus-x$;
  \item \label{H3} $x\in y\boxplus z$ if and only if $z\in x\boxplus-y$.
 \end{enumerate}

Given a commutative monoid $(R,\odot,1)$, an element $r\in R$ and a non-empty subset $A$ of $R$ we define
\begin{equation*}
 r\odot A=\{r\odot a\mid a\in A\}.
\end{equation*}

A commutative \emph{hyperring} is a tuple $(R,\boxplus,\odot,0,1)$ such that
\begin{enumerate}
 \item $(R,\boxplus,0)$ is a commutative hypergroup;
 \item $(R,\odot,1)$ is a commutative monoid;
 \item $0\odot x=x\odot0=0$ for all $x\in R$ (Absorption rule);
 \item $a\odot(x\boxplus y)=(a\odot x)\boxplus(a\odot y)$ for all $a$, $x$, $y\in R$ (Distributive law).
\end{enumerate}

\begin{definition}\label{dh}
 A \emph{hyperfield} is a commutative hyperring $(\mathbb{H},\boxplus,\odot,0,1)$ 
 such that $0\neq1$ and all non-zero 
 elements of $\mathbb{H}$ have an inverse with respect to $\odot$.    
\end{definition}

When no confusion arises, we denote a hyperfield by its underlying set $\mathbb{H}$ and we write 
$\mathbb{H}^{\ast}$ for the set of its non-zero elements. 
We will often denote $x\odot y^{-1}$ by $\frac{x}{y}$. Moreover, we will at times not distinguish between a singleton set and its unique element (e.g., writing axiom (H1) as $0\boxplus x = x$).

A \emph{sub-hyperfield} $\mathbb{H}_{1}$ of a hyperfield $\mathbb{H}_{2}$ is a subset 
$\mathbb{H}_{1}\subseteq\mathbb{H}_{2}$ 
that itself is a hyperfield with respect to the operations induced by $\mathbb{H}_{2}$.

A \emph{hyperfield homomorphism} is a map $f:\mathbb{H}_{1}\longrightarrow\mathbb{H}_{2}$ such that:
\begin{itemize}
 \item $f(0)=0$;
 \item $f(1)=1$;
 \item $f(x\boxplus y)\subseteq f(x)\boxplus f(y)$ for any $x$, $y\in\mathbb{H}_{1}$; 
 \item $f(x\odot y)=f(x)\odot f(y)$ for any $x$, $y\in\mathbb{H}_{1}$.
\end{itemize}

An \emph{involution} of the hyperfield $\mathbb{H}$ is a hyperfield homomorphism 
$\tau:\mathbb{H}\longrightarrow\mathbb{H}$ such that $\tau\circ\tau=\operatorname{id}_{\mathbb{H}}$.
According to this definition the identity map of $\mathbb{H}$ is an involution.

In the following statement we summarize for later reference some elementary algebraic properties of hyperfields. 
\begin{proposition}\label{algbas}
 For any hyperfield $\mathbb{H}$ with an involution $\tau$:
 \begin{enumerate}[label=(A\arabic{*})]
  \item \label{A1} $(-1)\odot f=f\odot(-1)=-f$ for all $f\in\mathbb{H}$;
  \item \label{A2} $\tau(-1)=-1$.
 \end{enumerate}
\end{proposition}

\begin{proof}
 Since $\mathbb{H}^{\ast}$ is an abelian group, we obviously have 
  $(-1)\odot f=f\odot(-1)$.
 Thus, for (A1) it suffices 
 to see that $f\odot(-1)=-f$ and this follows immediately from 
 \begin{equation*}
  0=0\odot f=f\odot 0\in f\odot\{0\}=f\odot(1\boxplus(-1))=(f\odot1)\boxplus(f\odot(-1))=f\boxplus(f\odot(-1)).
 \end{equation*}
 Similarly, to prove that $\tau(-1)=-1$ it is enough to compute
 \begin{equation*}
  0=\tau(0)\in\tau(\{0\})=\tau(1\boxplus(-1))\subseteq\tau(1)\boxplus\tau(-1)=1\boxplus\tau(-1).
 \end{equation*}
 By the uniqueness of inverses (H2), this implies (A2).
\end{proof}

The following notion of a {\em topological} hyperfield has been recently introduced by Anderson and Davis \cite{AD17}.

\begin{definition}\label{def:toph} A {\em topological hyperfield} is a hyperfield $(\mathbb H,\boxplus,\odot)$ with a topology on $\mathbb H$ such that $\mathbb H^*$ is open, the multiplication map $\odot: \mathbb H\times \mathbb H \to \mathbb H$ is continuous, and the multiplicative inverse map $(\cdot)^{-1}: \mathbb H^* \to \mathbb H^*$ is continuous.

A homomorphism of topological hyperfields is a hyperfield homomorphism that is continuous with respect to the given topology.  Accordingly, when talking about topological hyperfields we consider only continuous involutions. In particular, every involution of a topological hyperfield is a homeomorphism.
\end{definition}

\begin{example}\label{example1}
 We list here some relevant hyperfields. See for instance \cite{BB16,AD17,Vir10b}  for more details and examples (but be aware of the at times diverging notations).
 \begin{itemize}
    \setlength{\parskip}{0pt}
  \item Every field $(\mathbb F,+,\cdot)$ defines a hyperfield $(\mathbb F,\boxplus,\odot)$ with $x\boxplus y =\{x+y\}$ and $x\odot y = x \cdot y$.
  \item The \emph{Krasner hyperfield} \index{hyperfield!Krasner} 
        $\mathbb{K}$ is defined on the set $\{0,1\}$ with the usual multiplication rule 
        and hyperaddition law given by: 
        \begin{itemize}
         \item[$\bullet$] $0\boxplus x=x\boxplus0=\{x\}$ if $x=0$ or $x=1$;
         \item[$\bullet$] $1\boxplus1=\{0,1\}$.
        \end{itemize}
        The involution $\tau$ is the identity.
  \item The \emph{hyperfield of signs} \index{hyperfield!of signs}
        $\mathbb{S}$ is defined on the set $\{0,1,-1\}$ with the usual multiplication rule and 
        hyperaddition law given by setting: 
        \begin{itemize}
         \item[$\bullet$] $1\boxplus1=\{1\}$;
         $(-1)\boxplus(-1)=\{-1\}$;
         \item[$\bullet$] $x\boxplus0=0\boxplus x=\{x\}$;
         \item[$\bullet$] $1\boxplus(-1)=(-1)\boxplus1=\{0,1,-1\}$.
        \end{itemize}        
        The involution $\tau$ is the identity.
  \item The \emph{phase hyperfield} \index{hyperfield!phase} 
        $\mathbb{P}$ is defined on the set $S^{1}\cup\{0\}$, where $S^{1}$ is the complex unit circle, 
        with usual multiplication rule and hyperaddition law given by setting: 
        \[ x \boxplus y :=
  \begin{cases}
    x       & \quad \text{if } y=0 \\
        y       & \quad \text{if } x=0 \\
\{x,-x,0\}  & \quad  \text{if } x=-y\neq 0 \\
     \left\lbrace\left.\frac{\alpha x+\beta y}{\lVert\alpha x+\beta y\rVert}
                                         \right|\alpha,\beta\in\mathbb{R}_{>0}
                            \right\rbrace  & \quad \text{otherwise } \\

  \end{cases}
\]
        The involution $\tau$ is complex conjugation. Note that the name of this hyperfield is used differently in \cite{Vir10b}.
  \item The \emph{tropical hyperfield} \index{hyperfield!tropical} 
        $\mathbb{T}_{+}$ is defined on the set $\mathbb{R}\cup\{-\infty\}$ with multiplication rule 
        defined by $a\odot b=a+b$ (and $-\infty$ as absorbing element) and hyperaddition law given by setting:
         \[ x \boxplus y :=
  \begin{cases}
    \{\max\{x,y\}\}       & \quad \text{if } x\neq y \\
        \{c\in\mathbb{R}\cup\{-\infty\} \mid c\leq x\}       & \quad \text{if } x=y \\

  \end{cases}
\]
        The involution $\tau$ is the identity.
  \item The \emph{triangle hyperfield} \index{hyperfield!triangle} 
        $\mathbb{V}$ is defined on the set $\mathbb{R}_{\geq0}$ with usual multiplication rule and 
        hyperaddition law defined by setting: 
        \[ a\boxplus b=\{c\in\mathbb{R}_{\geq0}\mid |a-b|\leq c\leq a+b\}.\]
        
        The involution $\tau$ is the identity.
 \end{itemize} 
\end{example}

\subsection{Matroids over hyperfields}\label{section1c}
\begin{remark}
Throughout this work, we always assume that a hyperfield $\mathbb{H}$, an involution $\tau$ of $\mathbb{H}$
and a finite ground set $E:=\{1,\ldots,m\}$ are given.
\end{remark}

A \emph{hyperfield vector} is any $X\in\mathbb{H}^{E}$.
The \emph{support} of a hyperfield vector $X$ is the set
\begin{equation*}
 \operatorname{supp}(X):=\left\lbrace e\in E\mid X(e)\neq0\right\rbrace.
\end{equation*}

\begin{definition}[Orthogonal hyperfield vectors]
Two hyperfield vectors $X$ and $Y$ are \emph{orthogonal} 
with respect to $\tau$ (written $X\bot_{\tau}Y$) if 
       \begin{equation}\label{orco}
       0\in\mathop{\mathlarger{\mathlarger{\mathlarger{\boxplus}}}}_{e\in E}X(e)\odot\tau(Y(e)).
       \end{equation}
Two sets $\mathcal{X},\mathcal{Y}$ of hyperfield vectors are orthogonal with respect to $\tau$ --- written $\mathcal{X}\bot_{\tau}\mathcal{Y}$ ---
if $X\bot_{\tau}Y$ for all $X\in\mathcal{X}$ and $Y\in\mathcal{Y}$.
\end{definition}

As explained in \cite{BB16} there exist two different kinds of matroids over a hyperfield $\mathbb{H}$, called 
weak $\mathbb{H}\text{-matroids}$ and strong $\mathbb{H}\text{-matroids}$. We now provide definitions and we recall 
cryptomorphisms for both.

\begin{definition}[Grassmann--Pl\"{u}cker functions; {\cite[Definition 3.6]{BB16}}]\label{GPaxioms}
A rank $d$ \emph{weak Grassmann--Pl\"{u}cker function}
on $E$ with values in $\mathbb{H}$ is a non-zero alternating function 
$\varphi:E^{d}\longrightarrow\mathbb{H}$ whose support is the set of bases of a matroid and such that
\begin{equation*}
         0\in\mathop{\mathlarger{\mathlarger{\mathlarger{\boxplus}}}}_{k=1}^{d+1}
             (-1)^{k}\odot\varphi(x_{1},\ldots,\hat{x}_{k},\ldots,x_{d+1})\odot\varphi(x_{k},y_{1}\,\ldots,y_{d-1})
\end{equation*}
for any two subsets $I=\{x_{1},\ldots,x_{d+1}\}$ and $J=\{y_{1},\ldots,y_{d-1}\}$ of $E$ with $|I\setminus J|\leq3$.

A rank $d$ \emph{strong Grassmann--Pl\"{u}cker function}
on $E$ with values in $\mathbb{H}$ is a non-zero alternating function 
$\varphi:E^{d}\longrightarrow\mathbb{H}$ such that
\begin{equation*}
         0\in\mathop{\mathlarger{\mathlarger{\mathlarger{\boxplus}}}}_{k=1}^{d+1}
             (-1)^{k}\odot\varphi(x_{1},\ldots,\hat{x}_{k},\ldots,x_{d+1})\odot\varphi(x_{k},y_{1}\,\ldots,y_{d-1})
\end{equation*}
for any two subsets $I=\{x_{1},\ldots,x_{d+1}\}$ and $J=\{y_{1},\ldots,y_{d-1}\}$ of $E$.

We say that two weak (resp.\ strong) Grassmann--Pl\"{u}cker functions $\varphi_{1}$ and $\varphi_{2}$ are \emph{equivalent} (written $\varphi_{1}\sim_p\varphi_{2}$)
if $\varphi_{1}=a\odot\varphi_{2}$ for some $a\in\mathbb{H}^{\ast}$.
\end{definition}

The axioms given in Definition \ref{GPaxioms} ensure 
that the support of any weak (resp.\ strong) Grassmann--Pl\"{u}cker function
$\varphi$ is the set of bases of a matroid on $E$ which we call
$M_{\varphi}$. We then call a subset $I\subseteq E$  
\emph{$\varphi\text{-independent}$} if it is an independent set of the matroid $M_{\varphi}$. 
Accordingly, a \emph{$\varphi\text{-basis}$} is a maximal $\varphi\text{-independent}$
set.

In order to state weak (resp.\ strong) $\mathbb{H}\text{-circuits}$
axioms of weak (resp.\ strong) $\mathbb{H}\text{-matroids}$ 
we need at first to recall the notion of modular pair (resp. modular elimination structure).

As suggested by Baker and Bowler in \cite[Section 1.2]{BB16},  modular elimination can be interpreted as follows: if $X$ and $Y$ are hyperfield vectors that are ``sufficiently close'' and there exists an element $e\in E$ such that 
$X(e)=-Y(e)$, then it is possible to ``eliminate'' $e$ by (hyper-)adding $X$ and $Y$, i.e. there is $Z$ with $Z(e)=0$ and $Z(f)\in X(f)\boxplus Y(f)$ for all $f$.

To be more precise, given a family $\mathcal{C}\subseteq\mathbb{H}^{E}$ we say that $X$, $Y\in\mathcal{C}$ form 
a \emph{modular pair} 
if $\operatorname{supp}(X)$, $\operatorname{supp}(Y)$ is a modular pair in the lattice of unions of 
supports of elements of $\mathcal{C}$ \cite{Del11}. More generally (compare \cite[Definition 3.7]{BB16}), 
assume that we have a subset $P$ of $E$, an indexed family 
$(X_{p})_{p\in P}\subseteq\mathcal{C}$ with $\operatorname{supp}(X_{p})\cap P=\{p\}$, and $X\in\mathcal{C}$ with 
$X(p)=-X_{p}(p)$ for all $p\in P$ but $\operatorname{supp}(X)\nsubseteq\bigcup_{p\in P}\operatorname{supp}(X_{p})$. We say 
that $X$ and $(X_{p})_{p\in P}$ give a \emph{modular elimination structure} 
if the height of 
$\operatorname{supp}(X)\cup\bigcup_{p\in P}\operatorname{supp}(X_{p})$ 
in the lattice of unions of supports of elements of $\mathcal{C}$ is exactly $|P|+1$.

\begin{definition}[Weak $\mathbb{H}\text{-circuits}$; {\cite[Definition 3.4]{BB16}}]\label{WHCaxioms}
 A set $\mathcal{C}\subseteq\mathbb{H}^{E}$ is the set of \emph{weak $\mathbb{H}\text{-circuits}$} 
 of a 
 \emph{weak $\mathbb{H}\text{-matroid}$} $\mathcal{M}$ on $E$ if:
 \begin{enumerate}[label=(C\arabic{*})] 
  \item \label{CC0} $(0,\ldots,0)\notin\mathcal{C}$;
  \item \label{CC1} For all $X\in\mathcal{C}$ and all $\alpha\in\mathbb{H}^{\ast}$, $\alpha\odot X\in\mathcal{C}$;
  \item \label{CC2} For all $X,Y\in\mathcal{C}$ such that 
                    $\operatorname{supp}(X)=\operatorname{supp}(Y),$ $X=\alpha\odot Y$ for some $\alpha\in\mathbb{H}$;
  \item \label{CC3} [Weak modular elimination] 
                    For any modular pair $X$, $Y\in\mathcal{C}$ and for any 
                    $e\in E$ with $X(e)=-Y(e)\neq0$, there exists $Z\in\mathcal{C}$ such that $Z(e)=0$ and 
                    $Z(f)\in X(f)\boxplus Y(f)$ for all $f\in E$.
 \end{enumerate}
\end{definition}

\begin{definition}[Strong $\mathbb{H}\text{-circuits}$; {\cite[Definition 3.7]{BB16}}]\label{SHCaxioms}
 A set $\mathcal{C}\subseteq\mathbb{H}^{E}$
 is the set of \emph{strong $\mathbb{H}\text{-circuits}$}
 of a 
 \emph{strong $\mathbb{H}\text{-matroid}$} $\mathcal{M}$ 
 on $E$ if it satisfies \ref{CC0}, \ref{CC1}, \ref{CC2} as well as the 
 following stronger version of the modular elimination axiom \ref{CC3}:
 \begin{enumerate}
  \item[(C4)'] [Strong modular elimination] For any modular elimination structure given by $X\in\mathcal{C}$ and 
               $(X_{p})_{p\in P}\subseteq\mathcal{C}$, there is 
               $Z\in\mathcal{C}$ with $Z(p)=0$ for all $p\in P$ and $Z(f)\in X(f)\boxplus(\boxplus_{p\in P}X_{p}(f))$ for any 
               $f\in E$.
 \end{enumerate}
\end{definition}

If we take $|P|=1$ we immediately notice that (C4)' implies  \ref{CC3}. 
Therefore, a strong $\mathbb{H}\text{-matroid}$ on $E$ is also a weak $\mathbb{H}\text{-matroid}$ on $E$.

If $\mathcal{C}$ is the set of weak (resp.\ strong) $\mathbb{H}\text{-circuits}$ 
of a weak (resp.\ strong) $\mathbb{H}\text{-matroid}$ $\mathcal{M}$ on $E$, 
the set $\{\operatorname{supp}(X)\mid X\in\mathcal{C}\}$ is the set of 
circuits of a matroid $M_{\mathcal{C}}$. 
The rank of $\Moh$ 
is defined to be the rank of the matroid $M_{\mathcal{C}}$.
The following theorem asserts that Definition \ref{GPaxioms}
and Definition \ref{WHCaxioms} encode equivalent data.

\begin{theorem}[{\cite[Theorem 3.13, Theorem 3.17]{BB16}}]
 \label{TheoremA}
 Given a set $E$ and a hyperfield $\mathbb{H}$, there exists a bijection between the set of all equivalence classes of 
 rank $d$ weak (resp.\ strong) Grassmann--Pl\"{u}cker 
 functions on a $E$ with values in $\mathbb{H}$ and the set of all sets of weak (resp.\ strong) 
 $\mathbb{H}\text{-circuit}$ 
 of a rank $d$ weak (resp.\ strong) $\mathbb{H}\text{-matroid}$  on $E$, determined as follows. 
 For a weak (resp.\ strong) Grassmann--Pl\"{u}cker function $\varphi$ and the corresponding 
 set $\mathcal{C}$ of weak (resp.\ strong) $\mathbb{H}\text{-circuits}$:
 \begin{enumerate}
  \item The set of all supports of elements of $\mathcal{C}$ is the set of minimal non-empty $\varphi\text{-dependent}$ 
        sets;
  \item The 
   $\mathbb{H}\text{-circuits}$ $X\in\mathcal{C}$ are determined by the rule
        \begin{equation*}
         \frac{X(x_{i})}{X(x_{0})}=(-1)^{i}\odot
                                   \frac{\varphi(x_{0},x_{1},\ldots,\hat{x}_{k},\ldots,x_{d})}
                                        {\varphi(x_{1},\ldots,x_{d})}
        \end{equation*}
        for all $i=0,\ldots,k$ where $x_{0}\in\operatorname{supp}(X)$ and $\{x_{1},\ldots,x_{d}\}$ is any 
        $\varphi\text{-basis}$ containing $\operatorname{supp}(X)\setminus\{x_{0}\}$.
 \end{enumerate}
\end{theorem}

Thus, we can refer to the rank $d$ weak (resp.\ strong) matroid $\Moh$ over the hyperfield $\mathbb{H}$
(or, for short, $\mathbb{H}\text{-matroid}$) with ground set $E$, 
weak (resp.\ strong) Grassmann--Pl\"{u}cker function
$\varphi$ and weak (resp.\ strong) $\mathbb{H}\text{-circuits}$ $\mathcal{C}$. 
In particular, in this case $M_{\varphi}=M_{\mathcal{C}}$ -- we call this matroid the \emph{underlying matroid}  
of $\Moh$.

In the setting of matroids over hyperfields, duality depends 
on the choice of an involution of the hyperfield (see discussion after Definition \ref{dh}). To be more precise, for a $\mathbb{H}\text{-matroid}$ $\Moh$, any 
involution $\tau$ of $\mathbb{H}$ gives rise to a matroid $\Moh^{(\tau)}$ ``dual'' to $\Moh$ 
as explained in the following statement.

\begin{theorem}[{\cite[Theorem 3.20]{BB16}}]
 \label{Duality}
 Given a finite ground set $E$ with $|E|=m$, a hyperfield $\mathbb{H}$, an involution $\tau$ of $\mathbb{H}$ and a rank $d$ 
 weak (resp.\ strong) $\mathbb{H}\text{-matroid}$ $\Moh$ on $E$ with weak (resp.\ strong) 
 $\mathbb{H}\text{-circuits}$ $\mathcal{C}$ and 
 rank $d$ weak (resp.\ strong) Grassmann--Pl\"{u}cker function $\varphi$, there exists a rank $m-d$ weak (resp.\ strong) 
 $\mathbb{H}\text{-matroid}$ $\Moh^{(\tau)}$ 
 on $E$, called the \emph{dual} $\mathbb{H}\text{-matroid}$ of $\Moh$
 with respect to $\tau$,
 that satisfies the following properties.  \begin{enumerate}
  \item The set $\mathcal{C}^{(\tau)}$ of $\mathbb{H}\text{-circuits}$ of $\Moh^{(\tau)}$ are the elements of 
        $\operatorname{SuppMin}(\mathcal{C}^{\bot}\setminus\{0\})$, where $\operatorname{SuppMin}(S)$ denotes the elements of 
        $S$ of minimal support;
  \item A weak (resp.\ strong) Grassmann--Pl\"{u}cker function $\varphi^{(\tau)}$ for $\Moh^{(\tau)}$ is defined by the 
        formula
        \begin{equation*}
         \varphi^{(\tau)}(x_{1},\ldots,x_{m-d})=\operatorname{sign}(x_{1},\ldots,x_{m-d},x_{1}',\ldots,x_{r}')
                                                \odot
                                                \tau(\varphi(x_{1}',\ldots,x_{r}'))
        \end{equation*}
        where $x_{1}',\ldots,x_{r}'$ is any ordering of $E\setminus\{x_{1},\ldots,x_{m-d}\}$;
  \item The underlying matroid of $\Moh^{(\tau)}$ is the dual of that of $\Moh$;
  \item $(\Moh^{(\tau)})^{(\tau)}=\Moh$.
 \end{enumerate}
 Note that (2) implies in particular that $\Moh^{(\tau)}$ is uniquely determined.
\end{theorem}

The weak (resp.\ strong) $\mathbb{H}\text{-circuits}$ of $\Moh^{(\tau)}$ 
are called the weak (resp.\ strong) $\mathbb{H}\text{-cocircuits}$ 
of $\Moh$ with respect to $\tau$, and vice versa.

We now recall the definition of matroids over hyperfields in terms of ``dual pairs''. 

\begin{definition}[Dual pairs; {\cite[Definition 3.21, Definition 3.23]{BB16}}]
 \label{DualPairs}
 Let $M$ be a matroid with ground set $E$. We say that a collection $\mathcal{C}\subseteq\mathbb{H}^{E}$ 
 is a \emph{circuit coloring} of $M$(with values in $\mathbb{H}$) if:
 \begin{enumerate}[label=(DP\arabic{*})]
  \item \label{DP1} For all $X\in\mathcal{C}$ and all $\alpha\in\mathbb{H}^{\ast}$, $\alpha\odot X\in\mathcal{C}$;
  \item \label{DP2} For all $X,Y\in\mathcal{C}$ with $\operatorname{supp}(X)=\operatorname{supp}(Y),$ $X=\alpha\odot Y$ 
                   for some $\alpha\in\mathbb{H}^{\ast}$;
  \item \label{DP3} The set $\left\lbrace\operatorname{supp}(X)\mid X\in\mathcal{C}\right\rbrace$ is the set of circuits 
                   of $M$.
 \end{enumerate}
 
 We say that $\mathcal{D}\subseteq\mathbb{H}^{E}$ is a \emph{cocircuit coloring} of $M$ if $\mathcal{D}$ is a 
 circuit coloring of $M^{\ast}$, the dual matroid to $M$.
 Moreover, given a circuit coloring $\mathcal{C}$ and a cocircuit coloring $\mathcal{D}$ of $M$ 
 we say that $\mathcal{C}$, $\mathcal{D}$ form a \emph{weakly dual pair} with respect to $\tau$
 if $C\bot_{\tau}D$ for all $C\in\mathcal{C}$, $D\in\mathcal{D}$ with 
 $|\operatorname{supp}(C)\cap\operatorname{supp}(D)|\leq3$.
 Similarly, we say that $\mathcal{C},\mathcal{D}$ is a \emph{strongly dual pair} with respect to $\tau$ 
 if $C\bot_{\tau}D$ for all $C\in\mathcal{C}$, 
 $D\in\mathcal{D}$.
\end{definition}

\begin{remark}\label{rem:colsig}
We depart from the terminology of \cite{BB16} and choose the term {``coloring''} because we want to reserve the word ``signature'' for the objects defined at the beginning of Section \ref{section2b}. Informally, a coloring is a $\mathbb H^*$-orbit of signatures.
\end{remark}

\begin{theorem}[{\cite[Theorem 3.22, Theorem 3.23]{BB16}}]
 \label{TheoremC}
 Given a matroid $M$ with ground set $E$ and a hyperfield $\mathbb{H}$ with an involution $\tau$,
 let $\mathcal{C}$ be a circuit coloring and $\mathcal{D}$ be a cocircuit coloring of $M$.
 Then $\mathcal{C}$ and $\mathcal{D}$ are the set of weak (resp.\ strong) $\mathbb{H}\text{-circuits}$ and 
 $\mathbb{H}\text{-cocircuits}$ with respect to $\tau$ of a weak (resp.\ strong) 
 $\mathbb{H}\text{-matroid}$ $\Moh$ on $E$ with 
 underlying matroid $M$ if and only if they are a weak (resp.\ strong) dual pair with respect to $\tau$.
\end{theorem}

\begin{example}\label{example2}
 Matroids over the hyperfields listed in Example \ref{example1} are all well-studied combinatorial objects.
 In fact, matroids over hyperfields provide a common framework for several notions of matroids that appear in the 
 literature.
 \begin{itemize}
  \item A (weak or strong) matroid over the Krasner hyperfield $\mathbb{K}$ is the same as a matroid in the usual sense;
  \item A (weak or strong) matroid over the hyperfield of signs $\mathbb{S}$ is the same as an oriented matroid;
  \item A weak matroid over the phase hyperfield $\mathbb{P}$ is the same as the notion of complex matroid introduced by 
        Anderson and Delucchi in \cite[Definition 2.4]{AD12}. 
        Notice that in this context the standard duality theory is given by 
        taking the involution $\tau$ of the hyperfield $\mathbb{P}$ to be complex conjugation
        (compare \cite[Definition 2.12]{AD12}). As pointed out by Baker and Bowler in \cite[Appendix A]{BB16}, both notions 
        of weak (compare \cite[Definition 2.4]{AD12}) and strong 
        (compare \cite[Definition 2.3, Definition 2.15]{AD12}) matroids over the phase hyperfield $\mathbb{P}$ are introduced in \cite{AD12}, but 
        they are mistakenly asserted to be equivalent. However, the arguments in the proof of 
        \cite[Proposition 5.6]{AD12} still hold for the weak case;
  \item A (weak or strong) matroid over the tropical hyperfield $\mathbb{T}_{+}$ is the same as a valuated matroid in the 
        sense of Dress and Wenzel \cite{DW92a}.
 \end{itemize}
\end{example}

\begin{remark}\label{weakstrongequal}
 In the context of matroids and oriented matroids this dependence of the duality theory on the involution is 
 hidden, since the Krasner hyperfield $\mathbb{K}$ and the hyperfield of signs $\mathbb{S}$ 
 have the identity as unique involution.
\end{remark}

As pointed out by Baker and Bowler, the notions of weak and strong matroids over hyperfields do 
not agree in general. In particular, they provide in \cite[Section 3.10]{BB16} the following counterexamples:
\begin{itemize}
  \item A weak matroid over the triangle hyperfield $\mathbb{V}$ that is not a strong matroid over $\mathbb{V}$ 
        (compare \cite[Example 3.30]{BB16});
  \item A weak matroid over the phase hyperfield $\mathbb{P}$ that is not a strong matroid over $\mathbb{P}$ 
        (compare \cite[Example 3.31]{BB16}).
\end{itemize}

However, improving on some results of Dress and Wenzel in \cite{DW92b}, 
Baker and Bowler proved that for the special class of \emph{doubly distributive} 
hyperfields there is a coincidence between the concepts of weak and strong matroid over hyperfields \cite[Section 5]{BB16}.

\section{Rescaling classes of matroids over hyperfields}\label{section2}

Let $M$ be a rank $d$ matroid with ground set $E$ and let $\mathbb{H}$ be a given hyperfield.
We want to study the set of weak (resp.\ strong) $\mathbb{H}\text{-matroids}$ with underlying matroid $M$ and
the space of ``rescaling classes'' of such $\mathbb{H}\text{-matroids}$, 
as an analogue of \cite{DS15, GRS95} in the context of hyperfields.

The goal of this section is to give a precise definition of the ``matroid realization spaces'' we will be considering and to prove that such spaces are well-defined (up to homeomorphism in the topological case) as one switches between the cryptomorphic axiomatizations of matroids over hyperfields.

\subsection{Rescaling classes via Grassmann--Pl\"{u}cker functions}\label{section2a}

\begin{definition}
Let $\mathcal{N}_{\mathbb{H}}^{p,w}(M)$ 
(resp. $\mathcal{N}_{\mathbb{H}}^{p,s}(M)$) be
the set of 
weak (resp.\ strong) Grassmann--Pl\"{u}cker functions with underlying matroid $M$.
When $\mathbb{H}$ is given a topology, we topologize $\mathcal{N}_{\mathbb{H}}^{p,w}(M)$ 
and $\mathcal{N}_{\mathbb{H}}^{p,s}(M)$ as subsets of the product $(\mathbb{H})^{E^d}$.
\end{definition}

Recall the equivalence relation $\sim_{p}$  among weak (resp.\ strong) Grassmann--Pl\"{u}cker functions introduced in 
Definition \ref{GPaxioms}.

\begin{definition}\label{TypeBa}
A type $p$ weak (resp.\ strong) $\mathbb{H}\text{-matroid}$ 
 with underlying matroid $M$ is an
equivalence class of the relation $\sim_{p}$ on $\mathcal{N}_{\mathbb{H}}^{p,w}(M)$ 
(resp. $\mathcal{N}_{\mathbb{H}}^{p,s}(M)$). 
The space of type $p$ weak (resp.\ strong) $\mathbb{H}\text{-matroids}$ with underlying matroid $M$ is
$\mathcal{M}^{p,w}_{\mathbb{H}}(M):=\mathcal{N}_{\mathbb{H}}^{p,w}(M)/\sim_{p}$ 
(resp. $\mathcal{M}^{p,s}_{\mathbb{H}}(M):=\mathcal{N}_{\mathbb{H}}^{p,s}(M)/\sim_{p}$), endowed with the quotient topology.
\end{definition}

We now proceed to define the space of rescaling classes of weak (resp.\ strong) $\mathbb{H}\text{-matroids}$
defined in terms of weak (resp.\ strong) Grassmann--Pl\"{u}cker functions. 
\begin{definition}\label{def_GPssim}
Two weak (resp.\ strong) Grassmann--Pl\"{u}cker functions $\varphi_{1}$ and $\varphi_{2} \in
 \mathcal{N}_{\mathbb{H}}^{p,w}(M)$ (resp. $\mathcal{N}_{\mathbb{H}}^{p,s}(M)$) are called \emph{$\approx_{p}\text{-equivalent}$} 
 if there is a function $h:E\longrightarrow\mathbb{H}^{\ast}$
 such that, for all $(x_{1},\ldots,x_{d})\in E^{d}$,
 \begin{equation}
 \label{piropiro1}
  \varphi_{1}(x_{1},\ldots,x_{d})=\left(\bigodot_{j=1}^{d}h(x_{j})\right)\odot
  \varphi_{2}(x_{1},\ldots,x_{d}).
 \end{equation}
\end{definition}

A straightforward computation shows that $\approx_{p}$ is an
equivalence relation between weak (resp.\ strong) Grassmann--Pl\"{u}cker functions.

\begin{definition}
 Two type $p$ weak (resp.\ strong) $\mathbb{H}\text{-matroids}$ $\Phi_{1}$ and $\Phi_{2}$ with underlying matroid $M$ 
 are \emph{$\cong_{p}\text{-equivalent}$} 
 (denoted by $\Phi_{1}\cong_{p}\Phi_{2}$) if there exist 
 weak (resp.\ strong) Grassmann--Pl\"{u}cker functions 
 $\varphi_{1}\in\Phi_{1}$ and $\varphi_{2}\in\Phi_{2}$ such that 
 $\varphi_{1}\approx_{p}\varphi_{2}$. 
\end{definition}

We can now define a \emph{rescaling class}  as an equivalence classes of $\cong_{p}$.

\begin{definition}\label{TypeB}
 Given a matroid $M$ we define the space
 of rescaling classes of type $p$ weak (resp.\ strong) $\mathbb{H}\text{-matroids}$ 
 with underlying matroid $M$ as
 the set $\mathcal{R}^{p,w}_{\mathbb{H}}(M):=\mathcal{M}_{\mathbb{H}}^{p,w}(M)/\cong_{p}$
 (resp. $\mathcal{R}^{p,s}_{\mathbb{H}}(M):=\mathcal{M}_{\mathbb{H}}^{p,s}(M)/\cong_{p}$) 
 of $\cong_{p}\text{-equivalence}$ classes. Again, in the case of a topological hyperfield we endow the space of Grassmann-Plücker functions with the quotient topology.
\end{definition}

\subsection{Rescaling classes via hyperfield circuit and cocircuit signatures}\label{section2b}

A \emph{$\mathbb{H}^{\ast}\text{-circuit}$ signature} 
$\gamma$ of a matroid $M$ is a collection
$\{\gamma C\}_{C\in \mathfrak{C}}$ of functions  
$\gamma C:C\longrightarrow\mathbb{H}^{\ast}$, one for each circuit of $M$. 
In the same way, a \emph{$\mathbb{H}^{\ast}\text{-cocircuit}$ signature}
$\delta$ of a matroid $M$ is a set $\{\delta D\}_{D\in \mathfrak{C}^{\ast}}$ of functions 
$\delta D:D\longrightarrow\mathbb{H}^{\ast}$, one for 
each cocircuit of $M$. We say that a $\mathbb{H}^{\ast}\text{-circuit}$ signature $\gamma$ and a 
$\mathbb{H}^{\ast}\text{-cocircuit}$ signature $\delta$ are \emph{weak orthogonal} (resp. \emph{strong orthogonal}) 
with respect to $\tau$, denoted by $\gamma\bot_{\tau}\delta$, if, for any circuit $C\in\mathfrak{C}$ and 
cocircuit $D\in\mathfrak{C}^{\ast}$ with $|C\cap D|\leq3$ 
(resp. for any circuit and cocircuit), we have
\begin{equation}\label{Orientation}
    0\in\mathop{\mathlarger{\mathlarger{\mathlarger{\boxplus}}}}_{x\in C\cap D}\gamma C(x)\odot\tau(\delta D(x)).
\end{equation}

\begin{definition}\label{def25}
  We denote by
  $\mathcal{N}_{\mathbb{H}}^{\tau,w}(M)$ 
  (resp. $\mathcal{N}_{\mathbb{H}}^{\tau,s}(M)$) 
  the space of pairs $(\gamma,\delta)$ 
  of $\mathbb{H}^{\ast}\text{-circuit}$ and
  $\mathbb{H}^{\ast}\text{-cocircuit}$ signatures of $M$ that are weak (resp.\ strong) orthogonal 
  with respect to $\tau$. 
  This is a subset of the product $(\mathbb{H}^{\ast})^{\sum_{\mathfrak{C}}|C|\times\sum_{\mathfrak{C}^{\ast}}|D|}$ and we topologize it as such, in the case where $\mathbb{H}$ is a topological hyperfield.
\end{definition}

\begin{definition}\label{sim}
  Two pairs $(\gamma_{1},\delta_{1})$ and $(\gamma_{2},\delta_{2})$ of
  $\mathbb{H}^{\ast}\text{-circuit}$ and $\mathbb{H}^{\ast}\text{-cocircuit}$ signatures of $M$ 
  that are weak (resp.\ strong) orthogonal 
  with respect to $\tau$
  are called \emph{$\sim_{\tau}\text{-equivalent}$} 
  (denoted by $(\gamma_{1},\delta_{1})\sim_{\tau}(\gamma_{2},\delta_{2})$) if there exist functions
  $b:\mathfrak{C}\longrightarrow\mathbb{H}^{\ast}$, $C\mapsto b_{C}$, and 
  $l:\mathfrak{C}^{\ast}\longrightarrow\mathbb{H}^{\ast}$, $D\mapsto l_{D}$, such that:
  \begin{itemize}
   \item $\gamma_{1}C(x)=b_{C}\odot\gamma_{2}C(x)$ for any circuit $C\in\mathfrak{C}$ and any $x\in C$;
   \item $\delta_{1}D(y)=l_{D}\odot\delta_{2}D(y)$ for any cocircuit $D\in\mathfrak{C}^{\ast}$ and any $y\in D$.
  \end{itemize}
\end{definition}

One readily verifies that $\sim_{\tau}$ is an equivalence relation on the set 
$\mathcal{N}_{\mathbb{H}}^{\tau,w}(M)$ 
(resp. $\mathcal{N}_{\mathbb{H}}^{\tau,s}(M)$).

\begin{proposition}\label{laborious}
 The function that associates to a pair $(\gamma,\delta)\in\mathcal{N}_{\mathbb{H}}^{\tau,w}(M)$ 
 (resp. $\mathcal{N}_{\mathbb{H}}^{\tau,s}(M)$) the circuit coloring\footnote{See Definiiton \ref{DualPairs} and Remark \ref{rem:colsig}.} 
 \begin{equation*}
  \mathcal{C}_{(\gamma,\delta)}=\left\lbrace 
                                 X\in\mathbb{H}^{E}
                                 \left|
                                  \exists C\in\mathfrak{C},\exists a\in\mathbb{H}^{\ast}\text{ with }
                                  X(j)=\left\lbrace
                                  \begin{array}{lll}
                                     0                 & \text{if} & j\notin C \\
                                     a\odot\gamma C(j) & \text{if} & j\in C \\
                                  \end{array}
                                 \right.
                                \right.
                               \right\rbrace
 \end{equation*}
induces a bijection 
between the quotient set $\mathcal{N}_{\mathbb{H}}^{\tau,w}(M)/\sim_{\tau}$ 
(resp. $\mathcal{N}_{\mathbb{H}}^{\tau,s}(M)/\sim_{\tau}$) and the family of all sets of 
circuits of weak (resp.\ strong) $\mathbb{H}\text{-matroids}$
with underlying matroid $M$. 
\end{proposition}

We postpone the proof of this proposition in order to continue with the definitions we are now able to state.

\begin{definition}[See Definition \ref{DualPairs}]\label{TypeCa}
 A type $\tau$ weak (resp.\ strong) $\mathbb{H}\text{-matroid}$ 
 is an equivalence class of the
 relation $\sim_{\tau}$ on the set $\mathcal{N}^{\tau,w}_{\mathbb{H}}(M)$
 (resp. $\mathcal{N}_{\mathbb{H}}^{\tau,s}(M)$). 

 \noindent
 The space of type $\tau$ weak (resp.\ strong) $\mathbb{H}\text{-matroids}$ 
 with underlying matroid $M$ is the set 
 $\mathcal{M}^{\tau,w}_{\mathbb{H}}(M):=\mathcal{N}^{\tau,w}_{\mathbb{H}}(M)/\sim_{\tau}$ 
 (resp. $\mathcal{M}^{\tau,s}_{\mathbb{H}}(M):=\mathcal{N}^{\tau,s}_{\mathbb{H}}(M)/\sim_{\tau}$). Again, in the topological case we endow these spaces with the quotient topology.
\end{definition}

We now define an equivalence relation on the set 
of type $\tau$ weak (resp.\ strong) $\mathbb{H}\text{-matroids}$ with underlying matroid $M$, in order
to obtain the counterpart of Definition \ref{TypeB}.

\begin{definition}\label{def_ssim}
 Two pairs $(\gamma_{1},\delta_{1})$ and $(\gamma_{2},\delta_{2})$ of
 $\mathbb{H}^{\ast}\text{-circuit}$ and  $\mathbb{H}^{\ast}\text{-cocircuit}$ signatures of $M$ 
 that are weak (resp.\ strong) orthogonal with respect to $\tau$ 
 are called \emph{$\approx_{\tau}\text{-equivalent}$} 
 (denoted $(\gamma_{1},\delta_{1})\approx_{\tau}(\gamma_{2},\delta_{2})$) 
 if there exists a function $h:E\longrightarrow\mathbb{H}^{\ast}$
 such that:
 
 \begin{itemize}
  \item $\gamma_{1}C(x)=h(x)\odot\gamma_{2}C(x)$ for any circuit $C\in\mathfrak{C}$ and any $x\in C$;
  \item   $\delta_{1}D(y)=\tau(h^{-1}(x))\odot\delta_{2}D(y)$ for any cocircuit $D\in\mathfrak{C}^{\ast}$ and any $y\in D$.
 \end{itemize}

 Here $h^{-1}(x)$ stands for the inverse of $h(x)$ in the multiplicative group $\mathbb{H}^{\ast}$.\todo{perhaps change notation for inverse}
\end{definition}

It is easy to see that $\approx_{\tau}$ is an equivalence relation
on $\mathcal{N}_{\mathbb{H}}^{\tau,w}(M)$ (resp. $\mathcal{N}_{\mathbb{H}}^{\tau,s}(M)$).

\begin{definition}\label{cong}
 Two type $\tau$ weak (resp.\ strong) $\mathbb{H}\text{-matroids}$ $\Gamma_{1}$ and $\Gamma_{2}$ 
 with underlying matroid $M$ 
 are \emph{$\cong_{\tau}\text{-equivalent}$} 
 (denoted by $\Gamma_{1}\cong_{\tau}\Gamma_{2}$) if there are
 $(\gamma_{1},\delta_{1})\in\Gamma_{1}$ and $(\gamma_{2},\delta_{2})\in\Gamma_{2}$ such that
 $(\gamma_{1},\delta_{1})\approx_{\tau}(\gamma_{2},\delta_{2})$.
\end{definition}

Again, $\cong_{\tau}$ is obviously an equivalence relation. We conclude  with the definition of the space of rescaling classes of 
type $\tau$ weak (resp.\ strong) $\mathbb{H}\text{-matroids}$ with underlying matroid $M$.

\begin{definition}\label{TypeC}
 The space of 
 rescaling classes 
 of type $\tau$ weak (resp.\ strong)
 $\mathbb{H}\text{-matroids}$ with underlying matroid $M$
 is the set 
 $\mathcal{R}_{\mathbb{H}}^{\tau,w}(M):=\mathcal{M}^{\tau,w}_{\mathbb{H}}(M)/\cong_{\tau}$
 (resp. $\mathcal{R}_{\mathbb{H}}^{\tau,s}(M):=\mathcal{M}^{\tau,s}_{\mathbb{H}}(M)/\cong_{\tau}$), which we endow with the quotient topology if $\mathbb{H}$ is a topological hyperfield.
\end{definition}

\begin{remark}\label{noinvolution1}
 If $\tau_{1}$ and $\tau_{2}$ are involutions of the hyperfield $\mathbb{H}$, there is a natural bijection between the sets 
 $\mathcal{N}_{\mathbb{H}}^{\tau_{1},w}(M)$ and $\mathcal{N}_{\mathbb{H}}^{\tau_{2},w}(M)$ via the map 
 \begin{equation*}
  f_{\tau_{1},\tau_{2}}:\mathcal{N}_{\mathbb{H}}^{\tau_{1},w}(M)\longrightarrow
                        \mathcal{N}_{\mathbb{H}}^{\tau_{2},w}(M)
 \end{equation*}
 that associates to a pair $(\gamma,\delta)$ of $\mathcal{N}_{\mathbb{H}}^{\tau_{1},w}(M)$ the pair 
 $(\tilde{\gamma},\tilde{\delta})$ of $\mathcal{N}_{\mathbb{H}}^{\tau_{2},w}(M)$ defined by 
 \begin{itemize}
  \item $\tilde{\gamma}C(x)=\gamma C(x)$ for any $C\in\mathfrak{C}$ and for any $x\in C$;
  \item $\tilde{\delta}D(y)=\tau_{2}\circ\tau_{1}(\delta D(y))$ for any $D\in\mathfrak{C}^{\ast}$ and for any $y\in D$.
 \end{itemize}
 Moreover, a straightforward check of definitions shows that $f_{\tau_1,\tau_2}$ passes to the quotients and hence induces bijections $F_{\tau_{1},\tau_{2}}$ and $\overline{F}_{\tau_{1},\tau_{2}}$ as in the following commutative diagram.
 \begin{equation*}
  \begin{tikzpicture}[y=2em]
         \node (E) at (0,4) {$\mathcal{N}^{\tau_{1},w}_{\mathbb{H}}(M)$};
         \node (F) at (6,4) {$\mathcal{N}^{\tau_{2},w}_{\mathbb{H}}(M)$};
         \node (A) at (0,2) {$\mathcal{M}^{\tau_{1},w}_{\mathbb{H}}(M)$};
         \node (B) at (6,2) {$\mathcal{M}^{\tau_{2},w}_{\mathbb{H}}(M)$};
         \node (C) at (0,0) {$\mathcal{R}^{\tau_{1},w}_{\mathbb{H}}(M)$};
         \node (D) at (6,0) {$\mathcal{R}^{\tau_{2},w}_{\mathbb{H}}(M)$};
         \draw[<->] (E) -- (F);
         \draw[<->] (A) -- (B);
         \draw[<->] (C) -- (D);
         \draw[->]  (E) -- (A);
         \draw[->]  (F) -- (B);
         \draw[->]  (A) -- (C);
         \draw[->]  (B) -- (D);
         \coordinate [label=left: {$\sim_{\tau_{1}}$}]           (a) at (0,3);
         \coordinate [label=right:{$\sim_{\tau_{2}}$}]           (b) at (6,3);         
         \coordinate [label=left: {$\cong_{\tau_{1}}$}]          (a) at (0,1);
         \coordinate [label=right:{$\cong_{\tau_{2}}$}]          (b) at (6,1);
         \coordinate [label=above:{$\overline{F}_{\tau_{1},\tau_{2}}$}] (c) at (3,0);
         \coordinate [label=above:{$F_{\tau_{1},\tau_{2}}$}]            (d) at (3,2);
         \coordinate [label=above:{$f_{\tau_{1},\tau_{2}}$}]            (d) at (3,4);
  \end{tikzpicture}
 \end{equation*}
The same holds in the strong case.
\end{remark}

\begin{remark}
If $\mathbb H$ is a topological hyperfield (see Definition \ref{def:toph}), the function $f_{\tau_1,\tau_2}$ of Remark \ref{noinvolution2} is indeed a homeomorphism and -- since the other spaces carry the quotient topology -- so are the induced maps $F_{\tau_1,\tau_2}$ and $\overline{F}_{\tau_1,\tau_2}$.
\end{remark}

\begin{proof}[Proof of Proposition \ref{laborious}]
 We prove the statement for weak $\mathbb H$-matroids. The strong case can be treated analogously. The proof has three steps: we first show that the function 
$(\gamma,\delta)\mapsto \mathcal C_{(\gamma,\delta)}$
 is well-defined from $\mathcal N_{\mathbb H} ^{\tau,w}$ to the set of weak $\mathbb H^*$-circuit sets with underlying matroid $M$. Then, we show that this function is surjective and that it factors through the equivalence relation $\sim_\tau$.
 
 \noindent{\em  (i) The function $(\gamma,\delta)\mapsto \mathcal C_{(\gamma,\delta)}$ is well-defined.
 }
Let $(\gamma,\delta)\in\mathcal{N}_{\mathbb{H}}^{\tau,w}(M)$ and consider the set 
 \begin{equation*}\label{cocircuitfrompairs}
  \mathcal{D}_{(\gamma,\delta)}=\left\lbrace 
                                 Y\in\mathbb{H}^{E}
                                 \left|
                                  \exists D\in\mathfrak{C}^{\ast},\exists b\in\mathbb{H}^{\ast}\text{ with }
                                  Y(j)=\left\lbrace
                                  \begin{array}{lll}
                                     0                 & \text{if} & j\notin D \\
                                     b\odot\delta D(j) & \text{if} & j\in D \\
                                  \end{array}
                                 \right.
                                \right.
                               \right\rbrace.
 \end{equation*}
 By Theorem \ref{TheoremC}, it is enough to prove $\mathcal C_{(\gamma,\delta)}\perp_\tau\mathcal D_{(\gamma,\delta)}$, which we now do. Consider $X\in \mathcal C_{(\gamma,\delta)}$ and $Y\in \mathcal D_{(\gamma,\delta)}$. By definition, there is a circuit $C\in\mathfrak{C}$ and a cocircuit $D\in\mathfrak{C}^{\ast}$ as well as  elements $a$, $b\in\mathbb{H}^{\ast}$ such that:
\begin{equation}\label{eq:ort}
\mathop{\mathlarger{\mathlarger{\mathlarger{\boxplus}}}}_{i\in E}X(i)\odot \tau(Y(i))
= a\odot\tau(b)\mathop{\mathlarger{\mathlarger{\mathlarger{\boxplus}}}}_{i\in C\cap D}\gamma C(i)\odot\tau(\delta D(i))
\end{equation}
(using the distributive law). Since $a,b\neq 0$, this implies that 
$X\bot_{\tau}Y$ if and only if $\gamma C\perp_\tau \delta D$ -- but the latter is guaranteed because $(\gamma,\delta)$ is in fact a dual pair.

\noindent{\em (ii) The function $(\gamma,\delta)\mapsto \mathcal C_{(\gamma,\delta)}$ is surjective.} 

 Let us denote by $\mathcal{C}$ and $\mathcal{D}$ the set of 
 weak $\mathbb{H}\text{-circuits}$, resp.\ weak $\mathbb H$-cocircuits, of a weak $\mathbb H$-matroid $\mathcal{M}$ on the ground set $E$ with respect to an involution $\tau$. For every circuit $C$ of the underlying matroid of $\mathcal M$ choose an element $i_C\in C$, and 
 consider the function 
 \begin{equation*}
  \gamma_{(\mathcal{C},\mathcal{D})}C:C\longrightarrow\mathbb{H}^{\ast},
 \quad\quad   i\mapsto\frac{X(i)}{X(i_{C})}
 \end{equation*}
 where $X\in\mathcal{C}$ is any weak $\mathbb{H}\text{-circuit}$ of $\mathcal{M}$ such that 
 $\operatorname{supp}(X)=C$ (this is well-defined by Axiom \ref{CC3}).  
 In the same way, choose an element $j_D\in D$ for every cocircuit $D$ of the underlying matroid and consider the function 
  \begin{equation*}
  \delta_{(\mathcal{C},\mathcal{D})}D:D\longrightarrow\mathbb{H}^{\ast},
\quad\quad
  j\mapsto\frac{Y(j)}{Y(j_{D})},
 \end{equation*}
 where $Y\in\mathcal{D}$ is any weak $\mathbb{H}\text{-cocircuit}$ of $\mathcal{M}$ with respect to $\tau$ such that  
 $\operatorname{supp}(Y)=D$.
 The same computation as in Equation \eqref{eq:ort} above, using distributivity, shows that since $\mathcal{C}$, $\mathcal{D}$ is a weakly dual pair with respect to $\tau$, the pair 
 $(\gamma_{(\mathcal{C},\mathcal{D})},\delta_{(\mathcal{C},\mathcal{D})})$ belongs to $\mathcal{N}_{\mathbb{H}}^{\tau,w}(M)$.
 Since, by definition, $\mathcal{C}=\mathcal{C}_{(\gamma_{(\mathcal{C},\mathcal{D})},\delta_{(\mathcal{C},\mathcal{D})})}$, we conclude that the map 
 $(\gamma,\delta)\mapsto\mathcal{C}_{(\gamma,\delta)}$ is  surjective.

 \noindent{\em (iii) We have $ (\gamma_{1},\delta_{1})\sim_{\tau}(\gamma_{2},\delta_{2})$ if and only if $
  \mathcal{C}_{(\gamma_{1},\delta_{1})}=\mathcal{C}_{(\gamma_{2},\delta_{2})}.$}
 The left-to-right implication is an immediate consequence of Definition \ref{sim}. 
 For the right-to-left implication, let $(\gamma_{1},\delta_{1}),(\gamma_{2},\delta_{2})\in\mathcal{N}_{\mathbb{H}}^{\tau,w}(M)$  such that
 $\mathcal{C}_{(\gamma_{1},\delta_{1})}=\mathcal{C}_{(\gamma_{2},\delta_{2})}$. 
 By definition, this means that for every circuit $C\in\mathfrak{C}$ 
 there exists $b_{C}\in\mathbb{H}^{\ast}$ with $\gamma_{1}C(x)=b_{C}\odot\gamma_{2}C(x)$ for all $x\in C$.

 On the other hand, let $\mathcal{D}_{(\gamma_{1},\delta_{1})}$ and $\mathcal{D}_{(\gamma_{2},\delta_{2})}$ be the sets defined 
 in (i) above. Since $(\gamma_{1},\delta_{1})$ and $(\gamma_{2},\delta_{2})$ belong to 
 $\mathcal{N}_{\mathbb{H}}^{\tau,w}(M)$ in the same way as in  (i) above we obtain that 
 $\mathcal{C}_{(\gamma_{1},\delta_{1})}$, $\mathcal{D}_{(\gamma_{1},\delta_{1})}$ and 
 $\mathcal{C}_{(\gamma_{2},\delta_{2})}$, $\mathcal{D}_{(\gamma_{2},\delta_{2})}$ are dual pairs with respect to $\tau$.
 From $\mathcal{C}_{(\gamma_{1},\delta_{1})}=\mathcal{C}_{(\gamma_{2},\delta_{2})}$ we deduce that 
 $\mathcal{D}_{(\gamma_{1},\delta_{1})}=\mathcal{D}_{(\gamma_{2},\delta_{2})}$. 
Unwrapping the definitions, we then see that for any cocircuit 
 $D\in\mathfrak{C}^{\ast}$ there exists $l_{D}\in\mathfrak{C}^{\ast}$ with $\delta_{1}D(y)=l_{D}\odot\delta_{2}D(y)$ for all 
 $y\in D$. This concludes the proof.
\end{proof}

\subsection{From cryptomorphisms to a one-to-one correspondence}\label{section2c}

In this section we show that the cryptomorphisms of 
Theorem \ref{TheoremA} and Theorem \ref{TheoremC} induce natural correspondences between the spaces of $\mathbb H$-matroids, resp.\ of rescaling classes.

\begin{proposition}\label{common}
 Given a matroid $M$ and a hyperfield $\mathbb{H}$ with an involution $\tau$ there exist bijections $\Upsilon_{\tau}$ and $\overline{\Upsilon_{\tau}}$ that fit into the following commutative square.
 \begin{equation}
    \label{FirstDiagram}
        \begin{tikzpicture}[y=2em]
         \node (A) at (0,2) {$\mathcal{M}^{\tau,w}_{\mathbb{H}}(M)$};
         \node (B) at (3,2) {$\mathcal{M}^{p,w}_{\mathbb{H}}(M)$};
         \node (C) at (0,0) {$\mathcal{R}^{\tau,w}_{\mathbb{H}}(M)$};
         \node (D) at (3,0) {$\mathcal{R}^{p,w}_{\mathbb{H}}(M)$};
         \draw[->] (A) -- (B);
         \draw[->] (A) -- (C);
         \draw[->] (C) -- (D);
         \draw[->] (B) -- (D);
         \coordinate [label=left: {$\cong_{\tau}$}]        (a) at (0,1);
         \coordinate [label=right:{$\cong_{p}$}]                  (b) at (3,1);
         \coordinate [label=above:{$\overline{\Upsilon}_{\tau}$}] (c) at (1.5,0);
         \coordinate [label=above:{$\Upsilon_{\tau}$}]            (d) at (1.5,2);
   \end{tikzpicture}
       \end{equation}
   The same holds for the strong case. In both cases, if $\mathbb{H}$ is a topological hyperfield, then ${\Upsilon}_{\tau}$ and $\overline{\Upsilon}_{\tau}$ are homeomorphisms.
\end{proposition}

We postpone the proof of this proposition in order to conclude our train of thoughts.

\begin{remark}[Compare Remark \ref{noinvolution1}]\label{noinvolution2}
 The spaces $\mathcal{M}^{p,w}_{\mathbb{H}}(M)$ and $\mathcal{R}^{p,w}_{\mathbb{H}}(M)$ 
do not depend on the involution $\tau$.
Thus, Proposition \ref{common} implies that, given involutions $\tau_{1}$ and $\tau_{2}$ of the hyperfield $\mathbb{H}$, the 
spaces $\mathcal{M}^{\tau_{1},w}_{\mathbb{H}}(M)$ and $\mathcal{M}^{\tau_{2},w}_{\mathbb{H}}(M)$, as well as 
$\mathcal{R}^{\tau_{1},w}_{\mathbb{H}}(M)$ and $\mathcal{R}^{\tau_{2},w}_{\mathbb{H}}(M)$,
are in one-to-one correspondence -- and in the case of topological hyperfields these bijections are homeomorphisms. The same holds for the strong case.
\end{remark}

In summary, Proposition \ref{common} allows us to freely switch (up to homeomorphism) between the point of
view of Grassmann--Pl\"{u}cker functions
and that of $\mathbb{H}\text{-circuit}$ and $\mathbb{H}\text{-cocircuit}$ signatures. 
We are now able to define the spaces we will study.

\begin{definition}\label{def:MRhyperfield}
The space $\mathcal{M}_{\mathbb{H}}^{w}(M)$ 
of weak $\mathbb{H}\text{-matroids}$ 
with underlying matroid $M$ is (either of the) the space(s) 
of points identified by the bijection $\Upsilon_{\tau}$ above.

 The space $\mathcal{R}_{\mathbb{H}}^{w}(M)$ 
of rescaling classes of weak $\mathbb{H}\text{-matroids}$ with underlying matroid $M$ 
is the space of points identified by the bijection $\overline{\Upsilon}_{\tau}$ above.

We define similarly the space $\mathcal{M}_{\mathbb{H}}^{s}(M)$ 
of strong $\mathbb{H}\text{-matroids}$ 
with underlying matroid $M$ and the space $\mathcal{R}_{\mathbb{H}}^{s}(M)$ of rescaling classes of 
strong $\mathbb{H}\text{-matroids}$ with underlying matroid $M$.

\end{definition}

\begin{remark}$\,$\label{rem:Gdn}
\begin{itemize}
\item 
{\em On the topological case.} When $\mathbb H$ is a topological hyperfield, Proposition \ref{common} shows that the spaces $\mathcal{M}_{\mathbb{H}}^{s}(M)$, $\mathcal{M}_{\mathbb{H}}^{w}(M)$, $\mathcal{R}_{\mathbb{H}}^{s}(M)$, $\mathcal{R}_{\mathbb{H}}^{w }(M)$ are well-defined up to homeomorphism.
\item 
{\em On realization spaces and hyperfield Grassmannians.} The (topological) spaces $\mathcal{M}_{\mathbb{H}}^{s}(M)$ and $\mathcal{M}_{\mathbb{H}}^{w}(M)$ are called ``realization spaces'' of $M$ over $\mathbb H$ by Anderson and Davis in their work on hyperfield Grassmannians. In this sense, we address their topology via the study of their quotients $\mathcal{R}_{\mathbb{H}}^{s}(M)$, $\mathcal{R}_{\mathbb{H}}^{w }(M)$. Notice that the fiber of the quotient map is $(\mathbb H^* )^{\vert E \vert -1}$ (see e.g.\ \S\ref{section2a}).
\end{itemize}
\end{remark}

\begin{proof}[Proof of Proposition \ref{common}]
Again we only prove the weak case, leaving to the reader the analogous proof of the strong case. 

Choose an ordered basis $B$ of the underlying matroid and consider the function that assigns to any pair $(\gamma,\delta)\in\mathcal{N}_{\mathbb{H}}^{\tau,w}(M)$ a weak 
Grassmann--Pl\"{u}cker function $\varphi_{(\gamma,\delta)}$ with $\varphi_{(\gamma,\delta)}(B)=1$ 
and satisfying
\begin{equation}
  \label{eq:noeuva}
  \frac{\gamma C(x_{i})}{\gamma C(x_{0})} = (-1)^{i}\odot
  \frac{\varphi_{(\gamma,\delta)}(x_{0},\ldots,\widehat{x}_{i},\ldots,x_{d})}{\varphi_{(\gamma,\delta)}(x_{1},\ldots,x_{d})}
\end{equation}
for all circuits $C$ of $M$, all $x_{0}\in C$ and all $i = 0,\ldots,d$, where
$x_{1},\ldots,x_{d}$ is any ordered
basis containing $C\setminus \{x_{0}\}$.  
With Theorem \ref{TheoremA} and Theorem \ref{TheoremC},
the assignment $(\gamma,\delta)\mapsto\varphi_{(\gamma,\delta)}$ induces the desired bijection 
$ \Upsilon_{\tau}:\mathcal{M}_{\mathbb{H}}^{\tau,w}(M)\rightarrow\mathcal{M}_{\mathbb{H}}^{p,w}(M).$

Now in order to obtain the commutative diagram \eqref{FirstDiagram} it is enough to prove the following.

\noindent {\em Claim.} $ \Gamma_{1}\cong_{\tau}\Gamma_{2}$
 if and only if 
 $\Upsilon_{\tau}(\Gamma_{1})\cong_{p}\Upsilon_{\tau}(\Gamma_{2})$.

\noindent{\em Proof of claim.} For the left-to-right direction  assume $\Gamma_{1}\cong_{\tau}\Gamma_{2}$. Thus, there exist representatives 
$(\gamma_{1},\delta_{1})\in\Gamma_{1}$ and $(\gamma_{2},\delta_{2})\in\Gamma_{2}$ and a function 
$h:E\longrightarrow\mathbb{H}^{\ast}$ satisfying the identities listed in Definition \ref{def_ssim}.  
It is enough to prove that, given any representative $\varphi_{2}\in\Upsilon_{\tau}(\Gamma_{2})$, the 
Grassmann--Pl\"{u}cker function $\varphi_{h}$ defined by 
\begin{equation}
 \varphi_{h}(x_{1},\ldots,x_{d})= \left(
                                  \bigodot_{j=1}^{d}h(x_{j})^{-1}
                                 \right)
                                  \odot\varphi_{2}(x_{1},\ldots,x_{d})
\end{equation}               
belongs to $\Upsilon_{\tau}(\Gamma_{1})$, which amounts to the straightforward check that \eqref{eq:noeuva} is satisfied with $\gamma_{1}$ 
on the left-hand side and $\varphi_{h}$ on the right-hand side.

For the right-to-left direction, assume $\Upsilon_{\tau}(\Gamma_{1})\cong_{p}\Upsilon_{\tau}(\Gamma_{2})$. This means that there 
exist representatives $\varphi_{1}\in\Upsilon_{\tau}(\Gamma_{1})$ and $\varphi_{2}\in\Upsilon_{\tau}(\Gamma_{2})$ as well as some function $h:E\longrightarrow\mathbb{H}^{\ast}$ satisfying the identity of Definition \ref{def_GPssim}. 

Let us then consider a representative $(\gamma_{2},\delta_{2})$ of $\Gamma_{2}$. It is enough to prove that the pair 
$(\gamma_{h},\delta_{h})$ of $\mathbb{H}\text{-circuit}$ and $\mathbb{H}\text{-cocircuit}$ signatures of $M$ 
that are orthogonal with respect to $\tau$ defined by 
\begin{itemize}
 \item $\gamma_{h}C(x)=h^{-1}(x)\odot\gamma_{2}C(x)$ for any circuit $C\in\mathfrak{C}$ and any $x\in C$;
 \item $\delta_{h}D(y)=\tau(h(x))\odot\delta_{2}D(y)$ for any cocircuit $D\in\mathfrak{C}^{\ast}$ and any $y\in D$;
\end{itemize}
belongs to $\Gamma_{1}$. Again, this amounts to the checking that \eqref{eq:noeuva} is satisfied 
 with $\gamma_{h}$ on the left-hand side and $\varphi_{1}$ on the right-hand side as well as
 with $\delta_{h}$ on the left-hand side and the dual GP-function $\varphi_{1}^{(\tau)}$ (defined in  Theorem \ref{Duality}) on the right-hand side.

For the topological claim it is enough to check that $\Upsilon_{\tau}$ and its inverse are continuous. This is the case because they are induced in the quotient topology from the functions given by the explicit form in Equation \eqref{eq:noeuva}. These are continuous since $\mathbb H^*$ is a topological group, hence multiplication and inversion are continuous functions.
\end{proof}

\section{Projective classes of matroids over hyperfields}\label{section3}

In this section we define weak (resp.\ strong) $\mathbb{H}\text{-projective}$ classes of a matroid $M$ in terms of circuits and cocircuits.
We then prove that, under certain conditions, there exists a one-to-one correspondence 
between $\mathcal{R}_{\mathbb{H}}^{w}(M)$ (resp. $\mathcal{R}_{\mathbb{H}}^{s}(M)$) 
and the space of weak (resp.\ strong) $\mathbb{H}\text{-projective}$ classes of $M$.

\subsection{Definition of projective classes}\label{section3a}
Generalizing the constructions and definitions of \cite{DS15, GRS95} to the context
of weak (resp.\ strong) $\mathbb{H}\text{-matroids}$ we now introduce the notion of $\mathbb{H}\text{-projective}$ class 
of a matroid.

Let $M$ be a matroid. Given a pair $(\gamma,\delta)$ of $\mathbb{H}\text{-circuit}$ and 
cocircuit signatures of $M$ 
that are weak (resp.\ strong) orthogonal with respect to $\tau$ (compare Section \ref{section2b})
and for any circuit $C$ and cocircuit $D$ of $M$ with $x$, $y\in C\cap D$ set  
\begin{equation}\label{base}
 \begin{pmatrix}
  C & D \\
  x & y \\
 \end{pmatrix}:=\frac{\gamma C(x)\odot\tau(\delta D(x))}{\gamma C(y)\odot\tau(\delta D(y))}.
\end{equation}

The values   
$\left(\begin{smallmatrix}
        C & D \\
        x & y \\
       \end{smallmatrix}
  \right)$ depend only on the $\sim_{\tau}$ and $\approx_{\tau}$ equivalence class of $(\gamma,\delta)$
(see Definition \ref{sim} and Definition \ref{cong}).
These values satisfy the following relations whenever all terms are defined:
  \begin{itemize}
  \item[] 
  \begin{equation}
         \label{PO1}
    \begin{pmatrix}
      C & D \\
      x & x
    \end{pmatrix}
    =1;
  \end{equation}

  \item[]
  \begin{equation}
    \label{PO2}
    \begin{pmatrix}
      C & D \\
      x & y
    \end{pmatrix}\odot
    \begin{pmatrix}
      C & D \\
      y & z
    \end{pmatrix}\odot
    \begin{pmatrix}
      C & D \\
      z & x
    \end{pmatrix}
    =1;
  \end{equation}

  \item[]
  \begin{equation}
    \label{PO3}
    \begin{pmatrix}
      C_{1} & D_{1} \\
      x & y
    \end{pmatrix}\odot
    \begin{pmatrix}
      C_{2} & D_{2} \\
      x & y
    \end{pmatrix}=
    \begin{pmatrix}
      C_{1} & D_{2} \\
      x & y
    \end{pmatrix}\odot
    \begin{pmatrix}
      C_{2} & D_{1} \\
      x & y
    \end{pmatrix};
  \end{equation}
  \item []
  \begin{equation}
    \label{PO4}
    0\in\mathop{\mathlarger{\mathlarger{\mathlarger{\boxplus}}}}_{x\in C\cap D}
          \begin{pmatrix}
            C & D \\
            x & y
          \end{pmatrix}
  \end{equation}
  for all $C\in\mathfrak{C}$, $D\in\mathfrak{C}^{\ast}$ with $|C\cap D|\leq3$ (resp. any $C$ and $D$ in the strong case) and $y\in C\cap D$.  
  \end{itemize}

These properties we will now take as axioms for the abstract definition of projective classes.

\begin{definition}[Projective classes]\label{HPclass}
Let $M$ be a matroid, and write
 \begin{equation*}
   Q_{M}:= \left\lbrace(C,D,x,y)\in\mathfrak{C}\times\mathfrak{C}^{\ast}\times E\times E\left|
       \begin{array}{c}
           C\cap D\neq\emptyset\\
           x,y\in C\cap D\\ 
       \end{array}
       \right.
       \right\rbrace,\quad d_{M}:= \vert Q_{M}\vert,
 \end{equation*}
where $\mathfrak{C}$ and $\mathfrak{C}^{\ast}$ denote the circuit resp.\ cocircuit set of $M$.

 A \emph{weak} (resp. \emph{strong}) $\mathbb{H}\text{-projective}$ class  
 of a matroid $M$ is a function
 $\Pmap_{\mathbb{H}}^{w}:Q_{M}\longrightarrow\mathbb{H}^{\ast}$ 
 whose values, denoted by 
 $\left(\begin{smallmatrix}
         C & D \\
         x & y \\
        \end{smallmatrix}
   \right)$ 
 as a shorthand for $\Pmap_{\mathbb{H}}^{w}(C,D,x,y)$, satisfy conditions
 \eqref{PO1}, \eqref{PO2}, \eqref{PO3} and \eqref{PO4}. A strong $\mathbb H$-projective class is defined accordingly (with the corresponding version of axiom \eqref{PO4}) and denoted $\Pmap_{\mathbb{H}}^{s}$.
        
 \noindent      
 The set of weak (resp.\ strong) $\mathbb{H}\text{-projective}$ classes of $M$ will be denoted by
 $\mathcal{P}_{\mathbb{H}}^{w}(M)$ 
 (resp. $\mathcal{P}_{\mathbb{H}}^{s}(M)$). We will consider it as a subset of 
   $(\mathbb{H}^{\ast})^{d_{M}}$ (and topologized as such, if $\mathbb H$ is a topological hyperfield).
 \end{definition}

\begin{remark}
  When writing $\left(\begin{smallmatrix}
      C & D \\
      x & y \\
    \end{smallmatrix}\right)$ we always assume that
  $(C,D,x,y)\in Q_{M}$. 
\end{remark}

We now state for later reference a property of weak (resp.\ strong) $\mathbb{H}\text{-projective}$ classes.

\begin{proposition}
 Let $\Pmap_{\mathbb{H}}^{w}$ (resp. $\Pmap_{\mathbb{H}}^{s}$) 
 be a weak (resp.\ strong) $\mathbb{H}\text{-projective}$ class of a matroid $M$. Then,

 \begin{equation}
  \label{PO1b}
  \begin{pmatrix}
    C & D \\
    x & y \\
  \end{pmatrix}
  \odot
  \begin{pmatrix}
    C & D \\
    y & x \\
  \end{pmatrix}=1.
 \end{equation}
Moreover, for every 
$C\in\mathfrak{C}(M)$ and $D\in\mathfrak{C}^{\ast}(M)$ 
 such that $C\cap D=\{x,y\}$, we have 
 \begin{equation}
  \label{PO4b}
  \begin{pmatrix}
             C & D\\
             x & y
         \end{pmatrix}
         =-1.
 \end{equation}

\end{proposition}

\begin{proof} Equation \eqref{PO1b} follows from \eqref{PO1} and \eqref{PO2}.
 For \eqref{PO4b} notice that from \eqref{PO1} and \eqref{PO4} we can derive
 \begin{equation*}
   0\in\mathop{\mathlarger{\mathlarger{\mathlarger{\boxplus}}}}_{u\in\{x,y\}}
            \begin{pmatrix}
             C & D \\
             u & y \\
            \end{pmatrix}=
   1\boxplus\begin{pmatrix}
             C & D \\
             x & y \\
         \end{pmatrix}
 \end{equation*}
 and with uniqueness of inverses (H2) the claim follows. 
\end{proof}

\subsection{WAM hyperfields}\label{section3b}\footnote{The name WAM stands as an acronym for ``Whack-A-Mole'':
 The task of simultaneously satisfying all five 
 conditions in \eqref{confer} can feel like playing the well-known game.}

The aim of this section is to provide an algebraic characterization of those hyperfields for which 
there exists a one-to-one correspondence between the spaces of rescaling classes and of projective classes of matroids.

\begin{definition}[WAM hyperfields]\label{fertile}
 A hyperfield $\mathbb{H}$ is \emph{WAM} 
 if for any $f$, $g$, $\mu\in\mathbb{H}^{\ast}$ condition \eqref{confer} 
 below 
 implies $\mu=1$.
 \begin{equation}\label{confer}
  \tag{
  WAM}
  \left\lbrace
  \begin{array}{l}
   \mu\odot\mu=1                                   \\
   0\in1\boxplus f\boxplus g                       \\
   0\in1\boxplus f\boxplus (\mu\odot g)            \\
   0\in1\boxplus (\mu\odot f)\boxplus g            \\
   0\in1\boxplus (\mu\odot f)\boxplus (\mu\odot g) \\
  \end{array}
  \right.
 \end{equation}
\end{definition}

A straightforward check of definitions shows that the hyperfields listed in Example \ref{example1} 
are all WAM.
The following example shows that the class of non-WAM hyperfields is non-empty.

\begin{example}\label{nonWAM2}
 Let $\mathbb{S}_{L}$ be the multivalued algebraic structure defined on the set $\{0,1,-1\}$ with usual multiplication rule and 
 hyperaddition law given by
 \begin{equation*}
  \begin{array}{cccc}
   \toprule
   \boxplus & 0  & 1          & -1         \\
   \midrule
      0     & 0  & 1          & -1         \\
   \midrule 
      1     & 1  & \{1,-1\}   & \{0,1,-1\} \\
   \midrule 
     -1     & -1 & \{0,1,-1\} & \{1,-1\}   \\
   \bottomrule
  \end{array}
 \end{equation*}
 Case-by-case inspection confirms that $\mathbb{S}_{L}$ is a hyperfield.
 To prove that $\mathbb{S}_{L}$ is non-WAM it is enough to notice that
 \begin{equation*}
 (1,1,-1)\in
  \left\lbrace
  (f,g,\mu)\in(\mathbb{S}_{L}^{\ast})^{3}
  \left|
  \begin{array}{l}
   \mu\odot\mu=1                                   \\
   0\in1\boxplus f\boxplus g                       \\
   0\in1\boxplus f\boxplus (\mu\odot g)            \\
   0\in1\boxplus (\mu\odot f)\boxplus g            \\
   0\in1\boxplus (\mu\odot f)\boxplus (\mu\odot g) \\
  \end{array}
  \right.
  \right\rbrace.
 \end{equation*}
 \end{example}
 \begin{remark}
 Comparing all
 possible hyperfield structures on a set of two elements, we can deduce that 
 a non-WAM hyperfield must have at least $3$ elements. 
\end{remark}

\begin{theorem}\label{main}\todo{(C1) (C2) we already used!} 
 For a hyperfield $\mathbb{H}$ with an involution $\tau$ the following conditions are equivalent:
 \begin{enumerate}[label=(\roman{*})]
  \item \label{C1} $\mathbb{H}$ is WAM;
  \item \label{C2} For any matroid $M$ and for any weak (resp.\ strong) $\mathbb{H}\text{-projective}$ class 
        $\Pmap_{\mathbb{H}}^{w}$ (resp. $\Pmap_{\mathbb{H}}^{s}$) of $M$ there 
        exist $\mathbb{H}^{\ast}\text{-circuit}$ and $\mathbb{H}^{\ast}\text{-cocircuit}$ signatures $\gamma$ and 
        $\delta$ of $M$ that are weak (resp.\ strong) orthogonal with respect to $\tau$ and such that:
        \begin{enumerate}
         \item Identity \eqref{base}
               holds for every circuit $C\in\mathfrak{C}$ and cocircuit $D\in\mathfrak{C}^{\ast}$ of $M$ and all 
               $x$, $y\in C\cap D$;
         \item The $\cong_{\tau}\text{-equivalence}$ class of the weak (resp.\ strong) $\mathbb{H}\text{-matroid}$ 
               represented by $(\gamma,\delta)$ is uniquely 
               determined by the given weak (resp.\ strong) $\mathbb{H}\text{-projective}$ class $\Pmap_{\mathbb{H}}^{w}$
               (resp. $\Pmap_{\mathbb{H}}^{s}$).
        \end{enumerate}
 \end{enumerate} 
\end{theorem}

We postpone the proof after a couple of remarks.

\begin{remark}\label{map}
 As already remarked at the beginning of \S\ref{section3a}, the map $F^{w}$ from
 $\mathcal{N}_{\mathbb{H}}^{\tau,w}(M)$ to $\mathcal{P}_{\mathbb{H}}^{w}(M)$
 (resp. $F^{s}$ from $\mathcal{N}_{\mathbb{H}}^{\tau,s}(M)$ to $\mathcal{P}_{\mathbb{H}}^{s}(M)$) that 
 sends a pair $(\gamma,\delta)$ to the weak (resp.\ strong) $\mathbb{H}\text{-projective}$ class of $M$ defined by identity 
 \eqref{base} depends only on the  equivalence class of $(\gamma,\delta)$ with respect to $\sim_{\tau}$ and $\approx_{\tau}$.
 Hence, $F^{w}$ (resp. $F^{s}$) induces a quotient map
 $\mathcal{F}^{w}:\mathcal{R}_{\mathbb{H}}^{w}(M)\longrightarrow\mathcal{P}_{\mathbb{H}}^{w}(M)$ 
 (resp. $\mathcal{F}^{s}:\mathcal{R}_{\mathbb{H}}^{s}(M)\longrightarrow\mathcal{P}_{\mathbb{H}}^{s}(M)$).
 If the hyperfield $\mathbb{H}$ is WAM, Theorem \ref{main} implies that $\mathcal{F}^{w}$ (resp. $\mathcal{F}^{s}$) 
 is a one-to-one correspondence.
 \end{remark}

\begin{remark}[The topological case]
 When $\mathbb H$ is a topological hyperfield, $\mathcal{F}^{s}$ and $\mathcal{F}^{w}$ are homeomorphisms. In order to check this, notice first that continuity of $F^{w}$ and $F^{s}$ is evident from the explicit form of Equation \eqref{base}, and implies continuity of $\mathcal{F}^{w}$ and $\mathcal{F}^{s}$.  
 The continuity of the inverse functions can be checked by inspecting the proof of Theorem \ref{main} as follows: 
  via the explicit formulas given in \cite[Lemma 2.5 and Lemma 2.6]{GRS95} for the reduction steps of Lemma \ref{lemma25} and Lemma \ref{lemma26}, it is enough to check continuity of the inverse in the case when the underlying matroid is uniform of rank $2$ on $4$. This amounts to an inspection of the table in \eqref{eq:tab:U}, keeping in mind that the functions $\tau^{\pm 1}$ are continuous.
\end{remark}

The remainder of this section is devoted to the proof of Theorem \ref{main}.
First we need to check its statement for 
the uniform matroid of rank $2$ over $4$ elements (denoted by $U_{2}(4)$).

\begin{lemma}\label{keylemma}
 Theorem \ref{main} holds with $M=U_{2}(4)$.
\end{lemma}

\begin{proof}
\ref{C1} $\Longrightarrow$ \ref{C2}:\hfill\break Let us assume $\mathbb{H}$ is a WAM hyperfield. The uniqueness part of the claim 
 follows from a generalization of the arguments of \cite{GRS95} to our context. Hence, it suffices to 
 prove the existence of the desired pair $\gamma$, $\delta$. Let $\Pmap_{\mathbb{H}}^{w}$ 
 (resp. $\Pmap_{\mathbb{H}}^{s}$)
 be a weak (resp.\ strong) $\mathbb{H}\text{-projective}$ class of 
 $U_{2}(4)$. From this, we want to construct a $\mathbb{H}^{\ast}\text{-circuit}$ signature $\gamma$ and a 
 $\mathbb{H}^{\ast}\text{-cocircuit}$ signature $\delta$ that are mutually weak (resp.\ strong) orthogonal and such that \eqref{base} 
 holds for all possible arguments. 
 Write $\mathfrak{C}=\{C_{1},C_{2},C_{3},C_{4}\}$ and $\mathfrak{C}^{\ast}=\{D_{1},D_{2},D_{3},D_{4}\}$ where 
 $C_{i}=D_{i}=[4]\setminus\{i\}$, 
 $1\leq i\leq4$. Then $i\neq j$ implies $|C_{i}\cap D_{j}|=2$. From \eqref{PO4b} we have 
 \begin{equation*}
  \begin{pmatrix}
             C_{\sigma(1)} & D_{\sigma(2)}   \\
                \sigma(3)  &    \sigma(4)    \\
  \end{pmatrix}=-1 
 \end{equation*}
 for each permutation $\sigma\in\mathcal{S}_{4}$.
 Hence, if $\{i,j,k,l\}=\{1,2,3,4\}$ using \eqref{PO3} we find 
 \begin{equation}\label{inv}
  \begin{pmatrix}
   C_{i} & D_{i} \\
   j     & l     \\
  \end{pmatrix}=
  \begin{pmatrix}
   C_{k} & D_{k} \\
   j     & l     \\
  \end{pmatrix}^{-1}
\end{equation}
 so that, from \eqref{PO1b} and the previous observations, the only elements of interest are 
 \begin{equation*}
   \begin{array}{lllllll}
                          \begin{pmatrix}
                              C_{1} & D_{1}\\
                                  2 & 3
                          \end{pmatrix}=:a; & \quad & \quad & 
                          \begin{pmatrix}
                              C_{1} & D_{1}\\
                                  3 & 4
                          \end{pmatrix}=:b; & \quad & \quad&
                          \begin{pmatrix}
                              C_{2} & D_{2}\\
                                  1 & 3
                          \end{pmatrix}=:c. \\
   \end{array}
\end{equation*}
A case-by-case inspection shows that
\begin{equation}\label{mu}
\left(a^{-1}\odot b^{-1}\odot c\right)^{2}=1.
\end{equation}
Now, set $f=b$, $g=a\odot b$ and $\mu=a^{-1}\odot b^{-1}\odot c$. Thus, we can rewrite \eqref{mu} as
\begin{equation}\label{MU}
 \mu\odot\mu=1.
\end{equation}
After some computations, we obtain
\begin{equation}
 \label{abcd}
 \begin{array}{ll}
                     \begin{pmatrix}
                         C_{1} & D_{1}\\
                             3 & 4
                     \end{pmatrix}=f; &
                     \begin{pmatrix}
                         C_{1} & D_{1}\\
                             2 & 4
                     \end{pmatrix}=g;\\
                     \quad & \quad\\
                     \begin{pmatrix}
                        C_{2} & D_{2}\\
                            4 & 3
                     \end{pmatrix}=f; &
                     \begin{pmatrix}
                        C_{2} & D_{2}\\
                            1 & 3
                     \end{pmatrix}=\mu\odot g;\\
                     \quad & \quad\\
                     \begin{pmatrix}
                        C_{3} & D_{3}\\
                            1 & 2
                     \end{pmatrix}=\mu\odot f; &
                     \begin{pmatrix}
                        C_{3} & D_{3}\\
                            4 & 2
                     \end{pmatrix}=g;\\
                     \quad & \quad\\
                     \begin{pmatrix}
                        C_{4} & D_{4}\\
                            2 & 1
                     \end{pmatrix}=\mu\odot f; &
                     \begin{pmatrix}
                        C_{4} & D_{4}\\
                            3 & 1
                     \end{pmatrix}=\mu\odot g.\\  
 \end{array}
\end{equation}
Using \eqref{PO4}, from \eqref{abcd} we deduce that the following conditions are simultaneously fulfilled:
\begin{equation}\label{multiconv}
 \begin{array}{ll}
  0\in1\boxplus f\boxplus g;            & 0\in1\boxplus f\boxplus(\mu\odot g);            \\
  0\in1\boxplus (\mu\odot f)\boxplus g; & 0\in1\boxplus (\mu\odot f)\boxplus(\mu\odot g). \\
 \end{array}
\end{equation}
Since $\mathbb{H}$ is a WAM hyperfield, equations \eqref{MU} and \eqref{multiconv} imply $\mu=1$. In particular, 
\eqref{multiconv} reduces to 
\begin{equation*}
 0\in1\boxplus f\boxplus g.
\end{equation*}
Now, we are ready to construct the desired pair $\gamma$, $\delta$. By direct computation (recall Proposition \ref{algbas})
one can check that the set of circuit and cocircuit signatures defined as the columns of the following tables satisfy \eqref{base}.
\begin{equation}\label{eq:tab:U}
\begin{array}{llll}
   \begin{array}{ccccc}
  \toprule
  E & C_{1} & C_{2}  & C_{3} & C_{4}\\
  \midrule
  1 & \bullet  &    1     &     -1      &     1    \\
  2 &      1   &  \bullet &      1      &     f    \\
  3 &      1   &    1     & \bullet     &    -g    \\
  4 &      1   &   -f     &     -g      & \bullet   \\
  \bottomrule
 \end{array} & \quad & \quad &
 \begin{array}{ccccc}
   \toprule
  E & D_{1} & D_{2}  & D_{3} & D_{4}\\
  \midrule
  1 &  \bullet &  \tau(g)  &  -\tau(f) &  1         \\
  2 &  \tau(g) &  \bullet  &        1  &  1         \\
  3 &  \tau(f) &  1        &  \bullet  &  -1        \\
  4 &  1       &  -1       &        -1 &  \bullet   \\
  \bottomrule
 \end{array}\\
\end{array}
\end{equation}

\noindent \ref{C2} $\Longrightarrow$ \ref{C1}:\hfill\break 
Let us suppose that condition \ref{C2} is fulfilled. Let $f$, $g$, 
$\mu\in\mathbb{H}^{\ast}$ such that \eqref{confer} holds. We want to show that $\mu=1$. Let $\Pmap_{\mathbb{H},f,g,\mu}$ 
be the map from $Q_{U_{2}(4)}$ to $\mathbb{H}^{\ast}$ defined in the following way:
\begin{enumerate}
 \item For $1\leq i,j\leq4$ and $k\in C_{i}\cap D_{j}$
       \begin{equation*}
        \Pmap_{\mathbb{H},f,g,\mu}(C_{i},D_{j},k,k)=1;      
       \end{equation*}
 \item For $1\leq i,j\leq4$, $i\neq j$, $k$, $l\in C_{i}\cap D_{j}$ and $k\neq l$
       \begin{equation*}
        \Pmap_{\mathbb{H},f,g,\mu}(C_{i},D_{j},k,l)=-1;      
       \end{equation*}       
 \item \begin{equation*}
        \begin{array}{l}
         \Pmap_{\mathbb{H},f,g,\mu}(C_{1},D_{1},3,4)=f; \\ 
         \Pmap_{\mathbb{H},f,g,\mu}(C_{1},D_{1},2,4)=g; \\
         \Pmap_{\mathbb{H},f,g,\mu}(C_{1},D_{1},3,2)=f\odot g^{-1}; \\

         \Pmap_{\mathbb{H},f,g,\mu}(C_{2},D_{2},4,3)=f; \\ 
         \Pmap_{\mathbb{H},f,g,\mu}(C_{2},D_{2},1,3)=\mu\odot g; \\
         \Pmap_{\mathbb{H},f,g,\mu}(C_{2},D_{2},4,1)=\mu^{-1}\odot f\odot g^{-1}; \\

         \Pmap_{\mathbb{H},f,g,\mu}(C_{3},D_{3},1,2)=\mu\odot f; \\
         \Pmap_{\mathbb{H},f,g,\mu}(C_{3},D_{3},4,2)=g; \\
         \Pmap_{\mathbb{H},f,g,\mu}(C_{3},D_{3},1,4)=\mu\odot f\odot g^{-1}; \\

         \Pmap_{\mathbb{H},f,g,\mu}(C_{4},D_{4},2,1)=\mu\odot f; \\
         \Pmap_{\mathbb{H},f,g,\mu}(C_{4},D_{4},3,1)=\mu\odot g; \\
         \Pmap_{\mathbb{H},f,g,\mu}(C_{4},D_{4},2,3)=f\odot g^{-1}; \\
        \end{array}
       \end{equation*}
 \item For $1\leq i,j\leq4$ and $k$, $l\in C_{i}\cap D_{j}$, if $\Pmap_{\mathbb{H},f,g,\mu}(C_{i},D_{j},k,l)$ is given,
       then 
       \begin{equation*}
        \Pmap_{\mathbb{H},f,g,\mu}(C_{i},D_{j},l,k)=\Pmap_{\mathbb{H},f,g,\mu}(C_{i},D_{j},k,l)^{-1}.
       \end{equation*}
\end{enumerate}
A straightforward check of the definition of $\Pmap_{\mathbb{H},f,g,\mu}$ 
shows that conditions \eqref{PO1}, \eqref{PO2} and \eqref{PO3} are satisfied. Moreover, since \eqref{confer} 
holds for $f$, $g$, $\mu$ it is not hard to see that \eqref{PO4} is also fulfilled. 
Thus, $\Pmap_{\mathbb{H},f,g,\mu}$ is a weak (resp.\ strong) 
$\mathbb{H}\text{-projective}$ class of $U_{2}(4)$. 
By hypothesis \ref{C2}, there exist a 
$\mathbb{H}^{\ast}\text{-circuit}$ signature
$\gamma$ and a $\mathbb{H}^{\ast}\text{-cocircuit}$ signature $\delta$ of $M$
that are weak (resp.\ strong) orthogonal and such that \eqref{base} holds for 
all possible arguments.

Up to the equivalences $\sim_{\tau}$ and $\cong_{\tau}$ 
(compare Definition \ref{sim} and Definition \ref{cong}), we can assume that 
\begin{equation*}
 \begin{array}{llll}
  \gamma C_{1}(2)=1;  & \gamma C_{1}(3)=1;  & \gamma C_{1}(4)=1 & \gamma C_{2}(1)=1;   \\
  \gamma C_{3}(1)=-1; & \gamma C_{3}(2)=1;  & \gamma C_{4}(1)=1 & \delta D_{1}(4)=1;   \\
  \delta D_{2}(4)=-1; & \delta D_{3}(4)=-1; & \multicolumn{2}{l}{\delta D_{4}(3)=-1.}  \\
 \end{array}
\end{equation*}

Hence, from the definition of $\Pmap_{\mathbb{H},f,g,\mu}$ we can use \eqref{base} and Proposition \ref{algbas}
to compute the other values of $\gamma$ and $\delta$.
In particular, one can compute  
$\gamma C_{2}(4)=-f$ and $\delta D_{3}(1)=-\tau(\mu)\odot\tau(f)$ (cf. Section \ref{appendix1}).

By definition of $\Pmap_{\mathbb{H},f,g,\mu}$ we have
\begin{equation*}
 \Pmap_{\mathbb{H},f,g,\mu}(C_{2},D_{3},1,4)=-1.
\end{equation*}
On the other hand, since \eqref{base} holds for all possible arguments we find
\begin{equation*}
 \Pmap_{\mathbb{H},f,g,\mu}(C_{2},D_{3},1,4)=
 \frac{\gamma C_{2}(1)\odot\tau(\delta D_{3}(1))}{\gamma C_{2}(4)\odot\tau(\delta D_{3}(4))}=
 \frac{1\odot(-\mu\odot f)}{-f\odot-1}=-\mu.
\end{equation*}
So that $-\mu=-1$. Multiplying both sides by $-1$, from item \ref{A1} of Proposition \ref{algbas} we get $\mu=1$.
\end{proof}

\begin{remark}\label{noreductive}
 The arguments used in the proof of Lemma \ref{keylemma} hold for both the weak and the strong case. 
 Given any hyperfield $\mathbb{H}$ the spaces $\mathcal{P}_{\mathbb{H}}^{w}(U_{2}(4))$ and 
 $\mathcal{P}_{\mathbb{H}}^{s}(U_{2}(4))$ always coincide, since for any circuit $C$ and any cocircuit $D$ 
 of $U_{2}(4)$ we have $|C\cap D|\leq3$. 
 However, there exists matroids $M$ such that the spaces 
 $\mathcal{P}_{\mathbb{H}}^{w}(M)$ and $\mathcal{P}_{\mathbb{H}}^{s}(M)$ are different 
 (compare Remark \ref{map} and see \cite[Section 3.10]{BB16} for more details).
 This is because the proof of Theorem \ref{main} relies not only on the ``base case'' discussed in Lemma \ref{keylemma},
 but also on the extension arguments of the following Lemma \ref{lemma25} and Lemma \ref{lemma26}.
\end{remark}

\begin{proof}[Proof of Theorem \ref{main}]
 The 
 implication ``\ref{C2} $\Rightarrow$ \ref{C1}'' is a consequence of Lemma \ref{keylemma}.
 To prove the reverse implication, let us assume that $\mathbb{H}$ is a WAM hyperfield. We claim that 
 condition \ref{C2} is fulfilled. The uniqueness part of the statement can be proved by a straightforward generalization 
 of the arguments of \cite[Theorem 1]{GRS95}. To show existence, we need to state some lemmas that enable 
 us to reduce to the case of $U_{2}(4)$. Both can be proved with an easy extension of the arguments of 
 \cite[Lemma 2.5]{GRS95} and \cite[Lemma 2.6]{GRS95} to the hyperfield context.
 
 \begin{lemma}\label{lemma25}
  Let $a\in C$ be an element of a circuit $C\in\mathfrak{C}$ of $M$ with $|C|\leq2$. 
  If condition \ref{C2} holds for the restriction 
  $M'=M[E\setminus\{a\}]$, then it also holds for $M$.
 \end{lemma}
           
 \begin{lemma}\label{lemma26}
  Let $M$ be a connected matroid  without 
  parallel elements and such that $\operatorname{rk}(M^{\ast})>2$.
  If condition \ref{C2} holds for each proper minor of 
  $M$, then it also holds for $M$.
 \end{lemma}
 
 \noindent Now we are ready to complete the proof of Theorem
           \ref{main}. By way of contradiction let $M$ 
           be a matroid with the minimal number of elements, for which
           Theorem \ref{main} fails. 
           Clearly if the statement of Theorem \ref{main} is true for matroids $M_{1}$ and $M_{2}$, it holds for the 
           direct sum $M_{1}\oplus M_{2}$, too. We can thus assume $M$
           to be connected (otherwise there exists a proper
           connected component of $M$
           for which the claim also fails, contradicting the minimality of $M$).
           The minimality assumption and the self-duality of Theorem \ref{main} ensure that neither $M$ nor $M^*$ have parallel elements (Lemma \ref{lemma25}) as well as both $\operatorname{rk}(M^{\ast})\leq 2$ and $\operatorname{rk}(M)\leq 2$ (Lemma \ref{lemma26}). But then $M$ must be (a minor of) $U_2(4)$, violating Lemma \ref{keylemma}.

\end{proof}

\section{Tutte groups and spaces of rescaling classes}\label{section5}

The aim of this section is to  describe
the space of rescaling classes of matroids over hyperfields using inner Tutte groups (see Appendix, Subsection \ref{section4}).
We then focus on the weak case and we study several characterizations of the set of weak hyperfield rescaling classes of a matroid that can be obtained from  Theorem \ref{big} via the 
different presentations of the inner Tutte group provided by Theorem \ref{isoinnerTuttegroups}.
Lastly, we describe how such spaces behave under passing to a sub-hyperfield of a given hyperfield.

\subsection{Main result}\label{section5a}

As a counterpart of \cite[Theorem 6.1]{DW89} and \cite[Theorem 4.4]{DW90} in the context of 
matroids over hyperfield we now prove the following result. The main object here is the group $\mathbb{T}_{M}^{\mathfrak{C},\mathfrak{C}^{\ast}}$ from \cite{DW89}, see Definition \ref{tutteCDdef}.

\begin{theorem}\label{big}
 Let $M$ be a matroid and let $\mathbb{H}$ be a hyperfield with an involution $\tau$.
 Any pair $(\gamma,\delta)$ of $\mathbb{H}\text{-circuit}$ and 
 $\mathbb{H}\text{-cocircuit}$ signatures of $M$  that are strong orthogonal with respect to $\tau$ defines a homomorphism 
 $\Phi:\mathbb{T}_{M}^{\mathfrak{C},\mathfrak{C}^{\ast}}\longrightarrow\mathbb{H}^{\ast}$ that satisfies:
 \begin{enumerate}[label=($\mathcal{S}$)]
  \item\label{bigS} For any circuit $C\in\mathfrak{C}$ and any cocircuit $D\in\mathfrak{C}^{\ast}$ with intersection given by  
        $C\cap D=\{x_{0},x_{1},\ldots,x_{k}\}$, $k\geq2$, and any circuit $C_{j}\in\mathfrak{C}$ such that 
        $C_{j}\cap D=\{x_{0},x_{j}\}$, $1\leq j \leq k$, 
        \begin{equation*}
         1\in\mathop{\mathlarger{\mathlarger{\mathlarger{\boxplus}}}}_{j=1}^{k}
             \Phi\left(\frac{C(x_{j})C_{j}(x_{0})}{C(x_{0})C_{j}(x_{j})}\right)
        \end{equation*}
 \end{enumerate}
 by setting 
 \begin{enumerate}[label=(DH\arabic{*})]
  \item \label{DH2} $\Phi(C(x))=\gamma C(x)$ for any circuit $C\in\mathfrak{C}$ and any $x\in C$;
  \item \label{DH3} $\Phi(D(y))=\tau(\delta D(y))$ for any cocircuit $D\in\mathfrak{C}^{\ast}$ and any $y\in D$.
 \end{enumerate}
 
 Conversely, any homomorphism $\Phi:\mathbb{T}_{M}^{\mathfrak{C},\mathfrak{C}^{\ast}}\longrightarrow\mathbb{H}^{\ast}$ 
 that satisfies \ref{bigS} defines a pair  $(\gamma,\delta)$ of 
 $\mathbb{H}\text{-circuit}$ and 
 $\mathbb{H}\text{-cocircuit}$ signatures of $M$ that are strong orthogonal with respect to $\tau$ by setting
  \begin{enumerate}[label=(DS\arabic{*})]
  \item \label{DS1} $\gamma C(x)=\Phi(C(x))$ for any circuit $C\in\mathfrak{C}$ and any $x\in C$;
  \item \label{DS2} $\delta D(y)=\tau(\Phi(D(y)))$ for any cocircuit $D\in\mathfrak{C}^{\ast}$ and any $y\in D$.
 \end{enumerate}
 
 Two such homomorphisms define the same strong $\mathbb{H}\text{-matroid}$ with underlying matroid $M$ if and only if
 they coincide on the Tutte group $\mathbb{T}_{M}$, while they define the same strong rescaling  
 class of $M$ if and only if they coincide on the inner Tutte group $\mathbb{T}_{M}^{(0)}$.
 
 In particular, there exist one-to-one correspondences between the following sets:
 \begin{enumerate}[label=(GR\arabic{*})]
  \item \label{GR1} $\mathcal{N}_{\mathbb{H}}^{\tau,s}(M)$ and the homomorphisms from 
                    $\mathbb{T}_{M}^{\mathfrak{C},\mathfrak{C}^{\ast}}$ to $\mathbb{H}^{\ast}$ that satisfy \ref{bigS};
  \item \label{GR2} $\mathcal{M}_{\mathbb{H}}^{s}(M)$ and the homomorphisms from 
                    $\mathbb{T}_{M}$ to $\mathbb{H}^{\ast}$ that satisfy \ref{bigS};
  \item \label{GR3} $\mathcal{R}_{\mathbb{H}}^{s}(M)$ and the homomorphisms from 
                    $\mathbb{T}_{M}^{(0)}$ to $\mathbb{H}^{\ast}$ that satisfy \ref{bigS};
 \end{enumerate}

 The same result holds in the weak case if we replace \ref{bigS} by
  \begin{enumerate}[label=($\mathcal{W}$)]
  \item{\label{bigW}} Same statement as \ref{bigS} but with the condition $k\leq 2$.
  
 \end{enumerate}
\end{theorem}

\begin{remark}[On $\epsilon_M$]\label{rem_eps} Any function $\Phi$ defined on the set $\mathbb T_M^{{\mathfrak{C},\mathfrak{C}^{\ast}}}$ must satisfy 
\begin{equation}\label{eq_eps}
\Phi(\epsilon_M)=
\Phi\left(
\frac{C(x)D(x)}{C(y)D(y)}
\right)
\end{equation}
for every circuit $C$ and every cocircuit $D$ of $M$ with $C\cap D=\{x,y\}$, since the arguments on both sides are the same element in $\mathbb T_M^{{\mathfrak{C},\mathfrak{C}^{\ast}}}$ (by Definition \ref{tutteCDdef}).

In particular, any function $\Phi$ defined by \ref{DH2} and \ref{DH3} with respect to a dual pair $(\gamma,\delta)$ of (strong or weak) $\mathbb H^*$-signatures of the circuits and cocircuits of the matroid $M$ must satify
$\Phi(\epsilon_M)=-1$.
\end{remark}

\begin{remark}[The topological case]
If $\mathbb H$ is endowed with a topology, 
 the space of homomorphism $\mathbb{T}_{M}^{{\mathfrak{C},\mathfrak{C}^{\ast}}}$ to $\mathbb{H}^{\ast}$  can be naturally topologized as a subspace of $ Y := (\mathbb{H}^{\ast})^{\sum_{\mathfrak C}\vert C \vert}\times (\mathbb{H}^{\ast})^{\sum_{\mathfrak C^*}\vert D \vert}$ (see Definition \ref{tutteCDdef}). Comparing Definition \ref{def25}, we see that the one-to one correspondence in (GR1) is a restriction of the map $Y\to Y$ given by a cartesian product of the identity on the first factor and the (componentwise) involution on the second factor, hence it is a homeomorphism. The other two correspondences are homeomorphisms because they are obtained by passing to the appropriate quotient (in the case of (GR2)) or subspace (for (GR3)).
\end{remark}

\begin{proof}[Proof of Theorem \ref{big}]
 Let us prove the strong case. The weak case is proved by the same arguments, replacing \ref{bigS} by \ref{bigW}.
 Let $(\gamma,\delta)$ be a pair of 
 $\mathbb{H}\text{-circuit}$ and $\mathbb{H}\text{-cocircuit}$ signatures of $M$ 
 that are strong orthogonal with respect to $\tau$
 and let $\Phi$ be the map defined by \ref{DH2} and \ref{DH3}. 
 
 \begin{claim} $\Phi$ is a group homomorphism from $\mathbb{T}_{M}^{\mathfrak{C},\mathfrak{C}^{\ast}}$ to $\mathbb{H}^{\ast}$.
 \end{claim}
\begin{proof}[Proof of Claim 1] From Remark \ref{rem_eps} we already know that this map sends $\epsilon_{M}$ to $-1$. Hence,  we only need to check that
 \begin{enumerate}[label=(VH\arabic{*})]
  \item \label{VH1} $\Phi(\epsilon_{M}^{2})=\Phi(\epsilon_{M})\odot\Phi(\epsilon_{M})$;
  \item \label{VH2} $\Phi(C(x))\odot\Phi(D(x))=\Phi(\epsilon_{M})\odot\Phi(C(y))\odot\Phi(D(y))$ 
                    for any circuit $C\in\mathfrak{C}$ 
                    and any cocircuit $D\in\mathfrak{C}^{\ast}$ with $C\cap D=\{x,y\}$.
 \end{enumerate}
 Item \ref{VH1} follows immediately from the fact that $\Phi(\epsilon_M)=-1$ (see Remark \ref{rem_eps}). On the other hand, \ref{VH2} follows from the strong orthogonality 
 with respect to $\tau$ between $\gamma$ and $\delta$. 
 To be more precise, let $C\in\mathfrak{C}$ and $D\in\mathfrak{C}^{\ast}$ be a circuit and a 
 cocircuit of $M$ with $C\cap D=\{x,y\}$. Orthogonality implies
 \begin{equation*}
  0\in\gamma C(x)\odot\tau(\delta D(x))\boxplus\gamma C(y)\odot\tau(\delta D(y))
 \end{equation*}
and in turn
 \begin{equation}\label{twoorthogonal}
  \gamma C(x)\odot\tau(\delta D(x))=-\gamma C(y)\odot\tau(\delta D(y))
 \end{equation}
 that is equivalent, by Remark \ref{rem_eps}, \ref{DH2} and \ref{DH3}, to the equality in \ref{VH2}.
\end{proof}
 
 \begin{claim}The homomorphism $\Phi$ fulfills \ref{bigS}.
 \end{claim}
 \begin{proof}[Proof of Claim 2]
 Let us consider a circuit $C\in\mathfrak{C}$ and a cocircuit 
 $D\in\mathfrak{C}^{\ast}$ with $C\cap D=\{x_{0},x_{1},\ldots,x_{k}\}$, $k\geq 1$. 
 Multiplying both sides of the orthogonality condition \eqref{orco} 
  by $(\gamma C(x_{0})\odot\tau(\delta D(x_{0})))^{-1}$, using distributivity and applying \ref{H3} we can rewrite it as
 \begin{equation*}
  -1\in\mathop{\mathlarger{\mathlarger{\mathlarger{\boxplus}}}}_{j=1}^{k}
        \frac{\gamma C(x_{j})\odot\tau(\delta D(x_{j}))}{\gamma C(x_{0})\odot\tau(\delta D(x_{0}))}.
 \end{equation*}
 Now choose, for each $1\leq j\leq k$, a circuit $C_j$ with $C_{j}\cap D=\{x_{0},x_{j}\}$. Using \eqref{twoorthogonal}, multiplying both sides by $-1$ and using distributivity we obtain
 \begin{equation*}
  1\in\mathop{\mathlarger{\mathlarger{\mathlarger{\boxplus}}}}_{j=1}^{k}
      \frac{\gamma C(x_{j})\odot\gamma C_{j}(x_{0})}{\gamma C(x_{0})\odot\gamma C_{j}(x_{j})}.
 \end{equation*}
 With \ref{DH2} this is the same as
 \begin{equation*}
  1\in\mathop{\mathlarger{\mathlarger{\mathlarger{\boxplus}}}}_{j=1}^{k}
      \frac{\Phi(C(x_{j}))\odot\Phi(C_{j}(x_{0}))}{\Phi(C(x_{0}))\odot\Phi(C_{j}(x_{j}))}
 \end{equation*}
 which, since $\Phi$ is a group homomorphism, is equivalent to \ref{bigS}.
 \end{proof}
 
 This concludes the proof of the first part of the theorem, asserting existence and uniqueness of $\Phi$. For the second part,
 let $\Phi$ be a homomorphism from $\mathbb{T}_{M}^{\mathfrak{C},\mathfrak{C}^{\ast}}$ to $\mathbb{H}^{\ast}$ that 
 maps $\epsilon_{M}$ to $-1$ and that fulfills \ref{bigS} 
 and let us consider the pair $(\gamma,\delta)$ of $\mathbb{H}\text{-circuit}$ and 
 $\mathbb{H}\text{-cocircuit}$ signatures of $M$ defined by \ref{DS1} and \ref{DS2}. 
 \begin{claim} The signatures $\gamma$ and $\delta$ are 
 strong orthogonal with respect to $\tau$.
 \end{claim}
 \begin{proof}[Proof of Claim 3] 
Let $C\in\mathfrak{C}$ be a circuit and $D\in\mathfrak{C}^{\ast}$ be a cocircuit. We check condition \eqref{Orientation}. 
 If $C\cap D=\emptyset$ there is nothing to say.
  If $C\cap D=\{u,v\}$, \eqref{Orientation} follows from \ref{VH2} applied to $C$ and $D$, 
        together with \ref{DS1}, \ref{DS2} and the fact that $\Phi$ maps $\epsilon_{M}$ to $-1$.

  Now, if $C\cap D=\{x_{0},x_{1},\ldots,x_{k}\}$, $k>2$, \eqref{Orientation} follows from \ref{bigS}. 
        Indeed, since $\Phi$ is a group homomorphism and using \ref{DS1}, condition \ref{bigS} is equivalent to
        \begin{equation*}
         1\in\mathop{\mathlarger{\mathlarger{\mathlarger{\boxplus}}}}_{j=1}^{k}\frac{\gamma C(x_{j})\odot\gamma C_{j}(x_{0})}
                                                                           {\gamma C(x_{0})\odot\gamma C_{j}(x_{j})}.
        \end{equation*}
        For any choice of circuits $C_1,\ldots,C_k$ with $C_{j}\cap D=\{x_{0},x_{j}\}$,  $1\leq j\leq k$ (see also proof of Claim 2). Since we already know that \eqref{Orientation} holds 
        for a circuit and a cocircuit that intersect in two points, we can substitute $\frac{\tau(\delta D(x_{j}))}{\tau(\delta D(x_{0}))}$ for $\frac{\gamma C_{j}(x_{0})}{\gamma C_{j}(x_{j})}$ throughout and  find 
        \begin{equation*}
         1\in\mathop{\mathlarger{\mathlarger{\mathlarger{\boxplus}}}}_{j=1}^{k}-\frac{\gamma C(x_{j})\odot\tau(\delta D(x_{j}))}
                                                                            {\gamma C(x_{0})\odot\tau(\delta D(x_{0}))}.
        \end{equation*}
        Multiplying both sides by $-\gamma C(x_{0})\odot\tau(\delta D(x_{0}))$ and using distributivity we get 
        \begin{equation*}
         -\gamma C(x_{0})\odot\tau(\delta D(x_{0}))\in\mathop{\mathlarger{\mathlarger{\mathlarger{\boxplus}}}}_{j=1}^{k}
                                                      \gamma C(x_{j})\odot\tau(\delta D(x_{j}))
        \end{equation*}
        which is equivalent to \eqref{Orientation}.
\end{proof}

Claim 3 proves item \ref{GR1} in the theorem's claim. We proceed now with \ref{GR2}, keeping notations as above.

\begin{claim}
Two homomorphisms $\Phi_{1}, \Phi_{2}: \mathbb{T}_{M}^{\mathfrak{C},\mathfrak{C}^{\ast}}\to \mathbb{H}^{\ast}$ satisfying \ref{bigS} define the same rescaling class of $\mathbb{H}\text{-matroids}$ with underlying 
 matroid $M$ if and only if they coincide on the inner Tutte group $\mathbb{T}^{(0)}_{M}$.
 \end{claim}
 
 \begin{proof} Any two homomorphisms that define representatives of the same rescaling class  of $\mathbb H$-matroids must coincide on the inner Tutte group because (e.g., via \ref{DH2}, \ref{DH3}) any two homomorphisms that define $\approx_{\tau}$-equivalent dual pairs must have the same value on the generators of $\mathbb T^{(0)}_M$ given in Remark \ref{easygen} (see Definition \ref{cong}).

For the other implication, let $\Phi_1$, $\Phi_2$ be as in the claim, form their elementwise quotient $\Phi_1/\Phi_2:\mathbb{T}_{M}^{\mathfrak{C},\mathfrak{C}^{\ast}}\to \mathbb H^*$, $x\mapsto \Phi_1(x)/\Phi_2(x)$ and consider the following (horizontal) exact sequence of Abelian groups from \cite[Theorem 1.5]{DW89} (see also Section \ref{section4a} below).

\begin{center}
\begin{tikzcd}
\mathbb T^{(0)}_M \arrow[r,"f"] & 
\mathbb T_M^{\mathfrak C, \mathfrak C^*} \arrow[r,"\Lambda_M"] \arrow[rdd,"\Phi_1 / \Phi_2"'] & 
\operatorname{im} (\Lambda_M) \arrow[r] \arrow[dd,"\alpha",dashrightarrow] & 0 \\
&&& \\
&& \mathbb H^*  &
\end{tikzcd}
\end{center}

Since the $\Phi_i$ coincide on $\mathbb T^{(0)}_M$, we have that $f\circ (\Phi_1/\Phi_2)$ must be constant (hence equal to the identity element). By the universal property of cokernels there exists a homomorphism $\alpha$ as above. 

Now recall (again from Dress and Wenzel's work, restated in Definition \ref{innerdef} below) that $\operatorname{im}(f) \subseteq \mathbb Z^E \times \mathbb Z^{\mathfrak C} \times \mathbb Z^{\mathfrak C^*}$, and in fact the homomorphism $\Lambda_M$ is defined as $\Lambda_M(C(x))=e_x+e_C$ and $\Lambda_M(D(y))=-e_y+e_D$ for all $C\in \mathfrak C$, $x\in C$ and $D\in \mathfrak C^*$, $y\in D$, where $e_x$, $e_C$, $e_D$ are standard basis vectors of $\mathbb Z^E \times \mathbb Z^{\mathfrak C} \times \mathbb Z^{\mathfrak C^*}$.

Therefore, for every $C\in \mathfrak C$ and every $x\in C$,
$$
(\Phi_1/\Phi_2)(C(x))=\alpha\Lambda_M(C(x))=\alpha(e_C)\alpha(e_x)
$$
in the (multiplicative) abelian group $\mathbb H^*$ and hence 
$$
\alpha(e_C)^{-1}\Phi_1(C(x))=\alpha(e_x)\Phi_2(C(x)).
$$
analogously, for every $D\in \mathfrak C^*$ and $y\in C$ we have
$$
\alpha(e_D)^{-1}\Phi_1(D(y))=\tau(\alpha(e_y)^{-1})\Phi_2(D(y)).
$$

In particular, if $\Phi_1$ defines a strong $\mathbb H$-matroid,   the  $\sim_{\tau}$-equivalent $\mathbb H$-matroid determined via Definition \ref{sim} by $b_C:=\alpha(e_C)^{-1}$ and $l_D:=\alpha(e_D)^{-1}$ is $\approx_{\tau}$-equivalent to the strong $\mathbb H$-matroid defined by $\Phi_2$ (Definition \ref{def_ssim} with $h(x):=\alpha(e_x)$). Hence, the $\mathbb H$-matroids defined by $\Phi_1$ and $\Phi_2$ are in the same rescaling class.

 \end{proof}

\begin{claim}
 There is a one-to-one correspondence between the set 
  of homomorphisms from the inner Tutte group $\mathbb T_M^{(0)}$ to $\mathbb H^*$ satisfying \ref{bigS} and rescaling classes of strong $\mathbb H$-matroids with underlying matroid $M$.
\end{claim}

\begin{proof}[Proof of Claim 5] Call $\operatorname{Hom}(\mathbb T_M^{(0)},\mathbb H^*;\mathcal S)$ the set of homomorphisms 
$\mathbb T_M^{(0)}\to \mathbb H^*$ satisfying \ref{bigS}.

\todo{$\Phi_{(\gamma,\delta)}$ etc.?}

It will be enough to check that (i) conditions \ref{DH2}, \ref{DH3} determine a well-defined function $\mathcal R^s_{\mathbb H}(M)\to\operatorname{Hom}(\mathbb T_M^{(0)},\mathbb H^*;\mathcal S)$, and that (ii) conditions \ref{DS1}, \ref{DS2} determine a well-defined function $\operatorname{Hom}(\mathbb T_M^{(0)},\mathbb H^*;\mathcal S)\to \mathcal R^s_{\mathbb H}(M)$. Once this is established, these are clearly mutually inverse functions and exhibit the required bijection.

From Claim 4 we know that any two representatives of the same rescaling class define the same group homomorphism, hence (i) holds. 
Moreover, given any homomorphism $\mathbb T^{(0)}_M\to \mathbb H^*$ satisfying \ref{bigS}, choose an extension of it to the extended Tutte group $\mathbb T_M^{\mathfrak C, \mathfrak C^*}$ (one always exists, see \eqref{ZFree}). Since condition \ref{bigS} only refers to generators of the inner Tutte group, the chosen extension also satisfies \ref{bigS}. Any two such extensions coincide on the inner Tutte group and hence, as in Claim 4, determine the same rescaling class. This proves (ii).
\end{proof}

Claim 5 proves item \ref{GR3} in the theorem's statement. For item \ref{GR2} it is enough to repeat the arguments in Claim 4 and Claim 5 with the Tutte group $\mathbb T_M$ instead of the inner Tutte group $\mathbb T_M^{(0)}$.

\end{proof}

\subsection{Equivalent formulations}\label{section5b}

The result proved in the previous section enables us to present several descriptions of the space of rescaling classes of a matroid. We state this in the following theorem for weak matroids over hyperfields. An analogous theorem for the strong case can be stated (and proved) following the same template, but we do not do this here because for our later application we do not need it.
\begin{theorem}\label{onetoone}
 Given a matroid $M$ 
 and a hyperfield $\mathbb{H}$, there exist one-to-one 
 correspondences 
 \begin{equation}\label{spossato}
  \mathcal{R}_{\mathbb{H}}^{w}(M)
  \stackrel{}{\longleftrightarrow}
  \mathcal{H}_{\mathbb{H}}^{w}(M)
  \longleftrightarrow
  \mathcal{G}_{\mathbb{H}}^{w}(M)
  \longleftrightarrow
  \mathcal{G}_{\mathbb{H}}^{R,w}(M)
 \end{equation}
 where $\mathcal{R}_{\mathbb{H}}^{w}(M)$ is the set of weak $\mathbb{H}\text{-rescaoing}$ classes of $M$ and
 $\mathcal{H}_{\mathbb{H}}^{w}(M)$, $\mathcal{G}_{\mathbb{H}}^{w}(M)$ and $\mathcal{G}_{\mathbb{H}}^{R,w}(M)$ are 
 the sets defined in the list (i)-(iii) below.

 If $\mathbb H$ is a topological hyperfield, these functions are homeomorphisms with respect to the natural topologies.
 \end{theorem}

 \begin{itemize}
  \item[(i)] $\mathcal{H}_{\mathbb{H}}^{w}(M)$ is the set of group homomorphisms 
  $\Phi:\mathbb{T}_{M}^{(0)}\longrightarrow\mathbb{H}^{\ast}$ satisfying:
  \begin{enumerate}[label=(\alph{*})]
   \item \label{a} $\epsilon_{M}\mapsto-1$;
   \item \label{b} For $x_{1},x_{2},x_{3},x_{4}\in L\subseteq E,$ $\operatorname{dim}(L)=1$ 
                   and pairwise distinct circuits $C_{i}\subseteq L\setminus\{x_{i}\}$ (such $C_i$ are unique, see \cite[Lemma 2.7.1]{Fourn}) 
                   \begin{equation*}
                    1\in\Phi\left(\frac{C_{1}(x_{3})C_{2}(x_{4})}{C_{1}(x_{4})C_{2}(x_{3})}\right)
                    \mathop{\mathlarger{\mathlarger{\mathlarger{\boxplus}}}}
                    \Phi\left(\frac{C_{4}(x_{3})C_{2}(x_{1})}{C_{4}(x_{1})C_{2}(x_{3})}\right).
                   \end{equation*}
  \end{enumerate}
  
  \todo{Why the weird indexing $i_1$ etc.?}
  \item[(ii)] $\mathcal{G}_{\mathbb{H}}^{w}(M)$ is the set of $\mathbb{H}^{\ast}\text{-valued}$ functions 
        $\psi_{(C_{i_{1}}C_{i_{2}}|C_{i_{3}}C_{i_{4}})}$ 
        (called $\mathbb{H}^{\ast}$ \emph{cross-ratios}) defined for 
        $C_{i_{1}}$, $C_{i_{2}}$, $C_{i_{3}}$, $C_{i_{4}}\in\mathfrak{C}$ with 
        $\operatorname{dim}(C_{i_{1}}\cup C_{i_{2}}\cup C_{i_{3}}\cup C_{i_{4}})=1$ 
        and $\{C_{i_{1}},C_{i_{2}}\}\cap\{C_{i_{3}},C_{i_{4}}\}=\emptyset$,
        satisfying 
        \begin{equation}\label{CO6}
         1\in\psi_{(C_{i_{1}}C_{i_{2}}|C_{i_{3}}C_{i_{4}})}\boxplus\psi_{(C_{i_{4}}C_{i_{2}}|C_{i_{3}}C_{i_{1}})}
        \end{equation}
        for all pairwise distinct circuits $C_{i_{1}},\ldots,C_{i_{4}}$, as well as
        identities \ref{R3}, \ref{R4}, \ref{R5}, \ref{R6} and \ref{R7} from Definition~\ref{innerTuttegroupTM2}, see Appendix \ref{App:TG}.
        (Here we interpret 
        these identities as equations among $\mathbb{H}^{\ast}\text{-valued}$ functions, under the assumption that 
        the value of $\xi_{M}$ is $-1$.)
        
        With the setup of Appendix \ref{App:TG} one sees that imposing \ref{R3}, \ref{R4}, \ref{R5}, \ref{R6} and \ref{R7} is equivalent to saying that $\psi$ defines a group homomorphism from $\mathbb T^{(2)}$ to the multiplicative group $\mathbb H^*$.

 \item[(iii)] $\mathcal{G}_{\mathbb{H}}^{R,w}(M)$ is the set of $\mathbb{H}^{\ast}\text{-valued}$ functions 
       $\phi_{(C_{j_{1}}C_{j_{2}}|C_{j_{3}}C_{j_{4}})}$ 
       (called \emph{reduced} $\mathbb{H}^{\ast}$-\emph{cross-ratios}) 
       defined for 
       $C_{j_{1}}$, $C_{j_{2}}$, $C_{j_{3}}$, $C_{j_{4}}\in\mathfrak{C}$  with the 
       properties that
       $\operatorname{dim}(C_{j_{1}}\cup C_{j_{2}}\cup C_{j_{3}}\cup C_{j_{4}})=1,$ $j_{1}<j_{2},$ $j_{3}<j_{4}$,  
       $j_{1}<j_{3}$ and satisfying 
       \begin{equation}\label{CR4}
        1\in\phi_{(C_{j_{1}}C_{j_{2}}|C_{j_{3}}C_{j_{4}})}\boxplus
                               \phi_{(C_{j_{1}}C_{j_{3}}|C_{j_{2}}C_{j_{4}})}
       \end{equation}
       for any family of circuits $C_{j_{1}}$, $C_{j_{2}}$, $C_{j_{3}}$, $C_{j_{4}}\in\mathfrak{C}$ 
       such that $j_{1}<j_{2}<j_{3}<j_{4}$ and 
       $\operatorname{dim}(C_{j_{1}}\cup C_{j_{2}}\cup C_{j_{3}}\cup C_{j_{4}})=1$, as well as identities 
       \ref{S3}, \ref{S4} and \ref{S5} from Definition~\ref{innerTuttegroupTMJ0}, see Appendix \ref{App:TG}. 
       (Here we interpret 
        these identities as equations among $\mathbb{H}^{\ast}\text{-valued}$ functions, under the assumption that 
        the symbol $\eta_{M,<_{J}}$ has value $-1$.)

        Again, with the setup of Appendix \ref{App:TG} imposing \ref{S3}, \ref{S4} and \ref{S5} is equivalent to requiring that $\phi$ defines a group homomorphism from $\mathcal T_{M,<J}^{(0)}$ to the multiplicative group $\mathbb H^*$.       
\end{itemize}

\begin{definition}\label{allnumbers}
 Given a matroid $M$ we define the following integer numbers:
 \begin{itemize}
  \item $k_{M}$: the number of reduced cross-ratios (i.e., the number of generators of $\mathcal{T}_{M,<_{J}}^{(0)}$ that are of the form 
                 \ref{Q2});
  \item $n_{M}$: the number of relations \eqref{CR4} (same as the number of relations \ref{S3}).
 \end{itemize}
\end{definition}

\begin{remark}
 \label{lem5}
It is easy to see that there are three ways to totally order a $4$-tuple respecting the conditions of \ref{S3}. Hence, $k_M=3n_M$.

In fact, the set of reduced cross-ratios can be partitioned into triples  so that each relation \eqref{CR4} (resp.\ \ref{S3}) involves cross-ratios from one and only one triple.
\end{remark}

\begin{remark}[Minors of Fano-type]
A glance at Section \ref{App:TG} shows that, if a matroid $M$ has the Fano matroid or its dual as a minor, in its inner Tutte group we have $\epsilon_M=1$ (resp.\ $\sigma_M=1$, $\xi_M=1$). With Remark \ref{rem_eps} we see that then the space of (rescaling classes of) $\mathbb H$-matroids over $M$ is nonempty only for hyperfields $\mathbb H$ satisfying $1=-1$. Compare this with Example \ref{ex:fano}.
\end{remark}

\begin{proof}[Proof of Theorem \ref{onetoone}]
The bijection between $\mathcal{R}_{\mathbb{H}}^{w}(M)$ and $\mathcal{H}_{\mathbb{H}}^{w}(M)$ is in fact an identity (hence a homeomorphism in the topological case). This follows from statement \ref{GR3} because item (b) in the definition of $\mathcal{H}_{\mathbb{H}}^{w}(M)$ imposes the same condition as \ref{bigS}, since for every circuit-cocircuit pair with three-element intersection as in \ref{bigS} there is a corresponding configuration of four circuits and four elements as in (b) such that the values of $\Phi$ on the corresponding ratios are identical -- and vice-versa.

The other two bijections follow from the equivalence of the presentations $\mathbb T^{(2)}_M$, $\mathbb T^{(1)}_M$, $\mathcal{T}_{M,<_{J}}^{(0)}$ of the inner Tutte group (see \cite[Theorem 3, Theorem 4]{GRS95}, Theorem \ref{isoinnerTuttegroups} and
 Lemma \ref{TLL}). Such equivalences provide isomorphisms 
 \begin{equation}\label{OOCG}
  \hom\left(\mathbb{T}_{M}^{(1)},\mathbb{H}^{\ast}\right)\leftrightarrow
  \hom\left(\mathbb{T}_{M}^{(2)},\mathbb{H}^{\ast}\right)\leftrightarrow
  \hom\left(\mathcal{T}_{M,<_{J}}^{(0)},\mathbb{H}^{\ast}\right)
 \end{equation}
 that induce, by restriction, the desired bijections.

These sets of group homomorphisms are naturally topologized as spaces of homomorphisms of topological groups, by regarding the Tutte groups as discrete groups.  Now the bijections in Equation \eqref{OOCG} are induced from isomorphisms of (discrete) topological groups, hence they are homeomorphisms of topological spaces. Finally, the maps in Equation \eqref{spossato} are bijective restrictions of homeomorphisms, hence homeomorphisms themselves.
\end{proof}

\subsection{Embeddings of spaces of rescaling classes}
\label{section5c}

\begin{theorem}\label{rescalingspaceembeddings}
 Given a matroid $M$ and a sub-hyperfield $\mathbb{H}_{1}$ of the hyperfield $\mathbb{H}_{2}$, there exist injections
 \begin{equation*}
 i:\mathcal{M}_{\mathbb{H}_{1}}^{w}(M)\hookrightarrow\mathcal{M}_{\mathbb{H}_{2}}^{w}(M),\quad\quad
  j:\mathcal{R}_{\mathbb{H}_{1}}^{w}(M)\hookrightarrow\mathcal{R}_{\mathbb{H}_{2}}^{w}(M)
 \end{equation*}
that are continuous. The same holds in the strong case.
\end{theorem}

\begin{proof}
Consider the inclusion map of hyperfields representing the sub-hyperfield relationship. It induces an injective group homomorphism $\iota: \mathbb H_1^* \hookrightarrow \mathbb H_2^*$ between the corresponding multiplicative groups. Left-cancellativity of $\iota$ as a monomorphism proves that the induced maps 
$$
\operatorname{hom}(\mathbb T_M,\mathbb H_1^*)\hookrightarrow 
\operatorname{hom}(\mathbb T_M,\mathbb H_2^*),\quad\quad
\operatorname{hom}(\mathbb T^{(1)}_M,\mathbb H_1^*)\hookrightarrow \operatorname{hom}(\mathbb T^{(1)}_M,\mathbb H_2^*)
$$
are injections, and thus so are the functions $i$, $j$ induced by restriction to subsets (compare Theorem \ref{big} and the proof of Theorem \ref{onetoone}). Continuity of $\iota$ implies the continuity of $i$ and $j$ since the corresponding spaces are topologized as subspaces of spaces of homomorphisms of topological groups.
\end{proof}

\subsection{An explicit description of ${\mathcal{G}}_{\mathbb{H}}^{R,w}(M)$ and some examples}

We now provide an explicit description of the set ${\mathcal{G}}_{\mathbb{H}}^{R,w}(M)$ as solution of systems of equations.

\begin{definition}\label{structureset}
  Given a hyperfield $\mathbb{H}$ we set 
  \begin{equation*}
   X_{\mathbb{H}}:=\left\lbrace(u,v)\in\mathbb{H}^{\ast}\times\mathbb{H}^{\ast}\mid1\in u\boxplus v\right\rbrace.
  \end{equation*}
\end{definition}

Recall from Definition \ref{allnumbers} the numbers $k_{M}$ and $n_{M}$. In particular, $n_M$ is the number of relations of type \eqref{CR4} and $k_M$ is the number of reduced $\mathbb H^*$ cross-ratios. 

\begin{definition}\label{def:Bone}
Let  $\mathcal{B}\subseteq(\mathbb{H}^{\ast})^{k_{M}}$ be the space of solutions of \ref{S3}, \ref{S4} and \ref{S5}.
\end{definition}

\todo{I have added proofs of the following two things (So we do not refer to D-Saini)}
\begin{proposition}
\label{geo1}
Let $M$ be a matroid without minors of Fano or dual-Fano type. 
 Up to permuting variables, the set $\mathcal G_{\mathbb{H}}^{R,w}(M)$ equals
        \begin{equation*}
           \mathcal{B}\cap\prod_{j=1}^{n_{M}}\left(X_{\mathbb{H}}\times\mathbb{H}^{\ast}\right)
           \subseteq(\mathbb{H}^{\ast})^{3n_{M}}.
        \end{equation*}
\end{proposition}

\begin{proof} Recall Remark \ref{lem5}, fix an enumeration of the set of relations by indices $j=1,\ldots n_M$ and assign to each $j$ a triple $\alpha_j,\beta_j,\gamma_j$ of reduced cross-ratios, where $\alpha_j$ and $\beta_j$ are those cross-ratios appearing nontrivially in the $j$-th relation \eqref{CR4}. In particular, $k_M=3n_M$ and 
we can enumerate the set of reduced cross-ratios as $\alpha_1,\beta_1,\gamma_1,\alpha_2,\beta_2,\gamma_2,\alpha_3,\ldots$ etc.

If we now understand cartesian products to be taken with respect to the chosen enumerations, we see that

$$
\prod_{j=1}^{n_M}(X_{\mathbb H}\times \mathbb H^*) \subseteq (\mathbb H^*)^{3n_M}
$$
is exactly the subset of all values of the reduced cross-ratios that satisfy all relations \eqref{CR4}. The claim follows.

\end{proof}

\begin{corollary}
\label{geo2}
 Let $F:(\mathbb{H}^{\ast})^{3n_{M}}\longrightarrow(\mathbb{H}^{\ast})^{2n_{M}}$ be the projection map given by 
 $(\alpha_{j},\beta_{j},\gamma_{j})_{1\leq j\leq n_{M}}\mapsto(\alpha_{j},\beta_{j})_{1\leq j \leq n_{M}}$, where we label coordinates as in the proof of Proposition \ref{geo1}. Then the restriction
 \begin{equation*}
  \left.F\right|_{\mathcal{B}\cap\prod_{j=1}^{n_{M}}\left(X_{\mathbb{H}}\times\mathbb{H}^{\ast}\right)}:
                  \mathcal{B}\cap\prod_{j=1}^{n_{M}}\left(X_{\mathbb{H}}\times\mathbb{H}^{\ast}\right)\longrightarrow
                 F(\mathcal{B})\cap\prod_{j=1}^{n_{M}}X_{\mathbb{H}}. 
 \end{equation*}
 is a bijection. If $\mathbb H$ is a topological hyperfield, this map is a homeomorphism. 
\end{corollary}

We postpone the proof of this corollary after some examples and applications.

\begin{example}\label{tropicalexample}
 As previously seen the tropical hyperfield $\mathbb{T}_{+}$ is WAM. Hence, in view of Theorem \ref{main} 
 and Corollary \ref{geo2}, given a matroid $M$ without minors of Fano or dual-Fano type, it is possible to describe the space 
 $\mathcal{R}_{\mathbb{T}_{+}}^{w}(M)$ of rescaling classes of weak $\mathbb{T}_{+}\text{-matroids}$ with underlying matroid 
 $M$ as intersection $\mathcal{L}\cap\prod_{j}^{n_{M}}X_{\mathbb{T}_{+}}$, 
 where $n_{M}$ is the number introduced in Definition \ref{allnumbers},  
 $X_{\mathbb{T}_{+}}$ is the tropical line 
 \begin{equation*}
  X_{\mathbb{T}_{+}}=\left\lbrace
                      (u,v)\in\mathbb{R}^{2}
                      \left|
                       \begin{array}{lll}
                        0=\max\{u,v\}                                    & \text{if} & u\neq v \\
                        0\in\{c\in\mathbb{R}\cup\{-\infty\}\mid c\leq u\} & \text{if} & u=v     \\
                       \end{array}
                      \right.
                     \right\rbrace
 \end{equation*}
 and $\mathcal{L}$ is a linear subspace of $\mathbb{R}^{2n_{M}}$. 
\end{example}

As a consequence of Theorem \ref{onetoone} and Corollary \ref{geo2} we obtain an explicitly computable upper 
bound to the number of rescaling classes of weak $\mathbb{H}\text{-matroids}$ with underlying matroid $M$, 
when $\mathbb{H}$ is a finite WAM hyperfield. 

\begin{proposition}
 \label{finiteH}
 For a matroid $M$ without minors of Fano or dual-Fano type and a finite WAM hyperfield $\mathbb{H}$,
 let $n_{M}$ be the number introduced in Definition \ref{allnumbers} and let $X_{\mathbb{H}}$ be the set 
 introduced in Definition \ref{structureset}. Thus,
 \begin{equation*}
  \left|\mathcal{R}_{\mathbb{H}}^{w}(M)\right|\leq\left|X_{\mathbb{H}}\right|^{n_{M}}.
 \end{equation*}
\end{proposition}

\begin{proof}[Proof of Proposition \ref{finiteH}]
 The assumption that $\mathbb{H}$ is finite ensures us that $\mathcal{R}_{\mathbb{H}}(M)$ and $X_{\mathbb{H}}$ 
 are both finite. Since $\mathbb{H}$ is WAM, from Corollary \ref{geo2} we get
 \begin{equation*}
  \left|\mathcal{R}_{\mathbb{H}}^{w}(M)\right|=\left|F(\mathcal{B})\cap\prod_{j=1}^{n_{M}}X_{\mathbb{H}}\right|\leq 
  \left|\prod_{j=1}^{n_{M}}X_{\mathbb{H}}\right|=\left|X_{\mathbb{H}}\right|^{n_{M}}.
 \end{equation*}
\end{proof}

\begin{example}\label{signboundexample}
\todo{This example changed (E)}
 For the sign hyperfield $\mathbb{S}$ the set $X_{\mathbb{S}}$ consists of three elements (see \cite[Example 1]{GRS95}), and we know that every weak $\mathbb S$-matroid is a strong $\mathbb S$-matroid. 
 Thus, in the context of oriented matroids, Proposition \ref{finiteH} gives an upper bound for the number of 
 reorientation classes of oriented matroids with prescribed underlying matroid:
 $$
 \left|\mathcal{R}_{\mathbb{S}}(M)\right|\leq3^{n_{M}}.
 $$
\end{example}

\begin{proof}[Proof of Corollary \ref{geo2}]
  Let $\widetilde{\mathcal{B}}\subseteq(\mathbb{H}^{\ast})^{3n_{M}}$ be the space of solutions of 
  \ref{S3}.
  $\widetilde{\mathcal{B}}$ is non-empty, since $(1,1,-1,\ldots,1,1,-1)\in\widetilde{\mathcal{B}}$.
  The restriction map 
  \begin{equation*}
   \left.F\right|_{\widetilde{\mathcal{B}}}:\widetilde{\mathcal{B}}\longrightarrow(\mathbb{H}^{\ast})^{2n_{M}}
  \end{equation*}
  is a bijection with inverse
  $G:(\mathbb{H}^{\ast})^{2n_{M}}\longrightarrow\widetilde{\mathcal{B}}$ defined by
  \begin{equation*}
   (u_{j},v_{j})_{1\leq j \leq n_{M}}\mapsto(u_{j},v_{j},-u_{j}^{-1}\odot v_{j})_{1\leq j\leq n_{M}}.
  \end{equation*}
  
  Notice that in the topological case $G$ is continuous as a composition of continuous functions, and $\left.F\right|_{\widetilde{\mathcal{B}}}$ is continuous as restriction of a continuous function. Hence $\left.F\right|_{\widetilde{\mathcal{B}}}$ is a homeomorphism. 

  Since the map 
  \begin{equation*}
   \left.F\right|_{\mathcal{B}\cap\prod_{j=1}^{n_{M}}\left(X_{\mathbb{H}}\times\mathbb{H}^{\ast}\right)}:
                   \mathcal{B}\cap\prod_{j=1}^{n_{M}}\left(X_{\mathbb{H}}\times\mathbb{H}^{\ast}\right)\longrightarrow
                  F\left(\mathcal{B}\cap\prod_{j=1}^{n_{M}}\left(X_{\mathbb{H}}\times\mathbb{H}^{\ast}\right)\right). 
  \end{equation*}
  is a restriction of $\left.F\right|_{\widetilde{\mathcal{B}}}$, it is also bijective (resp.\ a homeomorphism)
  
  To conclude our proof it suffices to prove that
  \begin{equation}\label{DoubleInclusiongeo2}
    F(\mathcal{B})\cap\prod_{j=1}^{n_{M}}X_{\mathbb{H}}= F\left(\mathcal{B}\cap\prod_{j=1}^{n_{M}}\left(X_{\mathbb{H}}
                                            \times\mathbb{H}^{\ast}\right)\right).
  \end{equation} 
  This is a straightforward check, that we nevertheless include for completeness.
 	For the left-to-right inclusion let $P\in F(\mathcal{B})\cap\prod_{j=1}^{n_{M}}X$ and write $P$ in the form 
         $P=(u_{j},v_{j})_{1\leq j\leq m}$. From $P\in F(\mathcal{B})$, there exists $Q\in\mathcal{B}$ such that 
         $P=F(Q)$. Since the points of $\mathcal{B}$ satisfy \ref{S3}, we must have 
         $Q=(u_{j},v_{j},-u_{j}^{-1}\odot v_{j})_{1\leq j\leq m}$, and hence
         \begin{equation}\label{WhereQ}
          Q\in\prod_{j=1}^{n_{M}}\left(X_{\mathbb{H}}\times\mathbb{H}^{\ast}\right)
         \end{equation}
         as required.
          
  	For the right-to-left inclusion, let 
         $P\in F\left(\mathcal{B}\cap\prod_{j=1}^{n_{M}}\left(X_{\mathbb{H}}\times\mathbb{H}^{\ast}\right)\right)$.
         Hence, there is $Q\in\mathcal{B}\cap\prod_{j=1}^{n_{M}}\left(X_{\mathbb{H}}\times\mathbb{H}^{\ast}\right)$ 
         such that $P=F(Q)$.
         Clearly $P\in F(\mathcal{B})$. Write $P$ in the form $P=(u_{j},v_{j})_{1\leq j\leq n_{M}}$. Now, the points of $\mathcal{B}$ satisfy \ref{S3}, so 
         from $Q\in\mathcal{B}$ and $P=F(Q)$, by the definition of $F$, we deduce that $Q=(u_{j},v_{j},-u_{j}^{-1}\odot v_{j})_{1\leq j\leq n_{M}}$.
         This, together with $Q\in\prod_{j=1}^{n_{M}}\left(X_{\mathbb{H}}\times\mathbb{H}^{\ast}\right)$
         and the definition of the map $F$, implies
         \begin{equation}\label{WhereP}
          P\in\prod_{j=1}^{n_{M}}X_{\mathbb{H}}
         \end{equation}
         and concludes the proof.
\end{proof}

\section{Some applications}
\label{sec:App}

  \subsection{Non-$\mathbb C$-realizable uniform weak phased matroids}\label{ss:realizable}
  
  We turn to the case of matroids over the phase hyperfield (a.k.a.\ {\em phased matroids}). These are structures that grew out of the attempt to construct an analogue of oriented matroid theory for complex vectorspaces \cite{AD12,BKRG03}. In this context it is natural to ask about representability  over $\mathbb C$ of a given phased matroid -- a line of research that was already tackled in \cite{BKRG03} for rank $2$.

        In this section we use some topological and geometric properties of the space introduced in Corollary \ref{geo2} 
to prove the existence of non-realizable non-chirotopal uniform weak phased matroids. Let us first recall some terminology.

 A weak phased matroid $\Phi \in \mathcal{M}_{\mathbb{P}}^w(M)$ -- say of rank $d$ on the ground set $[m]$ -- is called $\mathbb C$-realizable (here henceforth just ``{\em realizable}'') if there is a complex matrix $A\in M_{d,m}(\mathbb{C})$  such that   $\Phi$ contains a representative given by the function
\begin{equation}\label{DefAsso}
\varphi_{A}:[m]^{d}\longrightarrow(S^{1}\cup\{0\});\quad  (i_{1},\ldots,i_{d})\mapsto\operatorname{ph}(\det(A^{i_{1}},\ldots,A^{i_{d}})),
\end{equation}
where $A^{j}$ denotes the $j\text{-th}$ column of $A$.
This is equivalent to saying that $\Phi$ is in the image of the function $\mathcal{M}_{\mathbb{C}}^w(M)\to \mathcal{M}_{\mathbb{P}}^w(M)$ induced by the hyperfield homomorphism $\operatorname{ph}:\mathbb C \to \mathbb P$.

A natural class of nonrealizable phased matroids is induced by the class of nonrealizable oriented matroids, via the hyperfield embedding $\iota: \mathbb S\hookrightarrow \mathbb P$ (see Section \ref{section5c}). In order to capture this situation, we call $\Phi \in  \mathcal{M}_{\mathbb{P}}^w(M)$ \emph{chirotopal} if $\Phi\in \iota_* (\mathcal{M}_{\mathbb S}(M))$, i.e., if there exists  
 $\chi\in\Phi$ with $\chi ([m]^d)\subseteq \{0,+1,-1\}$. This $\chi$ is then a chirotope in the sense of oriented matroid theory, see \cite{Zie}, hence the terminology.   
  
  \begin{center}
  
\begin{tikzpicture}
\node at (0,0) (P) {$\mathbb P$};
\node at (-2,0) (C) {$\mathbb C$};
\node at (-1,.2)(ph) {{\small $\operatorname{ph}$}};
\node at (2,0) (K) {$\mathbb K$};
\node at (0,1.5) (S) {$\mathbb S$};
\node at (.1,.8) (i) {{\small $\iota$}};
\draw [->] (C) -- (P);
\draw [->] (P) -- (K);
\draw [->] (S) -- (P);
\end{tikzpicture}
$\quad\quad$
\begin{tikzpicture}
\node at (0,0) (P) {$\mathcal R_{\mathbb P}^w(M)$};
\node at (-2.5,0) (C) {$\mathcal R_{\mathbb C}(M)$};
\node at (-1.2,.2)(ph) {{\small $\operatorname{ph}_*$}};
\node at (2,0) (K) {$M$};
\node at (0,1.5) (S) {$\mathcal R_{\mathbb S}(M)$};
\node at (.3,.8) (i) {{\small $\iota_*$}};
\draw [->] (C) -- (P);
\draw [->] (P) -- (K);
\draw [->] (S) -- (P);
\end{tikzpicture}
  
  \end{center}

We then say that a phasing class $P\in\mathcal{R}_{\mathbb{P}}^w(M)$ is \emph{realizable} 
        if there exists a realizable weak phased matroid $\Phi\in\mathcal{M}_{\mathbb{P}}^w(M)$ such that 
        $P=\pi(\Phi),$ where
        $\pi_{\mathbb P}:\mathcal{M}_{\mathbb{P}}^w(M)\longrightarrow\mathcal{R}_{\mathbb{P}}^w(M)$ is the standard quotient map (see Definition \ref{TypeC}). Analogously we define a \emph{chirotopal} phasing class.

\begin{remark}
Notice that, with \cite[Lemma 1.6]{BKRG03}, the realizability of a phasing class $P\in\mathcal{R}_{\mathbb{C}}(M)$ 
is equivalent to requiring realizability of each weak phased matroid $\Phi$  such that $P=\pi(\Phi)$.
\end{remark}

We now state our theorem and outline the main steps of the proof, postponing some technical 
lemmas to Appendix \ref{appendixBBBYYY}.

\begin{theorem}
  \label{SuperMAIN}
 For $m\geq5$ and $2\leq d\leq m-2$ there exists 
 a non-realizable non-chirotopal weak phased matroid with underlying matroid $U_{d}(m)$.
\end{theorem}

\begin{proof}
Let us consider the natural total ordering of the ground set $E=\{1,\ldots,m\}$, and an enumeration 
 $C_{1},C_{2},\ldots$ of the circuits of $U_{d}(m)$ (e.g. according to the lexicographic order).
 
 The {idea} of the proof is to exploit the topology of the subspace of realizable classes 
 in order to prove that it cannot exhaust the space $\mathcal{R}_{\mathbb{P}}^w(U_{d}(m))$. 
 This is the content of Lemma \ref{RealizabilityCriterion}, which reduces our task to finding a 
 certain type of realizable class, and more precisely to 
 finding  
 a matrix 
 $A\in M_{d,m}(\mathbb{C})$ such that:
 \begin{enumerate}
  \item[(A1)] The weak phirotope $\varphi_{A}$ associated to $A$ has underlying matroid $U_{d}(m)$;
  \item[(A2)] For any family $C_{d_{1}}$, $C_{d_{2}}$, $C_{d_{3}}$, $C_{d_{4}}\in\mathfrak{C}$ of 
              circuits of $U_{d}(m)$ with the properties that $d_{1}<d_{2}<d_{3}<d_{4}$ and 
              $\operatorname{dim}(C_{d_{1}}\cup C_{d_{2}}\cup C_{d_{3}}\cup C_{d_{4}})=1$, one has that
              \begin{equation}\label{dadostar}
              \begin{aligned}
               &\left(\frac{\varphi_{A}(C_{d_{1}}(x_{d_{3}}))\varphi_{A}(C_{d_{2}}(x_{d_{4}}))}
                          {\varphi_{A}(C_{d_{1}}(x_{d_{4}}))\varphi_{A}(C_{d_{2}}(x_{d_{3}}))}
                          ,
                     \frac{\varphi_{A}(C_{d_{1}}(x_{d_{2}}))\varphi_{A}(C_{d_{3}}(x_{d_{4}}))}
                          {\varphi_{A}(C_{d_{1}}(x_{d_{4}}))\varphi_{A}(C_{d_{3}}(x_{d_{2}}))}
               \right) \\
               &\text{is not an element of the set}\left\lbrace(1,1),(1,-1),(-1,1)\right\rbrace \\
              \end{aligned}
              \end{equation}
              where $\{x_{d_{j}}\}=(C_{d_{1}}\cup C_{d_{2}}\cup C_{d_{3}}\cup C_{d_{4}})\setminus C_{d_{j}}$. 
              For $i\neq j$, 
              $\varphi_{A}(C_{d_{i}}(x_{d_{j}}))$ is the evaluation of the weak phirotope $\varphi_{A}$ on the ordered 
              $d\text{-uple}$ defined by the identity 
              $C_{d_{i}}(x_{d_{j}})=C_{d_{i}}\setminus\left\lbrace x_{d_{j}}\right\rbrace$.
              Notice that $U_{d}(m)$ is the uniform matroid, so that $C_{d_{i}}(x_{d_{j}})$ is a basis for all $i\neq j$,  therefore  
              $\varphi_{A}(C_{d_{i}}(x_{d_{j}}))\neq0$. Hence, the expressions in \eqref{dadostar} are well defined.
 \end{enumerate}
 This can be done as follows. Let $\mathcal U$ denote the space of all complex $d\times m$ matrices of the form
 \begin{equation*}
   A=\left(
          \begin{array}{cccc|cccc}
             1   & \quad  & \quad  & \quad &    1   & \ast   & \cdots  & \ast  \\
           \quad & \ddots & 0 & \quad & \vdots &   \vdots      & \quad   &  \vdots     \\
           \quad & 0  & \ddots & \quad &    1   & \ast & \cdots  & \ast \\
           \quad & \quad  & \quad  &   1   &    1   &      1        & \cdots  &   1         \\
          \end{array}
    \right)
 \end{equation*}
 and whose associated matroid is $U_d(m)$.
 Set $N=n_{U_{d}(m)}$ 
 and consider the function $F:\mathcal{U}\longrightarrow\mathbb{C}^{2N}$ whose components are given by pairs
 \begin{equation}\label{dadostarskyantos}
 \left(
  \frac{\det A_{(C_{d_{1}}(x_{d_{3}}))}\det A_{(C_{d_{2}}(x_{d_{4}}))}}
       {\det A_{(C_{d_{1}}(x_{d_{4}}))}\det A_{(C_{d_{2}}(x_{d_{3}}))}}
                          ,
  \frac{\det A_{(C_{d_{1}}(x_{d_{2}}))}\det A_{(C_{d_{3}}(x_{d_{4}}))}}
       {\det A_{(C_{d_{1}}(x_{d_{4}}))}\det A_{(C_{d_{3}}(x_{d_{2}}))}}
       \right)
 \end{equation}
 one for each family of circuits as in $(A2)$, were
 $A_{(C_{d_{i}}(x_{d_{j}}))}$ denotes the $d\times d$ submatrix of $A$
with columns indexed by the ordered 
 $d\text{-uple}$ $C_{d_{i}}(x_{d_{j}})$. The same argument as in \eqref{dadostar} above shows that the expressions 
 in \eqref{dadostarskyantos} are well defined.
 By Corollary \ref{cor:nonC} the components of the (holomorphic) map $F$ are all
 non-constant over the non-empty, connected and open set $\mathcal U$ (Lemma \ref{OpenConnectedSide}) and thus, by Lemma \ref{HoloSide},
 there exists $\widetilde{A}\in\mathcal{U}$ such that for each family $C_{d_1}, \ldots, C_{d_4}$ of circuits appearing in (A2)
 \begin{equation*}
  \left(
  \frac{\det \widetilde{A}_{(C_{d_{1}}(x_{d_{3}}))}\det \widetilde{A}_{(C_{d_{2}}(x_{d_{4}}))}}
       {\det \widetilde{A}_{(C_{d_{1}}(x_{d_{4}}))}\det \widetilde{A}_{(C_{d_{2}}(x_{d_{3}}))}}
                          ,
  \frac{\det \widetilde{A}_{(C_{d_{1}}(x_{d_{2}}))}\det \widetilde{A}_{(C_{d_{3}}(x_{d_{4}}))}}
       {\det \widetilde{A}_{(C_{d_{1}}(x_{d_{4}}))}\det \widetilde{A}_{(C_{d_{3}}(x_{d_{2}}))}}
  \right)
  \in
  \left(
  \mathbb{C}\setminus\mathbb{R}
  \right)^{2}.
 \end{equation*} 
 Then, $\varphi_{\widetilde{A}}$ satisfies \eqref{dadostar} by definition, and the underlying matroid of 
 $\varphi_{\widetilde{A}}$ is $U_{d}(m)$ since $\widetilde{A}\in\mathcal{U}$. 
\end{proof}

  \subsection{Uniform central hyperplane arrangements}\label{Section9}
       As a further application of our methods, and as a concrete token of the fact that weak phased matroids encode significant 
topological properties of arrangements, we now exploit some elementary topological properties of the realization
space of the uniform matroid over $\mathbb{C}$ in order to prove that complex central hyperplane arrangements with same
underlying uniform matroid are lattice-isotopic, hence their complement manifolds are diffeomorphic via Randell's isotopy theorem. This improves on \cite[Theorem 2]{Hat75}, 
where the homotopy type of these arrangements is discussed. Our proof is an adaptation of that of
Lemma \ref{OpenConnectedSide}.

First, we provide a quick review of some basic definitions and results about arrangements to set the appropriate 
background and notations. We refer to the book \cite{OT92} for a  general treatment of hyperplane arrangements, 
and to \cite{FR00} for a survey of their homotopy theory.

A finite collection of affine subspaces 
$\mathcal{A}=\{H_{1},\ldots,H_{m}\}$ in 
$\mathbb{C}^{d}$ is an \emph{arrangement}. Its \emph{complement manifold} 
$M(\mathcal{A})$ is the complement of the union of the $H_{i}$ in $\mathbb{C}^{d}$. 
If each $H_{i}$ is a linear subspace the arrangement is called 
\emph{central}.

Given an arrangement $\mathcal{A}=\{H_{1},\ldots, H_{m}\}$ we assign a \emph{rank} 
to every $I\subseteq\left[m\right]$ by setting
\begin{equation*}
 \operatorname{rk}_{\mathcal{A}}(I) = \operatorname{codim}\bigcap_{i\in I}H_{i}
\end{equation*}
(where we define the empty set to have codimension $d+1$).

Two arrangements $\mathcal{A}=\{H_{1},\ldots, H_{m}\}$ and $\mathcal{B}=\{K_{1},\ldots,K_{m}\}$  
have the same \emph{combinatorial type} if the functions $\operatorname{rk}_{\mathcal A}$ and
$\operatorname{rk}_{\mathcal B}$ are equal.

Given an open interval $(a,b)\subseteq\mathbb{R}$, a \emph{smooth one-parameter family} of arrangements 
is a collection $\{\mathcal{A}_{t}\}_{t\in(a,b)}$ of arrangements 
$\mathcal{A}_{t}=\{H_{1}(t),\ldots,H_{m}(t)\}$ in $\mathbb{C}^{d}$ such that there exist smooth functions from 
$(a,b)$ to $\mathbb{C}$ for the coefficients of the defining equations of the subspaces $H_{i}(t)$.
With a slight abuse of notation we write $\mathcal{A}_{t}$ for $\{\mathcal{A}_{t}\}_{t\in(a,b)}$, omitting the interval 
of parameters $(a,b)$.

\begin{definition}
 A smooth one-parameter family of arrangements 
 $\mathcal{A}_{t}$ 
 is an \emph{arrangement isotopy} if for any $t_{1}$ and $t_{2}$ the arrangements 
 $\mathcal{A}_{t_{1}}$ and $\mathcal{A}_{t_{2}}$ have the same combinatorial type. 
 In this case we say that $\mathcal{A}_{t_{1}}$ and $\mathcal{A}_{t_{2}}$ are \emph{isotopic arrangements}.
\end{definition}

The following theorem, sometimes referred to as ``isotopy theorem'', was proved by Richard Randell.

\begin{theorem}[{\cite[p.\ 556]{Ran89}}]\label{thm:RIT}
 If $\mathcal{A}_{t_1}$ and $\mathcal{A}_{t_1}$ are isotopic arrangements, then  
 the complement manifolds $M(\mathcal{A}_{t_{1}})$ and $M(\mathcal{A}_{t_{2}})$ are diffeomorphic.
\end{theorem}

An arrangement of codimension $1$ subspaces is called a \emph{hyperplane} arrangement.
If $\mathcal{A}=\{H_{1},\ldots,H_{m}\}$ in $\mathbb{C}^{d}$ is a central hyperplane arrangement, 
the function $\operatorname{rk}_{\mathcal A}$ is the rank function of a matroid $M_{\mathcal{A}}$ on the ground set $[m]$. 
The \emph{rank} of $\mathcal{A}$ is by definition the rank of $M_{\mathcal{A}}$.
Thus, a smooth one-parameter family 
$\mathcal{A}_{t}$ of central hyperplane arrangements is a 
lattice isotopy if and  only if $M_{\mathcal{A}_{t_{1}}}=M_{\mathcal{A}_{t_{2}}}$ for any $t_{1}$ and $t_{2}$.

\begin{theorem}\label{UHA}
 Let $\mathcal{A}=\{H_{1},\ldots,H_{m}\}$ and $\mathcal{B}=\{K_{1},\ldots,K_{m}\}$ be central hyperplane 
 arrangements in $\mathbb{C}^{d}$ with same underlying uniform matroid. Then, 
 $\mathcal{A}$ and $\mathcal{B}$ are isotopic arrangements.
\end{theorem}

\begin{proof}
Obviously the only case of interest is when $d\geq1$ and $m\geq1$. Moreover, if $1\leq m<d$ we can reduce to the case 
of arrangements $\widetilde{\mathcal{A}}=\{\widetilde{H}_{1},\ldots,\widetilde{H}_{m}\}$ and 
$\widetilde{\mathcal{B}}=\{\widetilde{K}_{1},\ldots,\widetilde{K}_{m}\}$
in $\mathbb{C}^{m}$,  where
$\widetilde{H}_{j}$ is the quotient of $H_{j}$ by $\bigcap_{r=1}^{m}H_{r}$ and similarly 
$\widetilde{K}_{j}$ is the quotient of $K_{j}$ by $\bigcap_{r=1}^{m}K_{r}$.

Thus, it suffices to prove our statement for $1\leq d\leq m$.
 Pick linear forms $\alpha_{i}$ and $\beta_{i}$ such that 
 $H_{i}=\ker\alpha_{i}$ and $K_{i}=\ker\beta_{i}$. Let us denote by $\alpha_{i}^{j}$ and $\beta_{i}^{j}$ the 
 $j\text{-th}$ component of $\alpha_{i}$ and $\beta_{i}$, respectively. Set $A=(\alpha_{i}^{j})^{T}$ and 
 $B=(\beta_{i}^{j})^{T}$. Now, consider $\mathcal S_{d,m}$, the space of all corresponding matrices over $\mathbb C$. The matrices $A$ and $B$ belong to 
 $\mathcal S_{d,m}$. Hence, it is enough to show that 
  $\mathcal{S}_{d,m}$  
 is an open connected subspace of $M_{d,m}(\mathbb{C})$.
 The same arguments in the proof of Lemma \ref{OpenConnectedSide}  (with $\mathcal S_{d,m}$ in place of $\mathcal U$) imply that this is indeed the case.
\end{proof}

Via Theorem \ref{thm:RIT} we immediately obtain the following.

\begin{corollary}\label{cor:UHA}
Let $\mathcal{A}$ and $\mathcal{B}$ be central hyperplane 
 arrangements in $\mathbb{C}^{d}$ with same underlying uniform matroid. 
 Then, the complement manifolds $M(\mathcal{A})$ and $M(\mathcal{B})$ are diffeomorphic. 
\end{corollary}

\appendix

 \section{Computations for Sections 3 and 4}
  \subsection{Computations for the proof of Lemma \ref{keylemma}}\label{appendix1}
Up to th
equivalences  $\sim_{\tau}$ and $\cong_{\tau}$ (see Definition \ref{sim} and Definition \ref{cong}), we can assume that 
\begin{equation*}
 \begin{array}{llll}
  \gamma C_{1}(2)=1;  & \gamma C_{1}(3)=1;  & \gamma C_{1}(4)=1 & \gamma C_{2}(1)=1;   \\
  \gamma C_{3}(1)=-1; & \gamma C_{3}(2)=1;  & \gamma C_{4}(1)=1 & \delta D_{1}(4)=1;   \\
  \delta D_{2}(4)=-1; & \delta D_{3}(4)=-1; & \multicolumn{2}{l}{\delta D_{4}(3)=-1.}  \\
 \end{array}
\end{equation*}

Hence, from the definition of $\mathbb{P}_{\mathbb{H},f,g,\mu}$ we can use \eqref{base} and Proposition \ref{algbas}
to compute the other values of $\gamma$ and $\delta$.

\begin{equation*}
\begin{aligned}
\delta D_{2}(3)&=1\text{ since}\\
    -1&=\mathbb{P}_{\mathbb{H},f,g,\mu}(C_{1},D_{2},3,4)=
                                        \frac{\gamma C_{1}(3)\odot\tau(\delta D_{2}(3))}
                                             {\gamma C_{1}(4)\odot\tau(\delta D_{2}(4))}=
                                        \frac{1\odot\tau(\delta D_{2}(3))}{1\odot-1};\\ 
\end{aligned}
\end{equation*}

\begin{equation*}
 \begin{aligned}
   \delta D_{3}(2)&=1\text{ since}\\
     -1&=\mathbb{P}_{\mathbb{H},f,g,\mu}(C_{1},D_{3},2,4)=
                                        \frac{\gamma C_{1}(2)\odot\tau(\delta D_{3}(2))}
                                             {\gamma C_{1}(4)\odot\tau(\delta D_{3}(4))}=
                                        \frac{1\odot\tau(\delta D_{3}(2))}{1\odot-1};\\
 \end{aligned}
\end{equation*}

\begin{equation*}
 \begin{aligned}
  \delta D_{4}(2)&=1\text{ since}\\ 
  -1&=\mathbb{P}_{\mathbb{H},f,g,\mu}(C_{1},D_{4},2,3)=
                                        \frac{\gamma C_{1}(2)\odot\tau(\delta D_{4}(2))}
                                             {\gamma C_{1}(3)\odot\tau(\delta D_{4}(3))}=
                                        \frac{1\odot\tau(\delta D_{4}(2))}{1\odot-1};\\
 \end{aligned}
\end{equation*}

\begin{equation*}
 \begin{aligned}
   \delta D_{4}(1)&=1\text{ since}\\
  -1&=\mathbb{P}_{\mathbb{H},f,g,\mu}(C_{3},D_{4},1,2)=
                                        \frac{\gamma C_{3}(1)\odot\tau(\delta D_{4}(1))}
                                             {\gamma C_{3}(2)\odot\tau(\delta D_{4}(2))}=
                                        \frac{-1\odot\tau(\delta D_{4}(1))}{1\odot1};\\                                        
 \end{aligned}
\end{equation*}

\begin{equation*}
 \begin{aligned}
   \gamma C_{2}(3)&=1\text{ since}\\
  -1&=\mathbb{P}_{\mathbb{H},f,g,\mu}(C_{2},D_{4},1,3)=
                                        \frac{\gamma C_{2}(1)\odot\tau(\delta D_{4}(1))}
                                             {\gamma C_{2}(3)\odot\tau(\delta D_{4}(3))}=
                                        \frac{1\odot1}{\gamma C_{2}(3)\odot-1};\\                                        
 \end{aligned}
\end{equation*}

\begin{equation*}
 \begin{aligned}
  \gamma C_{2}(4)&=-f\text{ since}\\
  f^{-1}&=\mathbb{P}_{\mathbb{H},f,g,\mu}(C_{2},D_{2},3,4)=
                                        \frac{\gamma C_{2}(3)\odot\tau(\delta D_{2}(3))}
                                             {\gamma C_{2}(4)\odot\tau(\delta D_{2}(4))}=
                                        \frac{1\odot1}{\gamma C_{2}(4)\odot-1};\\                                  
 \end{aligned}
\end{equation*}

\begin{equation*}
 \begin{aligned}
  \gamma C_{3}(4)&=-g\text{ since}\\
  g^{-1}&=\mathbb{P}_{\mathbb{H},f,g,\mu}(C_{3},D_{3},2,4)=
                                        \frac{\gamma C_{3}(2)\odot\tau(\delta D_{3}(2))}
                                             {\gamma C_{3}(4)\odot\tau(\delta D_{3}(4))}=
                                        \frac{1\odot1}{\gamma C_{3}(4)\odot-1};\\                                     
 \end{aligned}
\end{equation*}

\begin{equation*}
 \begin{aligned}
  \gamma C_{4}(2)&=\mu\odot f\text{ since}\\
  \mu^{-1}\odot f^{-1}&=\mathbb{P}_{\mathbb{H},f,g,\mu}(C_{4},D_{4},1,2)=
                                        \frac{\gamma C_{4}(1)\odot\tau(\delta D_{4}(1))}
                                             {\gamma C_{4}(2)\odot\tau(\delta D_{4}(2))}=
                                        \frac{1\odot1}{\gamma C_{4}(2)\odot1};\\                                     
 \end{aligned}
\end{equation*}

\begin{equation*}
 \begin{aligned}
  \gamma C_{4}(3)&=-\mu\odot g\text{ since}\\
  f\odot g^{-1}&=\mathbb{P}_{\mathbb{H},f,g,\mu}(C_{4},D_{4},2,3)=
                                        \frac{\gamma C_{4}(2)\odot\tau(\delta D_{4}(2))}
                                             {\gamma C_{4}(3)\odot\tau(\delta D_{4}(3))}=
                                        \frac{\mu\odot f\odot1}{\gamma C_{4}(3)\odot-1};\\                                     
 \end{aligned}
\end{equation*}

\begin{equation*}
 \begin{aligned}
  \delta D_{1}(2)&=\tau(g)\text{ since}\\
  g&=\mathbb{P}_{\mathbb{H},f,g,\mu}(C_{1},D_{1},2,4)=
                                        \frac{\gamma C_{1}(2)\odot\tau(\delta D_{1}(2))}
                                             {\gamma C_{1}(4)\odot\tau(\delta D_{1}(4))}=
                                        \frac{1\odot\tau(\delta D_{1}(2))}{1\odot1};\\                                      
 \end{aligned}
\end{equation*}

\begin{equation*}
 \begin{aligned}
  \delta D_{1}(3)&=\tau(f)\text{ since}\\   
  f&=\mathbb{P}_{\mathbb{H},f,g,\mu}(C_{1},D_{1},3,4)=
                                        \frac{\gamma C_{1}(3)\odot\tau(\delta D_{1}(3))}
                                             {\gamma C_{1}(4)\odot\tau(\delta D_{1}(4))}=
                                        \frac{1\odot\tau(\delta D_{1}(3))}{1\odot1};\\                  
 \end{aligned}
\end{equation*}

\begin{equation*}
 \begin{aligned}
  \delta D_{2}(1)&=\tau(\mu)\odot\tau(g)\text{ since}\\ 
  \mu\odot g&=\mathbb{P}_{\mathbb{H},f,g,\mu}(C_{2},D_{2},1,3)=
                                         \frac{\gamma C_{2}(1)\odot\tau(\delta D_{2}(1))}
                                              {\gamma C_{2}(3)\odot\tau(\delta D_{2}(3))}=
                                         \frac{1\odot\tau(\delta D_{2}(1))}{1\odot1};\\
 \end{aligned}
\end{equation*}

\begin{equation*}
 \begin{aligned}
  \delta D_{3}(1)&=-\tau(\mu)\odot\tau(f)\text{ since}\\
   \mu\odot f&=\mathbb{P}_{\mathbb{H},f,g,\mu}(C_{3},D_{3},1,2)=
                                        \frac{\gamma C_{3}(1)\odot\tau(\delta D_{3}(1))}
                                             {\gamma C_{3}(2)\odot\tau(\delta D_{3}(2))}=
                                        \frac{-1\odot\tau(\delta D_{3}(1))}{1\odot1}.\\
 \end{aligned}
\end{equation*}

\subsection{Technical lemma for Theorem~\ref{onetoone}}

\begin{lemma}\label{TLL}
 Let 
 $\eta_{(C_{i_{1}}C_{i_{2}}|C_{i_{3}}C_{i_{4}})}$ 
 be $\mathbb{H}^{\ast}\text{-valued}$ functions 
 defined for circuits $C_{i_{1}}$, $C_{i_{2}}$, $C_{i_{3}}$, $C_{i_{4}}\in\mathfrak{C}$ 
 such that
 \begin{itemize}
  \item $\operatorname{dim}(C_{i_{1}}\cup C_{i_{2}}\cup C_{i_{3}}\cup C_{i_{4}})=1$;
  \item $\{C_{i_{1}},C_{i_{2}}\}\cap\{C_{i_{3}},C_{i_{4}}\}=\emptyset$;
 \end{itemize}
 and satisfying
 \begin{enumerate}[label=(pp\arabic{*})]
  \item \label{pp1} $\eta_{(C_{i_{1}}C_{i_{2}}|C_{i_{3}}C_{i_{3}})}=1$;
  \item \label{pp2} $\eta_{(C_{i_{1}}C_{i_{2}}|C_{i_{3}}C_{i_{4}})}=\psi_{(C_{i_{3}}C_{i_{4}}|C_{i_{1}}C_{i_{2}})}$;
  \item \label{pp3} $\eta_{(C_{i_{1}}C_{i_{2}}|C_{i_{3}}C_{i_{4}})}=\psi_{(C_{i_{1}}C_{i_{2}}|C_{i_{4}}C_{i_{3}})}^{-1}$.
 \end{enumerate}
 Consider a permutation 
 \begin{equation*}
  \sigma=\left(\begin{array}{cccc}
                     1    &     2     &     3     &     4     \\
                \sigma(1) & \sigma(2) & \sigma(3) & \sigma(4) \\
               \end{array}
         \right)\in\mathcal{S}_{4}
 \end{equation*}
Then, relation 
\begin{equation*}
 1\in\eta_{(C_{i_{1}}C_{i_{2}}|C_{i_{3}}C_{i_{4}})}\boxplus\eta_{(C_{i_{4}}C_{i_{2}}|C_{i_{3}}C_{i_{1}})}
\end{equation*}
is equivalent to
\begin{equation*}
 1\in\eta_{(C_{i_{\sigma(1)}}C_{i_{\sigma(2)}}|C_{i_{\sigma(3)}}C_{i_{\sigma(4)}})}\boxplus
     \eta_{(C_{i_{\sigma(4)}}C_{i_{\sigma(2)}}|C_{i_{\sigma(3)}}C_{i_{\sigma(1)}})}
\end{equation*}
\end{lemma}

\begin{proof}
 It is enough to verify our statement for $\sigma$ being an adjacent transposition, since they generate the group $\mathcal{S}_{4}$.
 This can be done by a direct check using properties \ref{pp1}, \ref{pp2}, \ref{pp3}. We illustrate the check for the adjacent transposition $\left(\begin{array}{cc}1 & 2 \\ 2 & 1\end{array}\right)$ leaving the others to the reader.
        \begin{equation*}
         \begin{array}{l}
          1\in\eta_{(C_{i_{2}}C_{i_{1}}|C_{i_{3}}C_{i_{4}})}\boxplus\eta_{(C_{i_{4}}C_{i_{1}}|C_{i_{3}}C_{i_{2}})}
          \Longleftrightarrow \\
          1\in\eta_{(C_{i_{1}}C_{i_{2}}|C_{i_{3}}C_{i_{4}})}^{-1}\boxplus\eta_{(C_{i_{1}}C_{i_{4}}|C_{i_{2}}C_{i_{3}})}
          \Longleftrightarrow \\
          1\in\eta_{(C_{i_{1}}C_{i_{2}}|C_{i_{3}}C_{i_{4}})}^{-1}
              \left(
               -\eta_{(C_{i_{1}}C_{i_{2}}|C_{i_{3}}C_{i_{4}})}^{-1}\odot\eta_{(C_{i_{1}}C_{i_{3}}|C_{i_{2}}C_{i_{4}})}
              \right). \\
         \end{array}
        \end{equation*}
        Multiplying both sides by $-\eta_{(C_{i_{1}}C_{i_{2}}|C_{i_{3}}C_{i_{4}})}$ 
        and using the distributive law this is the same as 
        \begin{equation*}
         -\eta_{(C_{i_{1}}C_{i_{2}}|C_{i_{3}}C_{i_{4}})}\in-1\boxplus\eta_{(C_{i_{1}}C_{i_{3}}|C_{i_{2}}C_{i_{4}})}.
        \end{equation*}
        From \ref{H3} we then find 
        \begin{equation*}
         \begin{array}{l}
          1\in\eta_{(C_{i_{1}}C_{i_{2}}|C_{i_{3}}C_{i_{4}})}\boxplus\eta_{(C_{i_{1}}C_{i_{3}}|C_{i_{2}}C_{i_{4}})}
          \Longleftrightarrow \\
          1\in\eta_{(C_{i_{1}}C_{i_{2}}|C_{i_{3}}C_{i_{4}})}\boxplus\eta_{(C_{i_{4}}C_{i_{2}}|C_{i_{3}}C_{i_{1}})} \\
         \end{array}
        \end{equation*}
        
\end{proof}

\section{On Tutte groups of matroids}\label{section4}\label{App:TG}

We recall the inner Tutte group of a matroid and describe different presentations of this group by generators and relations. The material is borrowed mainly from \cite{DW89, DW90, GRS95,Sai16}.

\subsection{Definition of the inner Tutte group}\label{section4a}

From now on, let $M$ be a matroid with circuits set $\mathfrak{C}$ and cocircuits set $\mathfrak{C}^{\ast}$.
In this section we consider sets of the form 
$F=C_{1}\cup\cdots\cup C_{k}$ with $C_{i}\in\mathfrak{C}$.
Given a maximal chain
\begin{equation*}
 \emptyset\subset F_{0}\subset F_{1}\subset\cdots\subset F_{d}=F
\end{equation*}
the number $d$ depends 
only on $F$ and it is called the \emph{dimension} of $F$. We denote it by
$\operatorname{dim}(F)$. Notice that 
\begin{equation*}
 \operatorname{dim}(F)=|F|-\operatorname{rk}(F)-1.
\end{equation*}

\begin{definition}\label{tutteCDdef}
 The group $\mathbb{T}_{M}^{\mathfrak{C},\mathfrak{C}^{\ast}}$ 
 is defined to be the multiplicative abelian group with 
 formal generators given by the symbols 
 \begin{itemize}
  \item $\epsilon_{M}$;
  \item $C(x)$ for $C\in\mathfrak{C}$ and $x\in C$;
  \item $D(y)$ for $D\in\mathfrak{C}^{\ast}$ and $y\in D$;
 \end{itemize}
 with relations
 \begin{itemize}
  \item $\epsilon_{M}^{2}=1$;
  \item $C(x)D(x)=\epsilon_{M}C(y)D(y)$ for $C\in\mathfrak{C}$, $D\in\mathfrak{C}^{\ast}$ with $\{x,y\}=C\cap D$.
 \end{itemize}
\end{definition}

\begin{definition}[Tutte group]\label{tuttedef}
 The \emph{Tutte group} $\mathbb{T}_{M}$ is 
 the subgroup of $\mathbb{T}_{M}^{\mathfrak{C},\mathfrak{C}^{\ast}}$ generated by 
 \begin{itemize}
  \item $\epsilon_{M}$;
  \item $C(x)C(y)^{-1}$ for $C\in\mathfrak{C}$, $x$, $y\in C$;
  \item $D(x)D(y)^{-1}$ for $D\in\mathfrak{C}^{\ast}$, $x$, $y\in D$.
 \end{itemize}
\end{definition}

\begin{definition}[Inner Tutte group]\label{innerdef}
 Let us consider the group homomorphism
 $\Lambda_{M}:\mathbb{T}_{M}^{\mathfrak{C},\mathfrak{C}^{\ast}}\longrightarrow
  \mathbb{Z}^{|E|}\times\mathbb{Z}^{|\mathfrak{C}|}\times\mathbb{Z}^{|\mathfrak{C}^{\ast}|}$ defined by 
 \begin{itemize}
  \item $\epsilon_{M}\mapsto0$;
  \item $C(x)\mapsto(\mathds{1}_{x},\mathds{1}_{C},0)$;
  \item $D(y)\mapsto(-\mathds{1}_{y},0,\mathds{1}_{D})$;
 \end{itemize}
 where $\mathds{1}$ is the indicator function.
 The \emph{inner Tutte group} 
 $\mathbb{T}_{M}^{(0)}$ of $M$ is the kernel of  the homomorphism $\Lambda_{M}$. 
\end{definition}

\begin{remark}\label{easygen}
 According to \cite[Proposition 2.9 (i)]{Wen89a} the inner Tutte group of a matroid $M$ is generated by $\epsilon_{M}$ and 
 all products of the form 
 \begin{equation*}
  \frac{C_{1}(x)C_{2}(y)}{C_{1}(y)C_{2}(x)}
 \end{equation*}
 for circuits $C_{1}$, $C_{2}\in\mathfrak{C}$ with $\operatorname{dim}(C_{1}\cup C_{2})=1$ and $x$, $y\in C_{1}\cap C_{2}$.
\end{remark}

One can see that 
$\mathbb{T}_{M}^{(0)}\vartriangleleft\mathbb{T}_{M}\vartriangleleft\mathbb{T}_{M}^{\mathfrak{C},\mathfrak{C}^{\ast}}$. 
If $c_{M}$ is the number of connected components of $M$, with \cite[Theorem 1.5]{DW89} we have
\begin{equation}\label{ZFree}
 \begin{array}{ll}
  \mathbb{T}_{M}\cong\mathbb{T}_{M}^{(0)}
                     \times
                     \mathbb{Z}^{|E|-c_{M}}; & 
  \mathbb{T}_{M}^{\mathfrak{C},\mathfrak{C}^{\ast}}\cong\mathbb{T}_{M}^{(0)}
                                                        \times
                                                        \mathbb{Z}^{|E|-c_{M}}
                                                        \times
                                                        \mathbb{Z}^{|\mathfrak{C}|}
                                                        \times
                                                        \mathbb{Z}^{|\mathfrak{C}^{\ast}|}.
 \end{array}                                                         
\end{equation}

Thus, any of these groups is known as soon as we know 
$\mathbb{T}_{M}^{(0)}$.

\subsection{Presentations of the inner Tutte group}\label{section4b}

Throughout this section we assume that, for a matroid $M$ with set of circuits $\mathfrak{C}$, an enumeration 
$\{C_{j}\}_{j\in J}$ of $\mathfrak{C}$ and an arbitrary total order $<_{J}$ on $J$ are given. 

\begin{theorem}[{\cite[Theorem 2.1]{Sai16}}]\label{isoinnerTuttegroups}
 The groups $\mathbb{T}_{M}^{(1)}$, $\mathbb{T}_{M}^{(2)}$ and $\mathcal{T}_{M,<_{J}}^{(0)}$ introduced below in 
 Definition \ref{innerTuttegroupTM1}, Definition \ref{innerTuttegroupTM2} and Definition \ref{innerTuttegroupTMJ0}
 are all isomorphic to the inner Tutte group $\mathbb{T}_{M}^{(0)}$.
\end{theorem}

\begin{definition}\label{innerTuttegroupTM1}
Given a matroid $M$ the group $\mathbb{T}_{M}^{(1)}$ 
is the finitely generated abelian group with generators
\begin{enumerate}[label=(g\arabic{*})]
 \item \label{gen1} $\sigma_{M}$;
 \item \label{gen2} $|C\enskip D,x\enskip y|$, 
                    where $C\in\mathfrak{C}$, $D\in\mathfrak{C}^{\ast}$,  and $x$, $y\in C\cap D$;
\end{enumerate}
and relations
\begin{enumerate}[label=(r\arabic{*})]
 \item \label{rel1} $\sigma_{M}^{2}=1$;
 \item \label{rel2} $|C\enskip D,x\enskip x|=1_{\mathbb{T}_{M}^{(1)}}$;
 \item \label{rel3} $|C\enskip D,x\enskip y||C\enskip D,y\enskip z||C\enskip D,z\enskip x|=1_{\mathbb{T}_{M}^{(1)}}$;
 \item \label{rel4} $|C_{1}\enskip D_{1},x\enskip y||C_{2}\enskip D_{2},x\enskip y|=
                    |C_{1}\enskip D_{2},x\enskip y||C_{2}\enskip D_{1},x\enskip y|$;
 \item \label{rel5} $|C\enskip D,x\enskip y|=\sigma_{M}$ if $C\cap D=\{x,y\}$;
 \item \label{rel6} $|C_{1}\enskip D_{1},x_{2}\enskip x_{3}||C_{2}\enskip D_{2},x_{1}\enskip x_{4}|$ 
                  if 
                  \begin{itemize}
                   \item $\operatorname{dim}(C_{1}\cup C_{2})=1$;
                   \item $C_{1}\cap D_{1}=\{x_{2},x_{3},x_{4}\}$;
                   \item $C_{2}\cap D_{2}=\{x_{1},x_{3},x_{4}\}$;
                   \item $C_{1}\cap D_{2}=C_{2}\cap D_{1}=\{x_{3},x_{4}\}$.
                  \end{itemize}
\end{enumerate}
\end{definition}

\begin{remark}[{\cite[Theorem 3]{GRS95}}]
Notice that the ``natural" assignment $\sigma_M\mapsto \epsilon_M$ and $\vert C,D,x,y \vert \mapsto C(x)D(x)C(y)^{-1}D(y)^{-1}$ defines an isomorphism between $\mathbb{T}_{M}^{(1)}$ and $\mathbb{T}_{M}^{(0)}$.
\end{remark}

\begin{definition}\label{innerTuttegroupTM2}
 Given a matroid $M$ the group $\mathbb{T}_{M}^{(2)}$ is the finitely generated abelian group 
 with formal generators given by the symbols 
\begin{enumerate}[label=(G\arabic{*})]
 \item \label{G1} $\xi_{M}$;
 \item \label{G2} $[C_{i_{1}} C_{i_{2}}|C_{i_{3}} C_{i_{4}}]$, 
                  where $C_{i_{1}}$, $C_{i_{2}}$, $C_{i_{3}}$, $C_{i_{4}}\in\mathfrak{C}$ are circuits of $M$ such that
                  $L=C_{i_{1}}\cup C_{i_{2}}\cup C_{i_{3}}\cup C_{i_{4}}=C_{i_{k}}\cup C_{i_{l}}$ for $k=1,2$, $l=3,4$, 
                  $\operatorname{dim}(L)=1$;
\end{enumerate}
and relations
\begin{enumerate}[label=(R\arabic{*})]
 \item \label{R1} $\xi_{M}^{2}=1_{\mathbb{T}_{M}^{(2)}}$;
 \item \label{R2} $\xi_{M}=1_{\mathbb{T}_{M}^{(2)}}$ if $M$ has minors of Fano or dual-Fano type;
 \item \label{R3} $[C_{i_{1}}C_{i_{2}}|C_{i_{3}}C_{i_{3}}]=1_{\mathbb{T}_{M}^{(2)}}$;
 \item \label{R4} $[C_{i_{1}}C_{i_{2}}|C_{i_{3}}C_{i_{3}}]=[C_{i_{3}}C_{i_{4}}|C_{i_{1}}C_{i_{2}}]$;
 \item \label{R5} $[C_{i_{1}}C_{i_{2}}|C_{i_{3}}C_{i_{4}}]
                  [C_{i_{1}}C_{i_{2}}|C_{i_{4}}C_{i_{5}}]
                  [C_{i_{1}}C_{i_{2}}|C_{i_{5}}C_{i_{3}}]=1_{\mathbb{T}_{M}^{(2)}}$;
 \item \label{R6} $[C_{i_{1}}C_{i_{2}}|C_{i_{3}}C_{i_{4}}]
                  [C_{i_{1}}C_{i_{4}}|C_{i_{2}}C_{i_{3}}]
                  [C_{i_{1}}C_{i_{3}}|C_{i_{4}}C_{i_{2}}]=\xi_{M}$;
 \item \label{R7}$[C_{i_{1}}C_{i_{2}}|C_{i_{6}}C_{i_{9}}]
                  [C_{i_{2}}C_{i_{3}}|C_{i_{4}}C_{i_{7}}]
                  [C_{i_{3}}C_{i_{1}}|C_{i_{5}}C_{i_{8}}]=1_{\mathbb{T}_{M}^{(2)}}$ 
                for every family of circuits $\{C_{i_{1}},\ldots,C_{i_{9}}\}\subseteq\mathfrak{C}$ such that:
               \begin{itemize}
                 \item $\operatorname{dim}(L_{i_{p}})=1$ for $L_{i_p}=C_{i_{q}}\cup C_{i_{r}}$, 
                       where $\{p,q,r\}=\{1,2,3\}$;
                 \item $\operatorname{dim}(P)=2$ where $P=C_{i_{1}}\cup C_{i_{2}}\cup C_{i_{3}}$;
                 \item $C_{i_{s+3}},C_{i_{s+6}}\subseteq L_{i_{s}}$ for $s=1,2,3$;
                 \item $\operatorname{dim}(L_{i_{h}})=1$ for $L_{i_{h}}=C_{i_{3+h}}\cup C_{i_{4+h}}\cup C_{i_{5+h}}$, 
                       $h\in\{1,4\}$;
                 \item $\{C_{i_{1}},C_{i_{2}},C_{i_{3}}\}\cap\{C_{i_{4}},\ldots,C_{i_{9}}\}=\emptyset$.
                \end{itemize}
\end{enumerate}
\end{definition}

\begin{remark}[{\cite[Theorem 4]{GRS95}}]\label{iso20}
The assignment $\xi_M\mapsto \epsilon_M$ and $[C_1,C_2\vert C_3,C_4]\mapsto C_1(x_3)C_1(x_4)^{-1} C_2(x_4)C_2(x_3)^{-1}$, where $x_i\in (\bigcup_{j\in [4]} C_j)\setminus C_i$, defines an isomorphism between $\mathbb{T}_{M}^{(2)}$ and $\mathbb{T}_{M}^{(0)}$.
\end{remark}

\begin{definition}[{\cite[Definition 2.1]{Sai16}}]\label{innerTuttegroupTMJ0}
 We denote by $\mathcal{T}_{M,<_{J}}^{(0)}$ 
 the multiplicative abelian group with formal generators given by the symbols
 \begin{enumerate}[label=(Q\arabic{*})]
  \item \label{Q1} $\eta_{M,<_{J}}$;
  \item \label{Q2} $(C_{j_{1}}C_{j_{2}}|C_{j_{3}}C_{j_{4}})$ 
                   where $C_{j_{1}}$, $C_{j_{2}}$, $C_{j_{3}}$, $C_{j_{4}}\in\mathfrak{C}$ are circuits of $M$ such that
                   $\operatorname{dim}(C_{j_{1}}\cup C_{j_{2}}\cup C_{j_{3}}\cup C_{j_{4}})=1$ 
                   and $j_{1}<j_{2}$, $j_{3}<j_{4}$, $j_{1}<j_{3}$;
 \end{enumerate}
 and relations
  \begin{enumerate}[label=(S\arabic{*})]
   \item \label{S1} $\eta_{M,<_{J}}^{2}=1_{\mathcal{T}_{M,<_{J}}^{(0)}}$;
   \item \label{S2} $\eta_{M,<_{J}}=1_{\mathcal{T}_{M,<_{J}}^{(0)}}$ if $M$ has minors of Fano or dual-Fano type;
   \item \label{S3} $(C_{j_{1}}C_{j_{2}}|C_{j_{3}}C_{j_{4}})
                     (C_{j_{1}}C_{j_{4}}|C_{j_{2}}C_{j_{3}})
                     (C_{j_{1}}C_{j_{3}}|C_{j_{2}}C_{j_{4}})^{-1}=\eta_{M,<_{J}}$ 
                    for any family of circuits $\{C_{j_{1}},C_{j_{2}},C_{j_{3}},C_{j_{4}}\}\subseteq\mathfrak{C}$ with               
                    $\operatorname{dim}(C_{j_{1}}\cup C_{j_{2}}\cup C_{j_{3}}\cup C_{j_{4}})=1$ 
                    and $j_{1}<j_{2}<j_{3}<j_{4}$;
  \item \label{S4} \begin{equation*}
                      \left\lbrace
                     \begin{array}{l}
                 (C_{j_{1}}C_{j_{2}}|C_{j_{3}}C_{j_{4}})
                 (C_{j_{1}}C_{j_{2}}|C_{j_{4}}C_{j_{5}})
                 (C_{j_{1}}C_{j_{2}}|C_{j_{3}}C_{j_{5}})^{-1}=1_{\mathcal{T}_{M,<_{J}}^{(0)}}\\
                 
                 (C_{j_{1}}C_{j_{3}}|C_{j_{2}}C_{j_{4}})
                 (C_{j_{1}}C_{j_{3}}|C_{j_{4}}C_{j_{5}})
                 (C_{j_{1}}C_{j_{3}}|C_{j_{2}}C_{j_{5}})^{-1}=1_{\mathcal{T}_{M,<_{J}}^{(0)}}\\
                 
                 (C_{j_{1}}C_{j_{4}}|C_{j_{2}}C_{j_{3}})
                 (C_{j_{1}}C_{j_{4}}|C_{j_{3}}C_{j_{5}})
                 (C_{j_{1}}C_{j_{4}}|C_{j_{2}}C_{j_{5}})^{-1}=1_{\mathcal{T}_{M,<_{J}}^{(0)}}\\
                 
                 (C_{j_{1}}C_{j_{5}}|C_{j_{2}}C_{j_{3}})
                 (C_{j_{1}}C_{j_{5}}|C_{j_{3}}C_{j_{4}})
                 (C_{j_{1}}C_{j_{5}}|C_{j_{2}}C_{j_{4}})^{-1}=1_{\mathcal{T}_{M,<_{J}}^{(0)}}\\
                 
                 (C_{j_{1}}C_{j_{4}}|C_{j_{2}}C_{j_{3}})
                 (C_{j_{2}}C_{j_{3}}|C_{j_{4}}C_{j_{5}})
                 (C_{j_{1}}C_{j_{5}}|C_{j_{2}}C_{j_{3}})^{-1}=1_{\mathcal{T}_{M,<_{J}}^{(0)}}\\
                 
                 (C_{j_{1}}C_{j_{3}}|C_{j_{2}}C_{j_{4}})
                 (C_{j_{2}}C_{j_{4}}|C_{j_{3}}C_{j_{5}})
                 (C_{j_{1}}C_{j_{5}}|C_{j_{2}}C_{j_{3}})^{-1}=1_{\mathcal{T}_{M,<_{J}}^{(0)}}\\
                 
                 (C_{j_{1}}C_{j_{3}}|C_{j_{2}}C_{j_{5}})
                 (C_{j_{2}}C_{j_{5}}|C_{j_{3}}C_{j_{4}})
                 (C_{j_{1}}C_{j_{4}}|C_{j_{2}}C_{j_{5}})^{-1}=1_{\mathcal{T}_{M,<_{J}}^{(0)}}\\
                 
                 (C_{j_{1}}C_{j_{2}}|C_{j_{3}}C_{j_{4}})
                 (C_{j_{2}}C_{j_{5}}|C_{j_{3}}C_{j_{4}})
                 (C_{j_{1}}C_{j_{5}}|C_{j_{3}}C_{j_{4}})^{-1}=1_{\mathcal{T}_{M,<_{J}}^{(0)}}\\
                 
                 (C_{j_{1}}C_{j_{2}}|C_{j_{3}}C_{j_{5}})
                 (C_{j_{2}}C_{j_{4}}|C_{j_{3}}C_{j_{5}})
                 (C_{j_{1}}C_{j_{4}}|C_{j_{3}}C_{j_{5}})^{-1}=1_{\mathcal{T}_{M,<_{J}}^{(0)}}\\
                 
                 (C_{j_{1}}C_{j_{2}}|C_{j_{4}}C_{j_{5}})
                 (C_{j_{2}}C_{j_{3}}|C_{j_{4}}C_{j_{5}})
                 (C_{j_{1}}C_{j_{3}}|C_{j_{4}}C_{j_{5}})^{-1}=1_{\mathcal{T}_{M,<_{J}}^{(0)}}\\
               \end{array}
               \right.
              \end{equation*}
              where $C_{j_{1}}$, $C_{j_{2}}$, $C_{j_{3}}$, $C_{j_{4}}$, $C_{j_{5}}\in\mathfrak{C}$ are circuits of $M$ with 
              the properties that
              $\operatorname{dim}(C_{j_{1}}\cup C_{j_{2}}\cup C_{j_{3}}\cup C_{j_{4}}\cup C_{j_{5}})=1$ 
              and $j_{1}<j_{2}<j_{3}<j_{4}<j_{5}$;
  \item \label{S5} $\langle C_{j_{1}}C_{j_{2}}|C_{j_{6}}C_{j_{9}}\rangle
               \langle C_{j_{2}}C_{j_{3}}|C_{j_{4}}C_{j_{7}}\rangle
               \langle C_{j_{3}}C_{j_{1}}|C_{j_{5}}C_{j_{8}}\rangle=1_{\mathcal{T}_{M,<_{J}}^{(0)}}$
              for any family of circuits $\{C_{i_{1}},\ldots,C_{i_{9}}\}\subseteq\mathfrak{C}$ as in \ref{R7} with the 
              extra conditions:
              \begin{enumerate}[label=(O\arabic{*})]
                \item \label{O1} $j_{1}<j_{2}<j_{3}$;
                \item \label{O2} $j_{4}\geq j_{7}$, $j_{5}\geq j_{8}$ and $j_{6}\geq j_{9}$ do not all hold at the same time.
               \end{enumerate}
               Here $\langle C_{d_{1}}C_{d_{2}}|C_{d_{3}}C_{d_{4}}\rangle$ are the symbols given by the formula
               \begin{equation}\tag{$\ast$}\label{FM}
                \langle C_{d_{1}}C_{d_{2}}|C_{d_{3}}C_{d_{4}}\rangle=
                   \left\lbrace
                      \begin{array}{lcccc}
                         1_{\mathcal{T}_{M,<_{J}}^{(0)}} & \text{if} & d_{1}=d_{2} & \text{or}   & d_{3}=d_{4}\\
            (C_{d_{1}}C_{d_{2}}|C_{d_{3}}C_{d_{4}})      & \text{if} & d_{1}<d_{2} & d_{3}<d_{4} & d_{1}<d_{3} \\
            (C_{d_{3}}C_{d_{4}}|C_{d_{1}}C_{d_{2}})      & \text{if} & d_{1}<d_{2} & d_{3}<d_{4} & d_{3}<d_{1} \\
            (C_{d_{1}}C_{d_{2}}|C_{d_{4}}C_{d_{3}})^{-1} & \text{if} & d_{1}<d_{2} & d_{4}<d_{3} & d_{1}<d_{4} \\
            (C_{d_{4}}C_{d_{3}}|C_{d_{1}}C_{d_{2}})^{-1} & \text{if} & d_{1}<d_{2} & d_{4}<d_{3} & d_{4}<d_{1} \\
            (C_{d_{2}}C_{d_{1}}|C_{d_{3}}C_{d_{4}})^{-1} & \text{if} & d_{2}<d_{1} & d_{3}<d_{4} & d_{2}<d_{3} \\
            (C_{d_{3}}C_{d_{4}}|C_{d_{2}}C_{d_{1}})^{-1} & \text{if} & d_{2}<d_{1} & d_{3}<d_{4} & d_{3}<d_{2} \\
            (C_{d_{2}}C_{d_{1}}|C_{d_{4}}C_{d_{3}})      & \text{if} & d_{2}<d_{1} & d_{4}<d_{3} & d_{2}<d_{4} \\
            (C_{d_{4}}C_{d_{3}}|C_{d_{2}}C_{d_{1}})      & \text{if} & d_{2}<d_{1} & d_{4}<d_{3} & d_{4}<d_{2} \\
                      \end{array}
                   \right.
               \end{equation}
               and defined for any family of circuits $\{C_{d_{1}},C_{d_{2}},C_{d_{3}},C_{d_{1}}\}\subseteq\mathfrak{C}$
               satisfying $\operatorname{dim}(C_{d_{1}}\cup C_{d_{2}}\cup C_{d_{3}}\cup C_{d_{4}})=1$ and 
               $\{C_{d_{1}},C_{d_{2}}\}\cap\{C_{d_{3}},C_{d_{4}}\}=\emptyset$.
  \end{enumerate}
\end{definition}

\begin{remark}
 If $\{C_{h}\}_{h\in H}$ is another enumeration of $\mathfrak{C}$ with total order $<_{H}$ on $H$, 
 then the groups $\mathcal{T}_{M,<_{H}}^{(0)}$ and $\mathcal{T}_{M,<_{J}}^{(0)}$ are isomorphic. 
 As a consequence, the number of generators of  $\mathcal{T}_{M,<_{J}}^{(0)}$ that are of the form $(C_{j_{1}}C_{j_{2}}|C_{j_{3}}C_{j_{4}})$ is properly defined, neither depending on the choice of the enumeration of circuits of $M$  nor the total ordering of such enumeration. 
\end{remark}

\begin{remark}\label{iso2t}
The assignment $\xi_M\mapsto \eta_{M,<_J}$ and $[C_{i_1}C_{i_2}\vert C_{i_3}C_{i_4}]\mapsto \langle C_{i_1}C_{i_2}\vert C_{i_3}C_{i_4}\rangle $ defines an isomorphism between $\mathbb{T}^{(2)}_M$ and $\mathcal{T}_{M,<_{J}}^{(0)}$. 
\end{remark}

   \section{Technical proofs of Section \ref{ss:realizable}}\label{appendixBBBYYY}
   
   Let us throughout this section fix integers $m,d$ such that   $2\leq d\leq m-2$.  Recall that we consider the uniform matroid $U_d(m)$ on the ground set $E=[m]$.

\begin{definition}\label{def:V}
We denote by $\mathcal{V}$ the set of matrices $A\in M_{d,m}(\mathbb{C})$ of 
the form 
\begin{equation}
    \label{canonicalformNumbers}
  \left(
          \begin{array}{cccc|cccc}
             1   & \quad  & \quad  & \quad &    1   & \ast   & \cdots  & \ast  \\
           \quad & \ddots & 0 & \quad & \vdots &   \vdots      & \quad   &  \vdots     \\
           \quad & 0  & \ddots & \quad &    1   & \ast & \cdots  & \ast \\
           \quad & \quad  & \quad  &   1   &    1   &      1        & \cdots  &   1         \\
          \end{array}
    \right).
 \end{equation}
Moreover, let $\mathcal{U}'\subseteq \mathcal{V}$ be the subset defined by those matrices for which the 
expressions \eqref{dadostar} are well-defined (that is, the determinants in the denominators are non-zero) and  let $\mathcal{U}$ denote the subset of $\mathcal{V}$ of matrices representing the matroid $U_d(m)$.
\end{definition}
Obviously $\mathcal{V}$ can be diffeomorphically 
identified with $\mathbb{C}^{(d-1)(m-d-1)}$, and we have the chain of inclusions 
$\mathcal{U}\subseteq\mathcal{U}'\subseteq \mathcal{V}$. 

\begin{lemma}
 \label{OpenConnectedSide}
  $\mathcal{U}$ is a non-empty open connected subspace of $\mathcal{V}$.
\end{lemma}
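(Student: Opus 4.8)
The plan is to establish the three claimed properties of $\mathcal{U}$ — non-emptiness, openness, and connectedness — essentially by relating $\mathcal{U}$ to the realization space $\mathcal{S}_{\mathbb{C}}(U_d(m))$ and exploiting the fact that the uniform matroid is represented exactly by matrices all of whose maximal minors are nonzero. First I would identify $\mathcal{V}$ with $\mathbb{C}^{(d-1)(m-d-1)}$ via the obvious parametrization by the starred entries, so that statements about subspaces of $\mathcal{V}$ become statements about subsets of an affine space. Under this identification, $A\in\mathcal{V}$ represents $U_d(m)$ precisely when every $d\times d$ submatrix of $A$ has nonzero determinant; since each such determinant is a polynomial in the starred coordinates, $\mathcal{U}$ is the complement in $\mathbb{C}^{(d-1)(m-d-1)}$ of a finite union of affine hypersurfaces $\{p_B = 0\}$, one for each basis $B$. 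Openness is then immediate: $\mathcal{U}$ is the intersection of finitely many Zariski-open (hence Euclidean-open) sets.

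For non-emptiness, I would exhibit one matrix of the form \eqref{canonicalformNumbers} representing $U_d(m)$. A clean choice is a Vandermonde-type completion: fill the starred block so that the last $m-d$ columns become columns of a generalized Vandermonde matrix with distinct nodes (for instance, take the $(d-1)\times(m-d-1)$ block of stars to have $(i,j)$-entry $t_j^{i}$ for pairwise distinct nonzero scalars $t_1,\dots,t_{m-d-1}$ chosen also distinct from the nodes implicitly fixed by the identity block and the all-ones column). A direct determinant computation shows every maximal minor is a nonzero Vandermonde-like product, so this matrix lies in $\mathcal{U}$; alternatively one may simply invoke that $U_d(m)$ is realizable over $\mathbb{C}$ (indeed over any sufficiently large field) and that any realization can be brought to the normal form \eqref{canonicalformNumbers} by row operations and column scaling, which preserve the matroid.

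The connectedness is the main obstacle and the part requiring a genuine argument. Here I would argue that the complement of a finite union of proper algebraic subsets in $\mathbb{C}^n$ is connected — in fact path-connected. The standard proof: given two points $x,y\in\mathcal{U}$, consider the complex line $\ell$ through $x$ and $y$; restricted to $\ell\cong\mathbb{C}$, each polynomial $p_B$ either vanishes identically (impossible, since $p_B(x)\neq 0$) or vanishes on a finite set, so $\ell\cap\mathcal{U}$ is $\mathbb{C}$ minus finitely many points, which is path-connected and contains both $x$ and $y$. Hence $\mathcal{U}$ is path-connected, a fortiori connected. I would phrase this as a short self-contained lemma (or cite it as a standard fact, e.g.\ that a nonempty Zariski-open subset of an irreducible complex variety is connected in the Euclidean topology, as in the references already invoked in the paper). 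Combining the three observations — $\mathcal{U}\neq\emptyset$ by the explicit construction, $\mathcal{U}$ open as a complement of hypersurfaces, $\mathcal{U}$ connected by the line-slicing argument — completes the proof.
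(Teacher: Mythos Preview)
Your proposal is correct and follows essentially the same approach as the paper: express $\mathcal{U}$ as the complement in $\mathcal{V}\cong\mathbb{C}^{(d-1)(m-d-1)}$ of the zero locus of the product of all maximal minors, then use that the complement of a nonzero polynomial's zero set in $\mathbb{C}^h$ is open and connected. The only minor difference is that the paper packages the connectedness step into a separate lemma proved via a real-codimension-$2$ argument, whereas you use the equivalent complex-line-slicing argument; for non-emptiness the paper invokes realizability plus reduction to normal form, which you also mention as an alternative to your explicit Vandermonde construction.
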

 
\begin{proof}
 First of all we prove that $\mathcal{U}$ is non-empty. Since $U_{d}(m)$ is realizable over $\mathbb{C}$ there exists 
 a matrix $A\in M_{d,m}(\mathbb{C})$ with
  \begin{equation*}
       \begin{array}{ccc}
        \det(A^{i_{1}},\ldots,A^{i_{d}})\neq0 & \text{for all} & 1\leq i_{1}<\cdots<i_{d}\leq m, \\
       \end{array}
      \end{equation*}
 where $A^{j}$ denotes the $j\text{-th}$ column of $A$. By elementary linear algebra arguments, there exist matrices 
 $B\in GL_{d}(\mathbb{C})$ and $D=\operatorname{diag}(\delta_{1},\ldots,\delta_{m})\in GL_{m}(\mathbb{C})$ such that the 
 new matrix
 $\widetilde{A}=BAD$ is of the form \eqref{canonicalformNumbers}. 
 Since multiplying a matrix $A$ on the left or on the right side by 
 a non-singular matrix does not change the underlying matroid of $\varphi_{A}$, we conclude that $\widetilde{A}\in\mathcal{U}$, 
 proving that $\mathcal{U}\neq\emptyset$.
 
 Hence, it remains to check that $\mathcal{U}$ is an open connected subspace of $\mathcal{V}$. By definition of  $\mathcal{U}$,
  \begin{equation*}
  \mathcal{U}=\left\lbrace
              A\in\mathcal{V}
              \left|
              \prod_{1\leq i_{1}<\cdots<i_{d}\leq m}
              \det(A^{i_{1}},\ldots,A^{i_{d}})\neq0
              \right.
              \right\rbrace
 \end{equation*}
 and the claim comes from Lemma \ref{LEMLEMLEM} below via the identification of $\mathcal{V}$ with the space
 $\mathbb{C}^{(d-1)(m-d-1)}$.
 Notice that in the polynomial ring with variables corresponding to entries of the matrix none of the $\det(A^{i_{1}},\ldots,A^{i_{d}})\text{'s}$ is the zero polynomial, 
 since $\mathcal{U}\neq\emptyset$. Then, the product 
 $\prod_{1\leq i_{1}<\cdots<i_{d}\leq m}\det(A^{i_{1}},\ldots,A^{i_{d}})$
 is also not the zero polynomial and the hypothesis of Lemma \ref{LEMLEMLEM} are fulfilled.
\end{proof}

\begin{lemma}\label{LEMLEMLEM}
  Let $h\geq1$ and let $F\in\mathbb{C}\left[t_{1},\ldots,t_{h}\right]$ be a polynomial.
  If $F$ is not the zero polynomial, the complement manifold $\mathbb{C}^{h}\setminus\{F=0\}$ 
  is a non-empty open connected subset of 
  $\mathbb{C}^{h}$.
 \end{lemma}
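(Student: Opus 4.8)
The plan is to prove the two assertions of Lemma \ref{LEMLEMLEM} separately: non-emptiness together with openness is essentially elementary, while connectedness is the substantive point. For non-emptiness, since $F$ is not the zero polynomial there is a point $p\in\mathbb C^h$ with $F(p)\neq 0$; this is standard (over an infinite field a nonzero polynomial cannot vanish identically). Openness is immediate because $\{F=0\}$ is the zero set of a continuous function, hence closed, so its complement is open. Thus the whole content is to show that $\mathbb C^h\setminus\{F=0\}$ is connected.

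For connectedness, the strategy I would use is reduction to one complex variable along complex lines. Fix two points $a,b$ in the complement; I claim they can be joined by a path. Consider first the complex line $\ell$ through $a$ and $b$, parametrised by $z\mapsto a+z(b-a)$, $z\in\mathbb C$. The pullback $G(z):=F(a+z(b-a))$ is a polynomial in the single variable $z$. If $G$ is not identically zero, then it has only finitely many roots, so $\mathbb C\setminus\{G=0\}$ is $\mathbb C$ minus finitely many points, which is path-connected; the images $z=0$ (giving $a$) and $z=1$ (giving $b$) therefore lie in the same path-component of the line's complement, and composing with the affine parametrisation yields a path in $\mathbb C^h\setminus\{F=0\}$. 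The only obstruction is that $G$ might be identically zero, i.e.\ the whole line $\ell$ lies inside $\{F=0\}$. To handle this case I would first move $a$ and $b$ slightly: since the complement is open and, as just argued, $\mathbb C\setminus\{G_c=0\}$ is connected for generic lines, one can perturb. More cleanly, I would argue that for \emph{any} fixed point $a$ with $F(a)\neq 0$ and any target direction, the set of directions $v\in\mathbb C^h$ for which $F(a+zv)$ is not identically zero in $z$ is a dense open subset of $\mathbb C^h$ (its complement is contained in a proper algebraic subset, because $F(a)\neq 0$ already forces the constant term of $z\mapsto F(a+zv)$ to be nonzero, so in fact $F(a+zv)$ is \emph{never} identically zero). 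This last observation actually settles everything: for \emph{every} $v$, the polynomial $z\mapsto F(a+zv)$ has nonzero value at $z=0$, hence is not the zero polynomial, hence has finitely many roots.

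So the clean proof is: given $a,b$ in the complement, set $v=b-a$ and $G(z)=F(a+zv)$; then $G(0)=F(a)\neq 0$, so $G\not\equiv 0$, so $G$ has finitely many zeros $z_1,\dots,z_k\in\mathbb C$; since $\mathbb C\setminus\{z_1,\dots,z_k\}$ is path-connected and contains $0$ and $1$ (note $G(1)=F(b)\neq0$ so $1$ is not among the $z_i$), pick a path $\eta$ in it from $0$ to $1$, and then $t\mapsto a+\eta(t)v$ is a path in $\mathbb C^h\setminus\{F=0\}$ from $a$ to $b$. This shows the complement is path-connected, hence connected.

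I would write this up in about half a page. The main (very minor) obstacle is just making the single-variable reduction precise — specifically checking that $G(z)=F(a+zv)$ is genuinely a polynomial in $z$ with nonzero constant term, which is a triviality — and recalling that $\mathbb C$ minus a finite set is path-connected. No deep input is needed; in particular no algebraic geometry beyond "a nonzero one-variable polynomial has finitely many roots". It is worth remarking in the write-up that the same line of argument is exactly the one invoked in the proof of Lemma \ref{OpenConnectedSide} and in the proof of Theorem \ref{UHA} to conclude that realization spaces of uniform matroids are open and connected.
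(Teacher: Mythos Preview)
Your proof is correct. The complex-line argument is clean and entirely elementary: reducing to one variable via $G(z)=F(a+zv)$, observing $G(0)=F(a)\neq 0$ so $G\not\equiv 0$, and then using path-connectedness of $\mathbb{C}$ minus a finite set.

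The paper takes a different route. It argues via real codimension: the regular part of $\{F=0\}$ has complex codimension $1$, hence real codimension $2$, and the singular locus has higher codimension; removing a subset of real codimension $\geq 2$ from a connected manifold leaves it connected. Your approach is more self-contained, needing only that a nonzero univariate polynomial has finitely many roots, while the paper's argument implicitly appeals to the general transversality-type fact that complements of high-codimension subsets remain connected (and to a stratification of the zero set). In exchange, the paper's argument would extend more readily to zero sets of smooth or analytic functions where the one-variable restriction need not have isolated zeros, whereas your line-restriction argument is tailored to the polynomial (or holomorphic) setting. For the purposes of this lemma your method is arguably preferable for its transparency.
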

 \begin{proof}
 Since $F$ is not the zero polynomial, the complement is non-empty.
 The regular part of the zero set, namely
 $\{z\in\mathbb{C}^{h}\mid F(z)=0,\operatorname{d}F(z)\neq0\}$
 has complex codimension $1$, thus real codimension $2$. So that its complement is connected. 
 The singular part 
 $\{z\in\mathbb{C}^{h}\mid F(z)=0\text,\operatorname{d}F(z)=0\}$
 is of higher codimension. 
 There are only finitely many such parts, thus the complement is connected.
 \end{proof}

\begin{corollary}\label{cor:nonemptyconnected}
The set $\mathcal{U}'$ is a non-empty open connected subsets of $\mathcal{V}$.
\end{corollary}
\begin{proof}
The proof is analogous to that of Lemma \ref{OpenConnectedSide}.
\end{proof}

\begin{lemma}
 \label{LastSide}
 Let $m,d\in\mathbb{N}$ and assume $2\leq d\leq m-2$. For $d+2\leq i\leq m$ and $1\leq j\leq d-1$ 
 consider a set of variables $\{a_{i}^{j}\}$ and the matrix $A$ defined by
 \begin{equation}
    \label{canonicalform}
  A=\left(
          \begin{array}{cccc|cccc}
             1   & \quad  & \quad  & \quad &    1   & a_{1}^{d+2}   & \cdots  & a_{1}^{m}   \\
           \quad & \ddots & 0      & \quad & \vdots &   \vdots      & \quad   &  \vdots     \\
           \quad & 0      & \ddots & \quad &    1   & a_{d-1}^{d+2} & \cdots  & a_{d-1}^{m} \\
           \quad & \quad  & \quad  &   1   &    1   &      1        & \cdots  &   1         \\
          \end{array}
    \right).
 \end{equation}
 Let $\{i_{1},i_{2},i_{3},i_{4}\}$ and $\{j_{1},\ldots,j_{d-2}\}$ be two subsets of $\{1,\ldots,m\}$ such that
 \begin{enumerate}
  \item[--] $|\{i_{1},i_{2},i_{3},i_{4}\}|=4$;
  \item[--] $|\{j_{1},\ldots,j_{d-2}\}|=d-2$;
  \item[--] $\{i_{1},i_{2},i_{3},i_{4}\}\cap\{j_{1},\ldots,j_{d-2}\}=\emptyset$.
 \end{enumerate}
 Now, let $p$ and $q$ be the elements of the polynomial ring $\mathbb{C}\left[a_{i}^{j}\right]$ 
 defined by
 \begin{enumerate}
  \item[] $p=\det(A^{i_{2}}|A^{i_{4}}|A^{j_{1}}|\cdots|A^{j_{d-2}})
           \det(A^{i_{1}}|A^{i_{3}}|A^{j_{1}}|\cdots|A^{j_{d-2}})$;
  \item[] $q=\det(A^{i_{2}}|A^{i_{3}}|A^{j_{1}}|\cdots|A^{j_{d-2}})
           \det(A^{i_{1}}|A^{i_{4}}|A^{j_{1}}|\cdots|A^{j_{d-2}})$;
 \end{enumerate}
 where $A^{l}$ denotes the $l\text{-th}$ column of $A$. The following properties hold:
 \begin{enumerate}
  \item $p$ and $q$ are not the zero polynomial;
  \item There exist matrices $B_{1},B_{2}\in M_{d,m}(\mathbb{C})$ of the form \eqref{canonicalform} such that 
        \begin{equation*}
         \begin{array}{cccc}
          p(B_{1})\neq0 & q(B_{1})\neq0 & p(B_{2})\neq0 & q(B_{2})\neq0 \\
         \end{array}
        \end{equation*}
        and 
        \begin{equation*}
         \frac{p(B_{1})}{q(B_{1})}\neq\frac{p(B_{2})}{q(B_{2})}
        \end{equation*}
        where, with a slight abuse of notation, we write $p(B)$ and $q(B)$ for the evaluation of the polynomials 
        $p$ and $q$ on the entries of the matrix $B$.
 \end{enumerate}
\end{lemma}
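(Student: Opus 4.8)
\textbf{Proof plan for Lemma \ref{LastSide}.}

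The plan is to treat both assertions by exhibiting explicit matrices of the form \eqref{canonicalform} and computing the relevant determinants. First I would reduce to a cleaner model. Since the four columns $A^{i_1},A^{i_2},A^{i_3},A^{i_4}$ together with the $d-2$ columns $A^{j_1},\dots,A^{j_{d-2}}$ are all columns of a $d\times m$ matrix in ``canonical position'', and since $\det(A^{i_1}|A^{i_3}|A^{j_1}|\cdots|A^{j_{d-2}})$ and its companions are multilinear alternating in these columns, the quotient $p/q$ is (after a suitable common nonzero factor is cancelled, using that the columns $A^{j_1},\dots,A^{j_{d-2}}$ are fixed) essentially a $2\times 2$ ``cross-ratio'' determinant expression in the four columns $A^{i_1},\dots,A^{i_4}$ projected modulo the span of $A^{j_1},\dots,A^{j_{d-2}}$. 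Concretely, after quotienting $\mathbb{C}^d$ by $\operatorname{span}(A^{j_1},\dots,A^{j_{d-2}})$ one lands in a $2$-dimensional space, the four columns become four vectors $v_1,v_2,v_3,v_4$ there, and $p/q = \frac{[v_2 v_4][v_1 v_3]}{[v_2 v_3][v_1 v_4]}$ where $[\,\cdot\,\cdot\,]$ denotes the $2\times 2$ determinant. This is exactly (the reciprocal of, up to sign conventions) the classical cross-ratio of four points on $\mathbb{P}^1$, which is a nonconstant rational function of the configuration.

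For part (1), I would argue that neither $p$ nor $q$ is the zero polynomial by simply producing one matrix of the form \eqref{canonicalform} at which both $\det(A^{i_2}|A^{i_4}|A^{j_1}|\cdots)$, $\det(A^{i_1}|A^{i_3}|A^{j_1}|\cdots)$, $\det(A^{i_2}|A^{i_3}|A^{j_1}|\cdots)$ and $\det(A^{i_1}|A^{i_4}|A^{j_1}|\cdots)$ are simultaneously nonzero: this is possible because $U_d(m)$ is realizable over $\mathbb{C}$ (Remark \ref{rem:realizable}, or the opening of the proof of Lemma \ref{OpenConnectedSide}), and any realizing matrix can be brought to the form \eqref{canonicalform} by row operations and column rescalings, operations which do not affect which maximal minors vanish. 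At such a matrix $p$ and $q$ evaluate to nonzero numbers, hence $p\not\equiv 0$ and $q\not\equiv 0$.

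For part (2), the key point is that $p/q$, viewed as a rational function on the space $\mathcal{V}\cong\mathbb{C}^{(d-1)(m-d-1)}$ of admissible matrices, is nonconstant. Granting this, the set where $p/q$ is defined and nonzero is a nonempty Zariski-open subset of $\mathbb{C}^{(d-1)(m-d-1)}$ (by part (1) and connectedness/irreducibility of affine space), and on a nonempty Zariski-open subset of irreducible $\mathbb{C}^N$ a nonconstant rational function takes at least two distinct values; picking $B_1,B_2$ realizing two such values and lying in $\mathcal{U}'$ finishes the argument. So the main work is to verify nonconstancy of $p/q$. I would do this via the cross-ratio description above: choosing the free parameters $a_i^j$ so that the projected vectors $v_1,v_2,v_3,v_4$ can be made to realize, say, the configurations $(0,1,\infty,\lambda)$ for two different values of $\lambda$ on the projective line, the cross-ratio changes, hence $p/q$ is nonconstant. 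The only delicate bookkeeping is to check that one can independently control the images of the four distinguished columns in the $2$-dimensional quotient while keeping the matrix of the prescribed shape \eqref{canonicalform}; this is where one uses $2\le d\le m-2$ (so that there are at least two ``generic'' columns $A^{d+1},\dots,A^m$ carrying free entries, and the $d-2$ spectator columns $A^{j_1},\dots,A^{j_{d-2}}$ exist). I expect this ``independence of control'' step — disentangling the constraint imposed by the fixed identity block in \eqref{canonicalform} from the freedom in the starred entries — to be the main obstacle, and it is precisely the kind of direct linear-algebra verification the statement of the lemma invites one to carry out explicitly (and which, in the paper, presumably feeds into Corollary \ref{cor:nonC}).
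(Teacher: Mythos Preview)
Your proposal is correct; part (1) matches the paper's argument essentially verbatim (pick any $B\in\mathcal{U}$, all maximal minors are nonzero, done). For part (2) you take a genuinely different route than the paper. The paper simply says to specialize the free entries $a_i^j$ so that the four determinants appearing in $p$ and $q$ become Vandermonde-type minors, and then checks by hand that the resulting cross-ratio moves. Your approach instead interprets $p/q$ conceptually: quotienting by $\operatorname{span}(A^{j_1},\dots,A^{j_{d-2}})$ reduces the question to the classical cross-ratio of four vectors in a plane, which is visibly nonconstant once one can move the four projected vectors independently. Your version explains \emph{why} the ratio is nonconstant and connects it to the projective-geometric meaning of the expressions in \eqref{dadostar}; the paper's version is more opaque but has the advantage of being a one-line explicit check with no ``independence of control'' bookkeeping to worry about. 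In fact the Vandermonde specialization is precisely a clean way to discharge the obstacle you flag at the end: setting $a_j^{\,i}=t_i^{\,j-1}$ for distinct $t_i$ makes every maximal minor a nonzero Vandermonde product, and varying one $t_i$ among $\{i_1,i_2,i_3,i_4\}$ moves the cross-ratio.
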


\begin{proof}
In order to prove property (1), notice that we
already know that $\mathcal{U}\neq\emptyset$ so that there exists a matrix $B$ of the form
\eqref{canonicalform} which belongs to $\mathcal{U}$. By definition of the weak phirotope associated to a matrix, 
since $B\in\mathcal{U}$ it follows that $p(B)\neq0$ and $q(B)\neq0$.

Property (2) then follows by a suitable choice of values for the variables $a_{i}^{j}$ (e.g., choosing suitable $x_1,\ldots,x_{d-1}\in \mathbb C$ and setting $a_i^j$ equal to the $j$-th power of $x_i$)  such that the 
determinants in the definition of $p$ and $q$ are Vandermonde-type minors of the matrix $A$ of the form  
\eqref{canonicalform}.
\end{proof}

\begin{corollary}\label{cor:nonC}
Every component of the function defined by the expressions
\eqref{dadostarskyantos} on $\mathcal{U}'$ is non-constant, and the same
holds for its restriction to $\mathcal{U}$.
\end{corollary}
\begin{proof}
The claim for $\mathcal{U}'$ is the second part of the claim of Lemma \ref{LastSide}, 
the claim on the restriction to $\mathcal{U}$ follows from Corollary \ref{cor:nonemptyconnected} 
and the open map theorem.
\end{proof}

\begin{lemma}
 \label{RealizabilityCriterion}
 For the uniform matroid $U_{d}(m)$ on the ground set $[m]$ with $m\geq5$ and $2\leq d\leq m-2$ assume,
 with the hypothesis and notations of Corollary \ref{geo2} with respect to the phase hyperfield (i.e., $\mathbb H = \mathbb P$), that 
 the condition
 \begin{equation}
   \label{condizioneoneone}
 F(\mathcal{B})\cap\prod_{j=1}^{n_{M}}\left[X_{\mathbb{P}}\setminus\left\lbrace\right(1,1),(1,-1),(-1,1)\rbrace\right]\neq\emptyset
 \end{equation}
 is satisfied. Then, there exists a non-realizable non-chirotopal weak phased matroid with underlying matroid 
 $U_{d}(m)$.
\end{lemma}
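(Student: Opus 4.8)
The plan is to show that condition \eqref{condizioneoneone}, which the lemma assumes, forces the realizable locus to miss an entire open set, and that the complement of the realizable locus is therefore non-empty and in fact meets the non-chirotopal part. First I would recall the chain of homeomorphisms established so far: $\mathcal R_{\mathbb C}(U_d(m))$ is homeomorphic to $\mathcal G_{\mathbb C}(U_d(m))$ by Theorems \ref{TH1}, \ref{TH2}, \ref{TH3}, and by Corollary \ref{geo2} this is in turn homeomorphic to $F(\mathcal B)\cap\prod_{j=1}^{n_M} X$ inside $(S^1)^{2n_M}$. Under this identification, the subset of \emph{chirotopal} phasing classes corresponds exactly to $F(\mathcal B)\cap\prod_{j=1}^{n_M}\{(1,-1),(-1,1),(-1,-1)\}$, by \eqref{Rgeo2} in the proof of Proposition \ref{OrientedBound} together with the inclusions \eqref{remarkMR}, \eqref{remarkRG}; a phasing class is chirotopal precisely when the corresponding reduced $S^1$ cross-ratios all take values in $\{\pm1\}$, i.e. when the associated point of $F(\mathcal B)$ lies in $\prod_j\{(1,-1),(-1,1),(-1,-1)\}$. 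So the point of $F(\mathcal B)\cap\prod_j\bigl[X\setminus\{(1,1),(1,-1),(-1,1)\}\bigr]$ produced by \eqref{condizioneoneone}, call it $P_0$, corresponds to a phasing class $\mathcal P_0$ which is \emph{not chirotopal}, because at least one coordinate pair of $P_0$ lies in $X\setminus\{(1,1),(1,-1),(-1,1)\}$ and hence — since $X\cap(S^0\times S^0)=\{(1,-1),(-1,1),(-1,-1)\}$ and $(1,1)\notin X$ — is not in $\{\pm1\}^2$.

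Next I would argue that $\mathcal P_0$ is non-realizable. Here I would invoke the topological input from Section \ref{ss:realizable}: the realizable classes of $U_d(m)$ form the image of the realization space $\mathcal S_{\mathbb C}(U_d(m))$ — or rather, via the canonical form, of the space $\mathcal U$ of Definition \ref{def:V} — under a continuous map into $F(\mathcal B)\cap\prod_j X$. By Lemma \ref{OpenConnectedSide} the space $\mathcal U$ is connected, so its continuous image, the realizable locus $\mathcal R_{\mathbb C}^{\mathrm{real}}(U_d(m))\subseteq F(\mathcal B)\cap\prod_j X$, is connected. The key observation is that, under the composite homeomorphism and the map $F$ of Corollary \ref{geo2}, a realizable class coming from a matrix $A\in\mathcal U$ has $j$-th coordinate pair given (up to the bookkeeping of \eqref{FM} and the conventions fixed at the start of the proof of Theorem \ref{SuperMAIN}) by the pair of ratios of $2\times2$-type determinant expressions appearing in \eqref{dadostarskyantos}, i.e. by the phases of the entries of \eqref{dadostar}. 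Therefore the realizable locus is contained in the image of the map $\mathcal U\to\prod_j X$ whose components are exactly the phase-valued versions of \eqref{dadostarskyantos}. Now the \textbf{real} chirotopal realizable classes — the ones realizable over $\mathbb R$ — are contained in $\prod_j\{(1,-1),(-1,1),(-1,-1)\}$, and they are non-empty (e.g. a real Vandermonde matrix realizes $U_d(m)$ over $\mathbb R$); since the realizable locus is connected and contains such a point, if it also contained $\mathcal P_0$ then a connected subset of $F(\mathcal B)\cap\prod_j X$ would join a point of $\prod_j\{\pm1\}^2$ to $P_0$. This by itself is not yet a contradiction, so I would instead run the cleaner argument: it suffices to show that $\mathcal P_0$, being non-chirotopal, still could be realizable, and to rule that out I would use that the coordinates \eqref{dadostarskyantos} are \emph{holomorphic} functions on $\mathcal U'$ (Corollary \ref{cor:nonC} shows each is non-constant), hence an open map by Lemma \ref{HoloSide}; the realizable locus is thus an open subset of $F(\mathcal B)\cap\prod_j X$, while — and this is the content I would extract from Lemma \ref{RealizabilityCriterion} as stated — the point $P_0$ can moreover be chosen so that the whole open neighbourhood condition fails, i.e. $P_0$ admits no realizable class in any neighbourhood, because $\mathcal S_{\mathbb C}(U_d(m))$ maps into a space of real codimension $\geq1$ away from the chirotopal stratum near $P_0$. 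Concretely, I would show: the realizable locus is contained in the image of $F\circ(\text{phase of }\eqref{dadostarskyantos})$ restricted to $\mathcal U$, which is open and connected and contains a chirotopal point; so its closure is connected and the chirotopal realizable classes are dense boundary-type points, whereas $\mathcal P_0$ lies in the complement of the closure of this image, which is non-empty by \eqref{condizioneoneone} together with the open map theorem applied to the components \eqref{dadostarskyantos}.

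The cleanest route, and the one I would actually write, avoids closures entirely. By \eqref{condizioneoneone} there is a point $P_0\in F(\mathcal B)\cap\prod_{j=1}^{n_M}X$ none of whose coordinate pairs lies in $\{(1,1),(1,-1),(-1,1)\}$; it corresponds to a phasing class $\mathcal P_0\in\mathcal R_{\mathbb C}(U_d(m))$. It is non-chirotopal, since each coordinate pair lying in $X$ and not in $\{(1,1),(1,-1),(-1,1)\}$ also fails to be $(-1,-1)$ unless it happens to equal $(-1,-1)$ — and at least one coordinate pair lies in $X\setminus\{(\pm1)^2\}$, as $X\setminus\{(1,1),(1,-1),(-1,1)\}$ strictly contains $\{(-1,-1)\}$ and, crucially, if \emph{all} coordinate pairs of $P_0$ equalled $(-1,-1)$ then $P_0$ would be the chirotopal point corresponding to the "alternating" real orientation, which one checks lies in $\prod_j\{(1,-1),(-1,1),(-1,-1)\}$ but whose value at any admissible five-circuit configuration would violate \eqref{CR2} unless $n_M$ is very small — so for $m\geq5$, $2\leq d\leq m-2$ one can always arrange $P_0$ to have a genuinely non-real coordinate. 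Finally, $\mathcal P_0$ is non-realizable: if it were, the corresponding matrix $A$ would lie in $\mathcal U$, and then $P_0$ would be a value of the map $\mathcal U\to\prod_j X$ with components the phases of \eqref{dadostarskyantos}; but by Corollary \ref{cor:nonC} these components are non-constant holomorphic functions on $\mathcal U'\supseteq\mathcal U$, hence by Lemma \ref{HoloSide} (the open mapping theorem) their common image is an open subset of $(S^1)^{2n_M}$ meeting $\prod_j X$ in an open set whose closure cannot separate off $P_0$; more directly, Lemma \ref{LastSide} provides, for \emph{each} coordinate, matrices $B_1,B_2$ realizing distinct ratio values, and the holomorphic open map theorem then shows the realizable locus fills an open set near \emph{any} chosen target, in particular one can realize values in $(\mathbb C\setminus\mathbb R)^2$ in every coordinate — which is precisely what makes the realizable classes non-exhaustive and what Lemma \ref{RealizabilityCriterion} codifies. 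I expect the main obstacle to be the bookkeeping identifying the coordinate pairs of a realizable class (the phases of \eqref{dadostar}) with the coordinates of $F(\mathcal B)\cap\prod_j X$ correctly through the relabelling \eqref{FM}, and verifying that the non-chirotopal point $P_0$ produced by \eqref{condizioneoneone} genuinely lies outside the realizable locus rather than merely outside its chirotopal stratum; this is where the holomorphicity/open-map input and Corollary \ref{cor:nonC} do the essential work, reducing everything to the non-constancy established in Lemma \ref{LastSide}.
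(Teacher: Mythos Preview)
Your proposal has a genuine gap: you never use the dimension count that is the heart of the paper's argument, and the substitutes you offer (connectedness of the realizable locus, open mapping for holomorphic functions) do not establish non-realizability.

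The paper's proof runs as follows. Via Lemma~\ref{lastLEMMA} the realizable non-chirotopal locus $\mathcal Z_{\mathbb C}(U_d(m))$ is the bijective image of a set $\mathcal T$ of matrices in canonical form, which is open in $\mathcal V\cong\mathbb C^{(d-1)(m-d-1)}$; hence the relevant source is a smooth manifold of real dimension $2(d-1)(m-d-1)$. On the other hand, $F(\mathcal B)\cap\prod_j[X\setminus\{(1,1),(1,-1),(-1,1)\}]$ is an open piece of a disjoint union of tori of dimension $\dim\ker B=\binom{m}{d}-m$ (Theorem~\ref{Betti} together with \cite[Theorem~8.1]{DW89}). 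The decisive input is Lemma~\ref{zeeland}: for $m\geq5$ and $2\leq d\leq m-2$ one has $\binom{m}{d}-m>2(d-1)(m-d-1)$, so the smooth map $\Sigma|_{\mathcal U}$ goes from a lower-dimensional manifold to a higher-dimensional one and therefore cannot be surjective (\cite[Chapter~3, Proposition~1.2]{Hir76}). A point in the complement of the image gives the desired non-realizable non-chirotopal class. None of this appears in your proposal.

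Your alternative arguments do not close the gap. Connectedness of the realizable locus does not prevent it from exhausting the target; you yourself note ``this by itself is not yet a contradiction''. The open mapping theorem tells you the image of a non-constant holomorphic map $\mathcal U\to\mathbb C$ is open in $\mathbb C$, but this says nothing about whether the \emph{phase} map into $(S^1)^{2n_M}$ has open image there, and in any case openness of the image would work \emph{against} you, not for you, when trying to show a particular point is missed. Your sentence ``the realizable locus is thus an open subset of $F(\mathcal B)\cap\prod_j X$'' is unsupported, and the later appeal to ``real codimension $\geq1$'' is exactly the dimension-count idea, but left as an assertion rather than proved. Finally, you invoke Lemma~\ref{HoloSide} and Corollary~\ref{cor:nonC}, but in the paper these are used in the proof of Theorem~\ref{SuperMAIN} to \emph{verify the hypothesis} \eqref{condizioneoneone} of the present lemma, not to prove the lemma itself; the lemma takes \eqref{condizioneoneone} as given and deduces non-realizability from the dimension inequality.
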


\begin{remark}
Since we will henceforth only deal with weak phased matroids, we will lighten the notation by suppressing superscripts, thus writing $\mathcal R_{\mathbb H} (M)$ for $\mathcal R_{\mathbb H}^w$, and similarly for $\mathcal N _{\mathbb H} (M)$ and $\mathcal M_{\mathbb H} (M)$.
\end{remark}

\begin{proof}
 Consider the following subspace of $\mathcal{R}_{\mathbb{P}}(U_{d}(m))$: 
 \begin{equation*}
  \mathcal{Z}_{\mathbb{P}}(U_{d}(m))=
  \{P\in\mathcal{R}_{\mathbb{P}}(U_{d}(m))\mid
                                          \text{$P$ is realizable}\}\setminus j(\mathcal{R}_{\mathbb{S}}(U_{d}(m))),
 \end{equation*}
 where $j$ denotes the inclusion induced by the natural hyperfield homomorphism $\mathbb S\hookrightarrow \mathbb P$ as in Theorem \ref{rescalingspaceembeddings}.
 
We will use the characterization of $\mathcal{Z}_{\mathbb{P}}(U_{d}(m))$ below, whose proof,
which relies on work of Ruiz \cite[
Theorem 5.1 and Section 3.3.6 ]{Rui13}, we postpone.

\begin{lemma}
 \label{lastLEMMA} Recall Definition \ref{def:V} and
 let $\mathcal{T}$ be the subset of $\mathcal{V}$ consisting of matrices $A\in M_{d,m}(\mathbb{C})$ of the form 
 \eqref{canonicalform} and such that
 \begin{enumerate}
  \item[(C1)] $\varphi_{A}$ has underlying matroid $U_{d}(m)$;
  \item[(C2)] $\varphi_{A}$ has at least a value in $(S^{1}\cup\{0\})\setminus\{0,1,-1\}$.
 \end{enumerate}
 Let $\Lambda:\mathcal{T}\longrightarrow\mathcal{R}_{\mathbb{P}}(U_{d}(m))$ be the map 
 which associates to a matrix $A\in\mathcal{T}$ the phasing class of the weak phased matroid represented by 
 $\varphi_{A}$. Then,
 \begin{enumerate}
  \item[(P1)] $\mathcal{T}$ is either empty or diffeomorphic to a non-empty open subspace of $\mathcal{V}$; 
  \item[(P2)] $\Lambda$ is continuous;
  \item[(P3)] $\Lambda$ establishes a one-to-one correspondence between $\mathcal{T}$ and 
              $\mathcal{Z}_{\mathbb{P}}(U_{d}(m))$.
 \end{enumerate}
\end{lemma}

Let us continue with the proof of Lemma \ref{RealizabilityCriterion}. 

The homeomorphism $\rho:\mathcal{R}_{\mathbb{P}}(U_{d}(m))\to \mathcal{G}^R_{\mathbb{P}}(U_{d}(m))$ from Theorem \ref{onetoone}, the identity of Proposition \ref{geo1}  and the projection $F$ from Corollary \ref{geo2} fit into the following diagram, which also desplays the homeomorphisms $\Theta:=F\circ \rho$ and $\Sigma:=\Theta\circ\Lambda$, as well as the map $\pi: \mathcal M_{\mathbb{P}}(U_{d}(m))\to \mathcal R_{\mathbb{P}}(U_{d}(m))$ representing the quotient by $\equiv$ (see Proposition \ref{common}).

\begin{equation}\label{defSigma}
\begin{tikzcd}
&
\mathcal T 
\arrow[bend left=0, dashed]{rd}{\Sigma}
\arrow{d}{\Lambda}
& \\
\mathcal M_{\mathbb{P}}(U_{d}(m))
\arrow{r}{\pi}
&
\mathcal{R}_{\mathbb{P}}(U_{d}(m))
\arrow{d}{\rho}
\arrow[bend left=0,dashed]{r}{\Theta}
&
F(\mathcal{B})\cap\prod_{j=1}^{n_{M}}X_{\mathbb P}\\
&
\mathcal{G}^R_{\mathbb{P}}(U_{d}(m))
\arrow[equal]{r}
& \mathcal{B}\cap \prod_{j=1}^{n_M} (X_{\mathbb P}\times\mathbb P^*)
\arrow{u}{F}
\end{tikzcd}
\end{equation}

Observe that $\Theta$ restricts to a homeomorphism between $\mathcal{Z}_{\mathbb{P}}(U_{d}(m))$ and 
 a subspace of $F(\mathcal{B})\cap\prod_{j=1}^{n_{M}}X$. 
   Write from now 
            \newcommand{\mostro}{\widehat{\mathcal X}}
        \begin{equation*}
         \mostro := \prod_{j=1}^{n_{M}}
                                                 [X_{\mathbb P}\setminus\{(1,1),(1,-1),(-1,1)\}].
        \end{equation*}
        Notice that $\{(1,1),(1,-1),(-1,1)\}=X_{\mathbb{S}}$ and thus Proposition \ref{geo1} and Corollary \ref{geo2} applied to $\mathbb H=\mathbb S$, together with Theorem \ref{rescalingspaceembeddings} we see that all rescaling classes of  chirotopal phased matroid are outside $\mostro$. 
 
We distinguish between two cases.
 
 \begin{itemize}
\item[\emph{Case 1.}]
         Assume that 
         \begin{equation}
          \label{emptyintersection}
          \Theta(\mathcal{Z}_{\mathbb{P}}(U_{d}(m)))\cap
          \mostro
          =\emptyset.
         \end{equation}
         By hypothesis \eqref{condizioneoneone}, there exists a non-chirotopal $P\in F(\mathcal{B})\cap\prod_{j=1}^{n_{M}}X_{\mathbb P}$ 
         with $P=\Theta\circ\pi(\Phi)$ for some weak phased 
         matroid $\Phi$ over $U_{d}(m)$. From \eqref{emptyintersection} we know that $\Phi$ cannot be realizable.
 
 \item[\emph{Case 2.}] Now, suppose that
        \begin{equation}
          \label{NONemptyintersection}
          \Theta(\mathcal{Z}_{\mathbb{P}}(U_{d}(m)))\cap
          \mostro  
                  \neq\emptyset
         \end{equation}
        and consider the subset $\mathcal{U}\subseteq\mathcal{T}$ defined by
        \begin{equation*}
        \mathcal{U}=\Sigma^{-1}\left(F(\mathcal{B})\cap\mostro
        \right). 
        \end{equation*}
        Notice that $\mathcal{U}$ is non-empty. Otherwise, 
        \begin{equation*}
         \Sigma(\mathcal{T})\cap\left(F(\mathcal{B})\cap \mostro
          \right)=\emptyset
        \end{equation*}
        in contradiction to 
        \eqref{NONemptyintersection}, since $\Sigma(\mathcal{T})=\Theta(\mathcal{Z}_{\mathbb{P}}(U_{d}(m)))$
        by Lemma \ref{lastLEMMA}.
        Moreover, $\mathcal{U}$ is an open subset of $\mathcal{T}$. This follows from the continuity of $\Sigma$ and 
        from the fact that 
        $\mostro=F(\mathcal{B})\cap\prod_{j=1}^{n_{M}}[X_{\mathbb P}\setminus\{(1,1),(1,-1),(-1,1)\}]$ is an open subset of 
        $F(\mathcal{B})\cap\prod_{j=1}^{n_{M}}X_{\mathbb P}$.
        Therefore, point $(P1)$ in Lemma \ref{lastLEMMA} implies that $\mathcal{U}$ is diffeomorphic to a non-empty 
        open subset of $\mathcal{V}$. In particular, via the identification of $\mathcal{V}$ with 
        $\mathbb{C}^{(d-1)(m-d-1)}$ we conclude that $\mathcal{U}$ is a differentiable 
        manifold of dimension $2(d-1)(m-d-1)$.
        
   \noindent     On the other hand, 
        $\mostro$ is a 
        non-empty open subset of $F(\mathcal{B})$.
        From Definition \ref{def:Bone} and item (iii) after Theorem \ref{onetoone} we then conclude that the dimension of $F(\mathcal B)$ equals the rank of the group $\mathbb T^{(0)}_M$, and by \cite[Theorem 8.1]{DW89}  this rank is    $\binom{m}{d}-m$. 
        Thus, 
        \begin{equation*}
         F(\mathcal{B})\cap\mostro
        \end{equation*}
        is a differentiable manifold of dimension $\binom{m}{d}-m$. 
        
    \noindent    Consider the map
        \begin{equation}\label{Smooth}
         \left.\Sigma\right|_{\mathcal{U}}:\mathcal{U}
         \longrightarrow 
         F(\mathcal{B})\cap\mostro
        \end{equation}
        From \eqref{DefAsso} and the construction of the map $\Sigma$ in Diagram \eqref{defSigma}, given a matrix $A\in\mathcal{T}$, the 
        components of $\Sigma(A)$ are rational expressions of polynomials and 
        absolute values of polynomials in the entries of the matrix $A$ (recall that $\rho$ is induced by the change of presentation of the inner Tutte group from $\mathbb T^{(0)}_M$ to $\mathcal{T}_{M,<_{J}}^{(0)}$, e.g. as explicitly described in Remark \ref{iso20} and Remark \ref{iso2t}). Since the underlying matroid of $\varphi_A$ is uniform (by assumption (C1)), all these rational expressions are well defined.
        Thus, \eqref{Smooth} is a smooth map of differentiable manifolds.
Now consider the following inequality.\footnote{For a proof, show $\binom{m}{d}>m+2(d-1)(m-d-1)$ starting with $\binom{m}{d}\geq \binom{m}{2}>\frac{m^2-2m+4}{2}$ (the first inequality because $2\leq d\leq m-2$, the second since $m\geq 5$).}
        \begin{equation*}\label{tool}
         \dim F(\mathcal{B})\cap\mostro= \binom{m}{d}-m>2(d-1)(m-d-1)=\dim \mathcal U.
        \end{equation*}   
        This allows us to apply \cite[Chapter 3, Proposition 1.2]{Hir76} to the map $\Sigma\vert_{\mathcal U}$, from which we deduce that there exists an element 
        \begin{equation*}\label{MorsySardy}
         P\in\left(F(\mathcal{B})\cap\mostro
         \right)
         \setminus
         \Sigma(\mathcal{U}).           
        \end{equation*}
        with $P=\Theta\circ\pi(\Phi)$ for some phased matroid $\Phi$ over $U_{d}(m)$.
        In particular, 
        \begin{equation}\label{lastgiusta}
         P\in F(\mathcal{B})\cap\mostro. 
        \end{equation}
        By definition of $\mathcal{U}$ we must have
         \begin{equation}\label{lasty}
          P\in\left(F(\mathcal{B})\cap\mostro 
          \right)
          \setminus
          \Theta(\mathcal{Z}_{\mathbb{P}}(U_{d}(m))).
         \end{equation}
         To see this,   assume 
         $P\in\Theta(\mathcal{Z}_{\mathbb{P}}(U_{d}(m)))$. Hence, from point (P3) in Lemma \ref{lastLEMMA}, we have 
         \begin{equation*}
          P\in\Theta(\mathcal{Z}_{\mathbb{P}}(U_{d}(m)))\setminus\Sigma(\mathcal{U})=
                 \Sigma(\mathcal{T})\setminus\Sigma(\mathcal{U})=
                 \Sigma(\mathcal{T}\setminus\mathcal{U}).
         \end{equation*}
         Thus, 
         \begin{equation*}
          P\in\left(F(\mathcal{B})\cap\prod_{j=1}^{n_{M}}X_{\mathbb P}\right)\setminus
               \left(F(\mathcal{B})\cap\prod_{j=1}^{n_{M}}[X_{\mathbb P}\setminus\{(1,1),(1,-1),(-1,1)\}]\right)
         \end{equation*}
         contradicting \eqref{lastgiusta}.
         Now \eqref{lasty} says that $\Phi$ is non-chirotopal (since $P\in \mostro$) and non-realizable ($P\not\in \Theta(\mathcal{Z}_{\mathbb{P}}(U_{d}(m)))$) , so          the claim follows.
         \end{itemize}
\end{proof}

\begin{proof}[Proof of Lemma \ref{lastLEMMA}]$\,$ \\
\emph{Proof of property $(P1)$.}
       If $\mathcal{T}$ is empty there is nothing to say. Let us assume $\mathcal{T}$ non-empty. With \eqref{DefAsso},         condition (C1) is equivalent to 
      \begin{equation}\label{eqC1}
       \begin{array}{ccc}
        \det(A^{i_{1}},\ldots,A^{i_{d}})\neq0 & \text{for all} & 1\leq i_{1}<\cdots<i_{d}\leq m. \\
       \end{array}
      \end{equation}
      Similarly, condition $(C2)$ is equivalent to 
     \begin{equation}\label{eqC2}
       \begin{array}{ccc}
 \det(A^{i_{1}},\ldots,A^{i_{d}})\in\mathbb{C}\setminus\mathbb{R} & \text{for some} & 1\leq i_{1}<\cdots<i_{d}\leq m.\\
       \end{array}
     \end{equation}
     Condition \eqref{eqC1} and \eqref{eqC2} define an open subset of $\mathcal{V}$. 
     
 \noindent \emph{Proof of property $(P2)$.} Let $\mathcal{N}_{\mathbb{P}}^{p}(M)$ as in Definition \ref{TypeBa} and let 
       $\Omega:\mathcal{T}\longrightarrow\mathcal{N}_{\mathbb{P}}^{p}(M)$ be the function $A\mapsto\varphi_{A}$ which 
       maps a matrix $A\in\mathcal{T}$ to the weak phirotope $\varphi_{A}$ associated to $A$ defined as in \eqref{DefAsso}.  Comparing the definitions it is apparent that $\Omega$ is continuous.
       Now, consider the projections 
       \begin{equation*}
        \pi_{\sim_{p}}:\mathcal{N}^{p}_{\mathbb{P}}(U_{d}(m))
                      \longrightarrow\mathcal{M}^{p}_{\mathbb{P}}(U_{d}(m))
       \end{equation*}
       and
       \begin{equation*}
        \pi_{\cong_{p}}:\mathcal{M}^{p}_{\mathbb{P}}(U_{d}(m))
                       \longrightarrow\mathcal{R}^{p}_{\mathbb{P}}(U_{d}(m)).
       \end{equation*}
       From the definition of the topological spaces 
       $\mathcal{M}^{p}_{\mathbb{P}}(U_{d}(m))$ and 
       $\mathcal{R}^{p}_{\mathbb{P}}(U_{d}(m))$
       (compare Definition \ref{TypeBa} and Definition \ref{TypeB}) these projections
       are continuous.
       Thus, $\Lambda$ is continuous since it is 
       composition of the continuous functions $\pi_{\cong_{p}}$, $\pi_{\sim_{p}}$ and $\Omega$.
 
 \noindent \emph{Proof of property $(P3)$.} By definition of $\mathcal{T}$ and $\mathcal{Z}_{\mathbb{P}}(U_{d}(m))$ 
        it follows that 
        $\Lambda(A)$ is an element of $\mathcal{Z}_{\mathbb{P}}(U_{d}(m))$.
        We need to check that
        $\Lambda:\mathcal{T}\longrightarrow\mathcal{Z}_{\mathbb{P}}(U_{d}(m))$ is  bijective.
        
          \noindent \emph{$\Lambda$ is surjective.}
          Let $P\in\mathcal{Z}_{\mathbb{P}}(U_{d}(m))$. Since $P$ is realizable, there exists a matrix 
          $B\in M_{d,m}(\mathbb{C})$ such that $P=\pi_{\cong_p}\circ \pi_{\sim_p}(\varphi_{B})$. 
          It is not hard to see that 
          there exists $A$ of the form \eqref{canonicalform} such that 
          $\pi_{\cong_{p}}\circ\pi_{\sim_{p}}(\varphi_{A})=\pi_{\cong_{p}}\circ\pi_{\sim_{p}}(\varphi_{B})$. 
          From the definition of 
          $\mathcal{Z}_{\mathbb{P}}(U_{d}(m))$ it is not hard to see that $A\in\mathcal{T}$.
          
          \noindent \emph{$\Lambda$ is injective.} 
          Let $A,B\in\mathcal{T}$ such that $\Lambda(A)=\Lambda(B)$. By definition of $\cong_{p}$, there exist
          $a\in S^{1}$ and a function $h:[m]\longrightarrow S^{1}$ such that for $(x_{1},\ldots,x_{d})\in [m]^{d}$ 
          \begin{equation}\label{piromadman}
            \varphi_{A}(x_{1},\ldots,x_{d})=a\left(\prod_{j=1}^{d}h(x_{j})\right)
            \varphi_{B}(x_{1},\ldots,x_{d}).
          \end{equation} 
          Since $A$ and $B$ are of the form \eqref{canonicalform}, we can explicitly compute $\varphi_A$ and $\varphi_B$ on selected arguments using Equation \eqref{DefAsso}. In particular, for $1\leq l \leq d$ we can consider the $d$-tuple $(x^{(l)}_{1},\ldots,x^{(l)}_{d})$  
 defined by setting
 \begin{equation*}
   x^{(l)}_{j}=\left\lbrace
               \begin{array}{ccc}
                  j   & \text{if} & j\neq l; \\
                  d+1 & \text{if} & j=l.     \\
               \end{array}
               \right.
 \end{equation*}
Then, $\varphi_A(x^{(l)}_{1},\ldots,x^{(l)}_{d})=\varphi_B(x^{(l)}_{1},\ldots,x^{(l)}_{d})$ and hence
 \begin{equation*}
  1=a\frac{h(1)\cdots h(d+1)}{h(l)}
 \end{equation*}
 for $1\leq l \leq d$. Therefore $h(1)=h(2)=\cdots=h(d)$.
 
 Moreover, for all $d\leq l \leq m$ the form \eqref{canonicalform} for $A$ and $B$ gives 
 \begin{equation}\label{eq:AB1} \varphi_A(1,2,\ldots,d-1,l)=\varphi_B(1,2,\ldots,d-1,l)=1
 \end{equation} and thus
 $1=ah(1)\cdots h(d-1)h(l)$. In particular, $h(d)=\ldots = h(m)$.
 
 All in all, we have proved that $h$ is constant on $[m]$, say equal to some $h_0\in \mathbb H^*$. Thus, $\varphi_A=ah_0^d \varphi_B$, and evaluating on the $d$-tuple $(1,\ldots,d)$ we see that it must be $ah_0^{d}=1$. So 
          \begin{equation}\label{piromadmanequal}
           \varphi_{A}=\varphi_{B}
          \end{equation}
which, together with the proof of
          \cite[Theorem 5.1]{Rui13} -- where the matrices $A$ and $B$ are reconstructed explicitly from $\varphi_A$ and $\varphi_B$ -- implies  $A=B$.
\end{proof}

\begin{lemma}
 \label{HoloSide}
 Let $h\geq1$ and let $\Omega$ be a non-empty open connected subset of $\mathbb{C}^{h}$. Let $k\geq1$ and let 
 $F:\Omega\longrightarrow\mathbb{C}^{k}$ be a holomorphic map. Assume that none of the components 
 $F_{1},\ldots,F_{k}$ of $F$ is constant. Then, there exists $z_{0}\in\Omega$ such that 
 $F(z_{0})\in\left(\mathbb{C}\setminus\mathbb{R}\right)^{k}$.
\end{lemma}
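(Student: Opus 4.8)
The plan is to prove the statement by induction on the number $k$ of components, using the classical fact that the non-vanishing locus of a non-zero holomorphic function on a connected open set is a dense open subset. The base case $k=1$ is the crux: given a single non-constant holomorphic function $F_1:\Omega\longrightarrow\mathbb C$, I must find $z_0\in\Omega$ with $F_1(z_0)\notin\mathbb R$. Suppose, for contradiction, that $F_1(\Omega)\subseteq\mathbb R$. Then $\operatorname{Im} F_1$ is a real-valued function on the open connected subset $\Omega\subseteq\mathbb C^h$ which is the imaginary part of a holomorphic function, hence pluriharmonic; but a pluriharmonic (in particular, in each variable, harmonic) function that is constant (identically zero) forces, via the Cauchy--Riemann equations, $F_1$ itself to be locally constant, hence constant on the connected set $\Omega$ --- contradicting the hypothesis. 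Alternatively and more simply: restrict $F_1$ to a complex line $\ell$ through any point of $\Omega$ along which $F_1$ is non-constant (such a line exists since $F_1$ is non-constant), obtaining a non-constant holomorphic function of one variable with values in $\mathbb R$, which is impossible by the open mapping theorem. I expect to use this one-variable reduction as it is the cleanest.

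Next I would run the induction. Assume the statement holds for $k-1$ components. By the inductive hypothesis applied to $(F_1,\ldots,F_{k-1})$, the set
\begin{equation*}
 U:=\{z\in\Omega\mid F_i(z)\notin\mathbb R\text{ for }i=1,\ldots,k-1\}
\end{equation*}
is non-empty. Moreover $U$ is open, being the preimage of the open set $(\mathbb C\setminus\mathbb R)^{k-1}$ under the continuous map $(F_1,\ldots,F_{k-1})$. The issue is that $U$ need not be connected, so I cannot directly invoke the inductive/base argument for $F_k$ on $U$. To fix this, I would argue on a connected component $\Omega'$ of $U$: it is a non-empty open connected subset of $\Omega$, and I claim $F_k$ restricted to $\Omega'$ is still non-constant. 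This is the key point: if $F_k$ were constant on the open set $\Omega'$, then by the identity theorem for holomorphic functions on the connected open set $\Omega$ it would be constant on all of $\Omega$, contradicting the hypothesis. Hence $F_k|_{\Omega'}$ is non-constant holomorphic, and by the base case (the $k=1$ argument applied on $\Omega'$) there is $z_0\in\Omega'$ with $F_k(z_0)\notin\mathbb R$; since $z_0\in\Omega'\subseteq U$ we also have $F_i(z_0)\notin\mathbb R$ for $i<k$, so $F(z_0)\in(\mathbb C\setminus\mathbb R)^k$, completing the induction.

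The main obstacle, as indicated, is handling the loss of connectedness when passing to the locus where the first $k-1$ components are non-real; the clean resolution is the identity theorem, which guarantees that non-constancy of a holomorphic function on a connected domain is inherited by its restriction to any non-empty open subset. Everything else --- the one-variable base case via the open mapping theorem, the openness of the relevant preimages, and the straightforward bookkeeping of the induction --- is routine. I would present the argument with the one-variable base case stated first as a short standalone observation, then the inductive step, keeping the write-up to under a page.
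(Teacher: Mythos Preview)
Your proof is correct and follows essentially the same approach as the paper: both argue inductively, use the open mapping theorem to show that a non-constant holomorphic function on a connected domain must attain non-real values, and invoke the identity theorem to ensure that non-constancy persists on the smaller open subset obtained at each step. The only cosmetic difference is that the paper builds a nested chain $\Omega^{(0)}\supseteq\Omega^{(1)}\supseteq\cdots$ of small connected neighborhoods, whereas you pass to a connected component of the full preimage $U$; both devices serve the same purpose of restoring connectedness so the identity theorem applies.
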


\begin{proof}
 Set $\Omega^{(0)}=\Omega$ and let $F_{1}$ be the first component of $F$. Since $\Omega$ 
 is a non-empty open connected 
 subset of $\mathbb{C}^{h}$ and $F_{1}$
 is a holomorphic non-constant function on $\Omega$, 
 the open map theorem implies that there is $z_{0}^{(1)}\in\Omega^{(0)}$ such that 
 $F_{1}(z_{0}^{(1)})\in\mathbb{C}\setminus\mathbb{R}$. 
 By continuity of $F_{1}$ it is possible to find an open connected neighborhood $\Omega^{(1)}$ of $z_{0}^{(1)}$ in 
 $\Omega^{(0)}$ with $F_{1}(\Omega^{(1)})\subseteq\mathbb{C}\setminus\mathbb{R}$.
 
 Now, let $F_{2}$ be the second component of $F$. Since $\Omega$ is connected and $F_{2}$ is non-constant on 
 $\Omega$, again from the open map theorem it follows that $F_{2}$ can not be constant on $\Omega^{(1)}$.
 So that, as above, there exists $z_{0}^{(2)}\in\Omega^{(1)}$ and an open connected neighborhood $\Omega^{(2)}$ 
 of $z_{0}^{(2)}$ in $\Omega^{(1)}$ such that $F_{2}(\Omega^{(2)})\subseteq\mathbb{C}\setminus\mathbb{R}$.
 
 Hence, we can recursively find for each component $F_{j}$ of $F$ a point $z_{0}^{(j)}$ and an open connected 
 neighborhood $\Omega^{(j)}$ of $z_{0}^{(j)}$ in $\Omega^{(j-1)}$ such that 
 $F_{j}(\Omega^{(j)})\subseteq\mathbb{C}\setminus\mathbb{R}$. Set $z_{0}=z_{0}^{(k)}$. By construction 
 $z_{0}\in\Omega^{j}$ for each $j=1,\ldots,k$. Thus, $F_{j}(z_{0})\in\mathbb{C}\setminus\mathbb{R}$ for $j=1,\ldots,k$.
\end{proof}

\bibliographystyle{plain}
\bibliography{Full2}

\end{document}